\newtheorem{thm}{Theorem}[section]
\newtheorem*{thm*}{Theorem} 
\newtheorem{lemma}[thm]{Lemma}
\newtheorem*{lem*}{Lemma}
\newtheorem{cor}[thm]{Corollary}
\newtheorem{prop}[thm]{Proposition}
\theoremstyle{remark}
\newtheorem{rem}[thm]{Remark} 
\newtheorem{ex}[thm]{Example}
\newcommand{\mr}{{\mathbb R}}
\newcommand{\mn}{{\mathbb N}}
\newcommand{\mc}{{\mathbb C}}
\newcommand{\md}{{\mathbb D}}
\newcommand{\mh}{{\mathbb H}}
\newcommand{\ms}{{\mathbb S}}
\renewcommand{\rho}{\varrho}
\newcommand{\eps}{\varepsilon}
\renewcommand{\Im}{\operatorname{Im}}
\renewcommand{\Re}{\operatorname{Re}}
 \newcommand{\dom}{\operatorname{Dom}} 
\newcommand{\dist}{\operatorname{dist}}
\newcommand{\hil}{\mathcal{H}}
\newcommand{\bdd}{\mathcal{B}}
\DeclareSymbolFont{fouriersymbols}{FMS}{futm}{m}{n}
\DeclareSymbolFont{fourierlargesymbols}{FMX}{futm}{m}{n}
\DeclareMathDelimiter{\VERT}{\mathord}{fouriersymbols}{152}{fourierlargesymbols}{147}
\begin{document}

\title[$L_p$-spectrum for Schr\"odinger operators on the hyperbolic plane]{$L_p$-spectrum and Lieb-Thirring inequalities for Schr\"odinger operators on the hyperbolic plane} 

\author[M. Hansmann]{Marcel Hansmann}
\address{Faculty of Mathematics\\ 
Chemnitz University of Technology\\
Chemnitz\\
Germany.}
\email{marcel.hansmann@mathematik.tu-chemnitz.de}

\thanks{This work was funded by the Deutsche Forschungsgemeinschaft (DFG, German Research Foundation) - Project number HA 7732/2-1. I would like to thank Hendrik Vogt for pointing me to the references \cite{MR1673414, MR1744777}.}


\begin{abstract}
This paper deals with the $L_p$-spectrum of Schr\"odinger operators on the hyperbolic plane. We establish Lieb-Thirring type inequalities for discrete eigenvalues and study their dependence on $p$. Some bounds on individual eigenvalues are derived as well.  
\end{abstract}

\maketitle 

\section{Introduction}

The study of spectral properties of non-selfadjoint Schr\"odinger operators $-\Delta+V$ in $L_2(\mr^d)$, with a \emph{complex-valued} potential $V$, has attracted considerable attention in recent years. In particular, many works have been dedicated to the derivation of non-selfadjoint versions of the famous Lieb-Thirring inequalities (first considered by Lieb and Thirring for real-valued potentials in \cite{Lieb75,Lieb76}) and to the problem of finding good upper bounds on individual eigenvalues. Let us mention \cite{MR2260376, MR2559715, MR2540070, MR2596049, Hansmann11, MR3077277, MR3016473, GK15, MR3627408, MR3730931, frank15} as some references for the former topic and \cite{MR1819914, MR2540070, MR2651940, Frank11, FLS11, MR3451537, MR3713021, frank15} as some references for the latter.

While it is natural to study Schr\"odinger operators in the Hilbert space $L_2(\mr^d)$, there also exist good reasons (see e.g. \cite{MR670130}) to consider them in $L_p(\mr^d)$, for $p \neq 2,$ as well. However, from a spectral perspective this isn't interesting at all. Indeed, it has been shown in \cite{MR836002} that under weak assumptions on the potential $V$ the $L_p$-spectra of selfadjoint Schr\"odinger operators coincide. Moreover, later results showed that this is the case in the non-selfadjoint setting as well (see \cite{MR1673414, MR1744777}). Even more is true: the fact that the underlying manifold is $\mr^d$ doesn't play a role either. For instance, it was shown in \cite{sturm} that the $L_p$-spectra of the Laplace-Beltrami operator on a complete Riemannian manifold $M$ with Ricci-curvature bounded from below are $p$-independent, provided the volume of $M$ grows at most sub-exponentially.  

One of the simplest manifolds where the $L_p$-spectrum of the Laplace-Beltrami operator \emph{does} depend on $p$ is given by the hyperbolic plane $\mh$. In the half-space model this manifold is given by
\begin{equation*}
\mh=\{ (y,t) \in \mr^{2} : y \in \mr, 0 < t < \infty\},
\end{equation*}
together with  the conformal metric $ ds^2 =t^{-2} (dy^2+dt^2)$. It has been shown in \cite{MR937635} that the spectrum of  $-\Delta_p$ in $L_p(\mh), 1 \leq p < \infty,$ consists of the parabolic sets 
\begin{equation}
  \label{eq:1}
  \Sigma_p:=\left\{ a+ib : a \geq 1/(pp') \text{ and } b^2 \leq (1-2/p)^2\left(a- {1}/(pp')\right) \right\},
\end{equation}
see Figure \ref{fig:1}. Here $p'$ denotes the conjugate exponent, i.e.
\begin{equation*}
1/p +1/p'=1. 
\end{equation*}
In particular, the spectrum of the selfadjoint operator $-\Delta_2$ consists of the interval $\Sigma_2=[1/4,\infty)$ and in case $p \neq 2$ the spectrum is the set of points on and inside the parabola with vertex $\lambda=1/(pp')$ and focus $\lambda=1/4$. We see that $\Sigma_p=\Sigma_{p'}$, reflecting the fact that (up to a reflection on the real line) the spectra of $-\Delta_p$ and its adjoint $-\Delta_{p'}$ coincide. Moreover, let us remark that, while in case $p=2$ the spectrum is clearly purely essential, it seems to be unknown whether the same is true for $p \neq 2$ as well (we conjecture that it is).

 \begin{figure}[h]
   \centering 
  \includegraphics[width=0.55\textwidth]{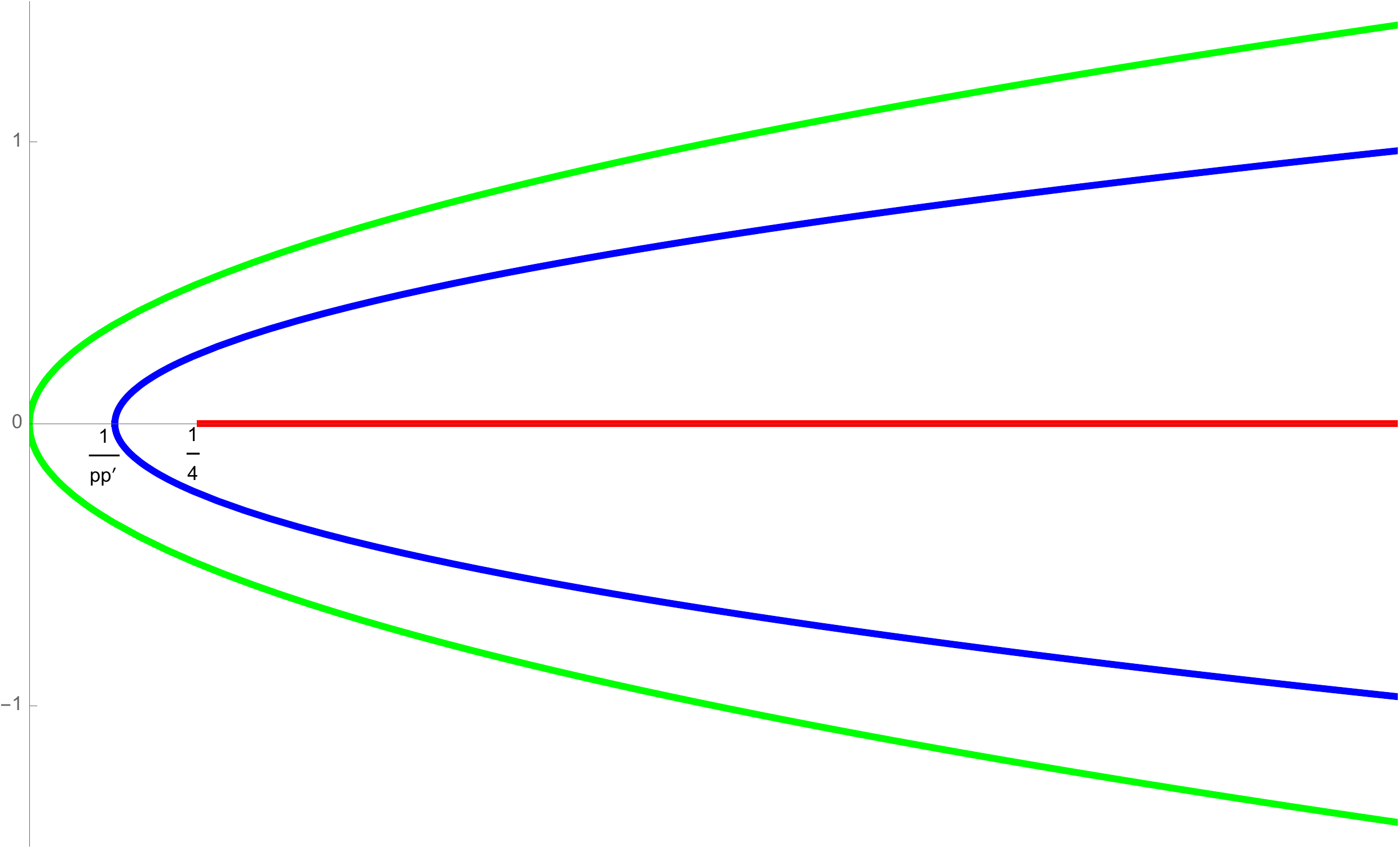}   
\caption{{\small The boundaries of $\Sigma_p=\sigma(-\Delta_p)$ drawn for $p=1$, some general $p \in (1,2)$ and $p=2$, respectively (from outer to inner).}}
\label{fig:1}
 \end{figure}

In the present paper we will study the Schr\"odinger operator 
\begin{equation}
H_p=-\Delta_p+V \quad \text{in} \quad L_p(\mh), \quad 1 < p < \infty,\label{eq:2}
\end{equation}
given the assumption that 
\begin{equation}
V \in L_r(\mh) \quad \text{for some} \quad r \geq \max(p,p').\label{eq:3}
\end{equation}
We will see below that in this case the operator of multiplication by $V$ is $-\Delta_p$-compact and hence the essential spectra of $H_p$ and $-\Delta_p$ coincide. In particular, the topological boundary $\partial \Sigma_p$, not containing any isolated points, belongs to the essential spectrum of both operators. While the essential spectrum is stable, other parts of the spectrum of $-\Delta_p$ will change with the introduction of the perturbation $V$. In particular, the spectrum of $H_p$ in $\Sigma_p^c$ (the resolvent set of $-\Delta_p$) will consist of an at most countable number of discrete eigenvalues, which can accumulate at $\partial \Sigma_p$ only. It is our aim to say more about the speed of this accumulation, and its dependence on $p$, by deriving suitable Lieb-Thirring type inequalities. In addition, we will also provide some estimates on individual eigenvalues. 

As far as we can say, this paper constitutes the first work on such topics in a non-Hilbert space context. Moreover, we think that our results are even new in the Hilbert space case $p=2$, where the only existing articles we are aware of are \cite{MR1454250} and \cite{MR2639180}, respectively. Here \cite{MR1454250} considers the selfadjoint case only and provides bounds on the number of discrete eigenvalues of $-\Delta_2+V$ in hyperbolic space of dimension $d \geq 3$, whereas the abstract results of \cite{MR2639180} also apply to complex-valued potentials and could, in principle, be used to obtain some estimates on the discrete eigenvalues of $H_2$ in the half-plane $\{\lambda \in \mc : \Re(\lambda) < 1/4\}$. In contrast to this, the results we will derive in this paper will provide information on \emph{all} discrete eigenvalues of $H_p$ in $\Sigma_p^c$. 
   
While in the present paper we restrict ourselves to the two-dimensional hyperbolic plane, let us at least mention that in principle we can obtain results for higher dimensional hyperbolic space as well. Indeed, our results rely on the explicit knowledge of the green kernel of $-\Delta_p$, which is available in all dimensions (though it gets more complicated in case $d \geq 4$).

\section{Main results}

This section contains the main results of this paper. We use some standard terminology concerning operators and spectra, which is reviewed in Appendix \ref{ap:oper}.

\subsection{Bounds on eigenvalues} \label{sec:21}

We begin with two results concerning the location of the discrete spectrum $\sigma_d(H_p)$, starting with the case $p=2$.

\begin{thm}\label{thm:1}
  Let $2 \leq r < \infty$ and $V \in L_r(\mh)$. If $\lambda \in \sigma_d(H_2) $, then
  \begin{equation}
    \label{eq:4}
    \dist(\lambda,[1/4, \infty))^{(r-1)} \left( 1+ \frac 1 {2|1/4-\lambda|^{1/2}} \right) \leq 2^{3/2} C_0 \|V\|_r^r,
  \end{equation}
where 
\begin{equation}
  \label{eq:5}
  C_0=\frac{1 + \frac \pi 2 \coth(\frac \pi 2 )}{4\pi} \approx 0.216
\end{equation}
\end{thm}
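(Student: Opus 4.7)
The plan is to combine the Birman--Schwinger principle with a bilinear complex interpolation argument and an explicit $L^2$-norm estimate of the Green kernel $G_\lambda$ of $(-\Delta_2 - \lambda)^{-1}$ on $\mh$.

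First, I write $V = u|V|$ with $|u| = 1$ measurable, and introduce the Birman--Schwinger operator
\[ K(\lambda) := |V|^{1/2}\, u\, (-\Delta_2 - \lambda)^{-1}\, |V|^{1/2}. \]
Under \eqref{eq:3} with $\max(p,p')=2$, $V$ is $-\Delta_2$-compact (so $\sigma_{\mathrm{ess}}(H_2) = [1/4,\infty)$), and the standard non-selfadjoint Birman--Schwinger argument gives $-1 \in \sigma_p(K(\lambda))$ whenever $\lambda \in \sigma_d(H_2)$; in particular $\|K(\lambda)\|_{\mathcal{S}_r}^r \geq 1$.

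Next, I consider the bilinear map $T_\lambda(a,b) := a\, u\, (-\Delta_2 - \lambda)^{-1}\, b$, with $a,b$ acting as multiplication operators, and establish two endpoint Schatten bounds. The operator-norm endpoint $\|T_\lambda(a,b)\|_{B(L^2)} \leq \|a\|_\infty \|b\|_\infty \cdot \dist(\lambda, [1/4,\infty))^{-1}$ follows from the spectral theorem for $-\Delta_2$. The Hilbert--Schmidt endpoint $\|T_\lambda(a,b)\|_{\mathcal{S}_2} \leq \|a\|_4 \|b\|_4 \cdot \|G_\lambda\|_{L^2(\mh)}$ is obtained by combining the Hilbert--Schmidt identity
\[ \|T_\lambda(a,b)\|_{\mathcal{S}_2}^2 = \iint_{\mh \times \mh} |a(x)|^2 |G_\lambda(x,y)|^2 |b(y)|^2 \, dV(x)\, dV(y) \]
with Young's inequality $\|g\ast h\|_{L^2} \leq \|g\|_{L^1} \|h\|_{L^2}$ applied on $\mh$ to $g = |G_\lambda|^2$ (radial, hence bi-$K$-invariant) and $h = |b|^2$. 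Invoking Calder\'on's bilinear complex interpolation at $\theta = (r-2)/r$, with $[L^4, L^\infty]_\theta = L^{2r}$ and $[\mathcal{S}_2, \mathcal{S}_\infty]_\theta = \mathcal{S}_r$, I obtain
\[ \|T_\lambda(a,b)\|_{\mathcal{S}_r} \leq \|a\|_{2r} \|b\|_{2r} \cdot \|G_\lambda\|_{L^2(\mh)}^{2/r} \cdot \dist(\lambda,[1/4,\infty))^{-(r-2)/r}. \]
Specialising to $a = b = |V|^{1/2}$ (so $\|a\|_{L^{2r}} = \|V\|_r^{1/2}$), taking $r$-th powers, and combining with $\|K(\lambda)\|_{\mathcal{S}_r}^r \geq 1$ yields
\[ \dist(\lambda,[1/4,\infty))^{r-2} \leq \|V\|_r^r \cdot \|G_\lambda\|_{L^2(\mh)}^2. \]

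Finally, I estimate $\|G_\lambda\|_{L^2(\mh)}^2$ using the spherical Fourier transform on $\mh$, which diagonalises $-\Delta_2$ on radial functions with spectrum $\lambda_\xi = 1/4 + \xi^2$ and Plancherel measure $d\mu(\xi) = (2\pi)^{-1} \xi \tanh(\pi\xi) \, d\xi$ on $[0,\infty)$. Plancherel gives
\[ \|G_\lambda\|_{L^2(\mh)}^2 = \frac{1}{2\pi} \int_0^\infty \frac{\xi \tanh(\pi\xi)}{|\xi^2 + (1/4 - \lambda)|^2} \, d\xi, \]
which I handle by writing $\tanh(\pi\xi) = 1 - 2/(e^{2\pi\xi}+1)$: the resulting ``$1$''-integral evaluates in closed form and is bounded by $(2\dist(\lambda,[1/4,\infty)) \cdot |1/4 - \lambda|^{1/2})^{-1}$, while the exponentially decaying correction contributes a factor $\coth(\pi/2)$ via the residue at the first pole $\xi = i/2$ of $\tanh(\pi\xi)$. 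Together these give
\[ \|G_\lambda\|_{L^2(\mh)}^2 \leq \frac{2^{3/2}\, C_0}{\dist(\lambda,[1/4,\infty))\,\bigl(1 + (2|1/4-\lambda|^{1/2})^{-1}\bigr)}, \]
with $C_0$ as in \eqref{eq:5}, and plugging this into the previous display yields \eqref{eq:4}.

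The main obstacle is this last step: establishing the Green-kernel estimate with the precise constant $C_0$ and correction factor $1 + (2|1/4-\lambda|^{1/2})^{-1}$ uniformly for all $\lambda \in \mc \setminus [1/4,\infty)$, including the regime $\Re\lambda > 1/4$ where the integrand has poles near $\xi^2 = \lambda - 1/4$ requiring contour deformation. Extracting the specific constant $\frac{\pi}{2}\coth(\pi/2)$ entering $C_0$ from the residue structure of $\tanh(\pi\xi)$ is the delicate part of the computation.
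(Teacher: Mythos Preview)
Your overall strategy coincides with the paper's: Birman--Schwinger combined with complex interpolation between the trivial $\mathcal S_\infty$ endpoint and a Hilbert--Schmidt endpoint governed by $\sup_x\|G_\lambda(x,\cdot)\|_{L^2(\mh)}$, the latter being the key technical input. The cosmetic differences---symmetrized versus one-sided Birman--Schwinger operator, bilinear Calder\'on interpolation versus Stein interpolation on the analytic family $V^{rw/2}(-\Delta_2-\lambda)^{-1}$---are immaterial.

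The gap is in your Green-kernel estimate. The split $\tanh(\pi\xi)=1-2/(e^{2\pi\xi}+1)$ does not yield the bound you claim: for real $\lambda<1/4$ and $w:=1/4-\lambda>0$, the ``$1$''-integral equals exactly $\tfrac{1}{4\pi w}$, whereas your asserted bound $(2\,\dist(\lambda,[1/4,\infty))\,|w|^{1/2})^{-1}=(2w^{3/2})^{-1}$ is strictly smaller whenever $w<4\pi^2$. Since the subtracted correction term is nonnegative, it only \emph{decreases} the integral and cannot repair this. The underlying issue is that near $\xi=0$ one has $\xi\tanh(\pi\xi)\sim\pi\xi^2$, and it is precisely this extra vanishing that produces the factor $1+(2|1/4-\lambda|^{1/2})^{-1}$ as $\lambda\to 1/4$; splitting off the constant $1$ destroys this structure rather than isolating it, so the residue heuristic you describe cannot recover the correct small-$w$ behaviour.

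The paper bypasses this by quoting Elstrodt's exact identity $\sup_x\|G_\lambda(x,\cdot)\|_2^2=|\Im\psi(s)|/(2\pi|\Im\lambda|)$ (with the analogous $\psi'$ formula on the real axis), where $s=1/2+\sqrt{1/4-\lambda}$ and $\psi$ is the digamma function. One then bounds $|\Im\psi(s)|\le |\Im s|\sum_{k\ge 0}(k^2+|s|^2)^{-1}$, uses $\sum_{k\ge 0}(k^2+a^2)^{-1}=\tfrac{1}{2a^2}+\tfrac{\pi\coth(\pi a)}{2a}$ and $|s|\ge 1/2$, and this is exactly where $C_0=(1+\tfrac{\pi}{2}\coth(\tfrac{\pi}{2}))/(4\pi)$ emerges. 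The resulting bound $\|G_\lambda(x,\cdot)\|_2^2\le C_0/(|z+1/2|\Re z)$ with $z=\sqrt{1/4-\lambda}$, combined with $|z+1/2|\ge 2^{-1/2}(|z|+1/2)$ and $|z|\Re z\ge\tfrac12\dist(\lambda,[1/4,\infty))$, gives the inequality you stated as your target.
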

In particular, we see that (\ref{eq:4}) implies that the distance of the discrete eigenvalues to the essential spectrum is bounded above, i.e. for $\lambda \in \sigma_d(H_2)$ we have
\begin{equation*}
       \dist(\lambda,[1/ 4, \infty)) \leq 2^{3/(2(r-1))} C_0^{1/(r-1)} \|V\|_r^{r/(r-1)}.
\end{equation*} 
\begin{rem}
Given the same assumptions on $V$, for the Schr\"odinger operator $-\Delta+V$ in $L_2(\mr^2)$ it is even known that the imaginary part of a discrete eigenvalue needs to be small if its real part is large,  see \cite{frank15}. Whether a similar statement remains true on the hyperpolic plane is an open question.
\end{rem} 
In case $p\neq 2$, the result we obtain is more complicated. For its statement, it is convenient to introduce $\gamma_p \in [0,1/2]$ by setting
\begin{equation}
  \label{eq:7}
  \gamma_p:= \frac 1 2 \left|1-\frac 2 p\right|.
\end{equation}
A short computation shows that $\gamma_p^2=1/4-1/(pp')$, which is the focal length of the parabola $\Sigma_p=\sigma(-\Delta_p)$ (the distance between focus and vertex). In particular, we see that $\gamma_2=0$ and $\gamma_p=\gamma_{p'}$. 

\begin{thm}\label{thm:2}
Let $2 < \max(p,p') \leq r < \infty$ and suppose that $V \in L_r(\mh)$. If $\lambda \in \sigma_d(H_p) \cap \Sigma_p^c$, then
  \begin{equation}
      \label{eq:6}
\left( \frac{\dist(\lambda,\Sigma_p)}{|1/4-\lambda|^{1/2}}\right)^{2r-2}\left( 1 + \frac{|1/4-\lambda|^{1/2}} {8 \dist(\lambda,\Sigma_p)} \right)^{2r\gamma_p+1} \leq 16^{2r-2} C_0 \|V\|_r^r,
  \end{equation}
where $C_0$ is as defined in (\ref{eq:5}).
\end{thm}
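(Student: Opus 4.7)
The plan is to apply the Birman--Schwinger principle in the non-self-adjoint $L_p$ setting. Let $\lambda \in \sigma_d(H_p)\cap\Sigma_p^c$ and choose a suitable factorization $V=V_1V_2$ (for instance $|V_1|=|V|^{1/p'}$ and $|V_2|=|V|^{1/p}$, with phases adjusted so that $V_1V_2=V$). Under assumption~\eqref{eq:3} the operator of multiplication by $V$ is $-\Delta_p$-compact, and a standard argument (analogous to the self-adjoint case) shows that the Birman--Schwinger operator
\begin{equation*}
K(\lambda) := V_2\,(-\Delta_p-\lambda)^{-1}\,V_1
\end{equation*}
is well defined and bounded on $L_p(\mh)$ (legitimated by the kernel estimates below), and that $-1$ is an eigenvalue of $K(\lambda)$. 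Consequently $\|K(\lambda)\|_{\bdd(L_p)}\ge 1$, and it suffices to bound this operator norm from above.

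The kernel of $K(\lambda)$ is $V_2(x)\,G_\lambda(x,y)\,V_1(y)$, where $G_\lambda(x,y)=g_\lambda(d(x,y))$ is the explicit radial Green kernel of $-\Delta_p-\lambda$ on $\mh$. Its behaviour is logarithmic near the diagonal and of order $\rho^{-1/2}e^{-(1/2+\Re\mu)\rho}$ at infinity, with $\mu=\mu(\lambda):=\sqrt{1/4-\lambda}$ (principal branch). Decomposing $K(\lambda)=V_2\cdot R_0(\lambda)\cdot V_1$, one obtains an $L_p\to L_p$ norm bound by combining H\"older's inequality (with exponents dictated by the hypothesis $r\ge\max(p,p')$) with a Schur-type estimate exploiting the fact that $g_\lambda$ is a radial convolution kernel on the symmetric space $\mh$. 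This yields an inequality of the form
\begin{equation*}
\|K(\lambda)\|_{\bdd(L_p)}^{\,r} \;\le\; \|V\|_r^{\,r}\cdot\mathcal{I}(\lambda),
\end{equation*}
where $\mathcal{I}(\lambda)$ is a radial integral of $|g_\lambda|$ raised to an appropriate power against the hyperbolic volume element $2\pi\sinh(\rho)\,d\rho$. Combined with $\|K(\lambda)\|_{\bdd(L_p)}\ge 1$ this gives $\mathcal{I}(\lambda)\ge\|V\|_r^{-r}$, which is the starting point for~\eqref{eq:6}.

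Two further ingredients convert this into the stated form. First is the geometric identity linking $\mu(\lambda)$ to the parabola $\Sigma_p$: a short computation from the defining inequality of $\Sigma_p$ shows that $\lambda\in\Sigma_p^c$ iff $\Re\mu(\lambda)>\gamma_p$, and moreover
\begin{equation*}
\Re\mu(\lambda)-\gamma_p \;\asymp\; \frac{\dist(\lambda,\Sigma_p)}{2|1/4-\lambda|^{1/2}},
\end{equation*}
which is what turns the exponential tail rate into the quantity $\dist(\lambda,\Sigma_p)/|1/4-\lambda|^{1/2}$ appearing in~\eqref{eq:6}. Second, the radial integral $\mathcal{I}(\lambda)$ must be computed carefully. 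In the tail region the integrand decays like $e^{-(r'(1/2+\Re\mu)-1)\rho}$ (after accounting for the volume growth $\sinh\rho\sim\tfrac12 e^\rho$), and a change of variable normalizing the exponential produces a contribution of order $(|1/4-\lambda|^{1/2}/\dist(\lambda,\Sigma_p))^{2r-2}$ in $\mathcal{I}(\lambda)$, which upon rearrangement yields the first factor of~\eqref{eq:6}; the auxiliary factor $(1+|1/4-\lambda|^{1/2}/(8\dist(\lambda,\Sigma_p)))^{2r\gamma_p+1}$ records the mismatch between the true decay rate $1/2+\Re\mu$ and the $L_p$-borderline $1/2+\gamma_p$, whose gap is exactly $\Re\mu-\gamma_p$.

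The main obstacle I expect is not the structural shape of the bound but the \emph{sharp} constant, particularly the value of $C_0$. Obtaining $C_0=(1+(\pi/2)\coth(\pi/2))/(4\pi)$ requires a non-wasteful treatment of the near-diagonal contribution from the logarithmic singularity of $g_\lambda$; the appearance of $\coth(\pi/2)$ strongly suggests that $C_0$ comes from an explicit integral evaluated via contour or residue calculus, and the fact that the same constant $C_0$ governs the companion Theorem~\ref{thm:1} makes it natural to expect it to be inherited from the near-diagonal analysis carried out there. Matching all the pieces---the H\"older/Schur constants, the near-diagonal contribution producing $C_0$, the tail contribution producing $(\dist(\lambda,\Sigma_p)/|1/4-\lambda|^{1/2})^{2r-2}$, and the correction factor $(1+\cdot)^{2r\gamma_p+1}$---so as to reproduce~\eqref{eq:6} exactly is where the bulk of the careful bookkeeping will lie.
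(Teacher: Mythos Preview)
Your proposal outlines a direct kernel-estimate approach (factorize $V$, bound the Birman--Schwinger operator by a Schur/H\"older argument on the explicit Green kernel, split into near-diagonal and tail contributions). This is \emph{not} the route taken in the paper, and the differences are substantial enough to be worth recording.

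The paper does not factorize $V$ at all: it uses the one-sided Birman--Schwinger operator $V(-\Delta_p-\lambda)^{-1}$ on $L_p(\mh)$ and bounds its $(r,p)$-summing norm (not its operator norm). The summing-norm bound (Theorem~\ref{thm:6}) is itself obtained by \emph{complex interpolation}: first one proves an $L_\infty$-potential bound on the operator norm (Lemma~\ref{lem:4}) and an $L_p$-potential bound on the $p$-summing norm (Lemma~\ref{lem:5}), and then one interpolates via the Pietsch--Triebel theorem $[\bdd,\Pi_p]_\theta\subset\Pi_{r,p}$. The constant $C_0$ is not extracted from a near-diagonal analysis of the logarithmic singularity as you suggest; it comes from Elstrodt's sharp $L_2$ bound on the Green function, via an estimate on the digamma function (this is where $\coth(\pi/2)$ enters). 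Finally, the translation from the half-plane variable $z=\Psi_p^{-1}(\lambda)$ back to $\lambda$ uses the Koebe-distortion estimate of Lemma~\ref{lem:41}, which is the precise version of the relation $\Re\mu-\gamma_p\asymp\dist(\lambda,\Sigma_p)/|1/4-\lambda|^{1/2}$ you identified.

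Your direct approach is plausible in outline but has two soft spots. First, the factorization $V=V_1V_2$ with $K(\lambda)$ acting on $L_p$ is the $L_2$ trick; on $L_p$ it is not needed (the one-sided operator already lives on $L_p$) and introduces extra bookkeeping. Second, obtaining the \emph{exact} exponents $2r-2$ and $2r\gamma_p+1$ together with the constant $C_0$ by a tail/near-diagonal split would require delicate matching; in the paper these exponents fall out mechanically from the two interpolation parameters, and $C_0$ is inherited verbatim from Elstrodt's bound. So while your plan could in principle lead to a bound of the right shape, the paper's interpolation machinery is what makes the stated inequality with its precise constants accessible.
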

Since $\gamma_p=\gamma_{p'}$ and $\Sigma_p=\Sigma_{p'}$ we see that Theorem \ref{thm:2} provides the same bounds for the eigenvalues of $H_p$ and $H_{p'}$, respectively. This  is not a coincidence but follows from the fact that $H_p^*=H_{p'}$ and hence (up to a reflection on the real line) the spectra and discrete spectra of $H_p$ and $H_{p'}$ coincide. This will be proved in Proposition \ref{prop:1} below.  The same phenomenon will be observed in other results of this paper.  
\begin{rem}
  We note that the term $|\lambda - 1/4|$ in (\ref{eq:6}) does not play the same role as in (\ref{eq:4}), since in case $p\neq 2$ the point $1/4$ is in the interior of the spectrum. 
\end{rem}
While (\ref{eq:6}) puts some restrictions on the location of the discrete eigenvalues, we emphasize that in contrast to the case $p=2$, in case $p\neq 2$ the bound (\ref{eq:6}) does not imply that $\dist(\lambda,\Sigma_p)$ is bounded above for $\lambda \in \sigma_d(H_p) \cap \Sigma_p^c$. We do not know whether this reflects a real difference between the two cases, or whether it is just an artefact of our method of proof.

\subsection{Lieb-Thirring inequalities}  \label{sec:22}

We now consider the speed of accumulation of discrete eigenvalues, again starting with the Hilbert space case $p=2$. In the following estimate we distinguish between discrete eigenvalues lying in a disk around $1/4$ (with radius depending on $V$) and eigenvalues lying outside this disk.

\begin{thm}[$p=2$]\label{thm:3}
Let $2 \leq r < \infty$ and $V \in L_r(\mh)$ . Let $(\lambda_j)$ denote an enumeration of the discrete eigenvalues of $H_2$, each eigenvalue being counted according to its algebraic multiplicity. Then for every $\tau \in (0,1)$ there exist constants
$C$ and $C'$, both depending on $\tau$ and $r$, such that the following holds:
\begin{enumerate}
    \item[(ia)] If $2 \leq r \leq 3-\tau$, then
$$\sum_{|1/4-\lambda_j|^{r-3/2} \leq (2\|V\|_r)^{r}} \left( \frac{\dist(\lambda_j,[1/4,\infty))}{|1/4 -\lambda_j|^{1/2}}\right)^{r+\tau} \leq C\cdot \|V\|_r^{\frac r {2r-3}(r+\tau)}.$$
\item[(ib)] If $r>3-\tau$, then
$$\sum_{|1/4-\lambda_j|^{r-3/2} \leq (2\|V\|_r)^{r}} \frac{\dist(\lambda_j,[1/4,\infty))^{r+\tau}}{|1/4 -\lambda_j|^{3/2}} \leq C\cdot \|V\|_r^{\frac r {2r-3}(2r-3+2\tau)}.$$
\item[(ii)] \begin{eqnarray*}
\sum_{|1/4-\lambda_j|^{r-3/2} >(2\|V\|_r)^{r} } \frac{\dist(\lambda_j,[1/4,\infty))^{r+\tau}}{|1/4 -\lambda_j|^{\frac{3+3\tau} 2}}
\leq C' \cdot  \|V\|_r^{\frac r {2r-3}(2r-3-\tau)}.
\end{eqnarray*}
\end{enumerate}
\end{thm}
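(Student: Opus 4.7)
The plan is to follow the now-standard complex-analytic route to Lieb-Thirring bounds for non-selfadjoint Schr\"odinger operators, adapted to the hyperbolic geometry. I would begin by factoring $V = AB$ with $B = |V|^{1/2}$ and $A = \operatorname{sgn}(V)|V|^{1/2}$, both in $L_{2r}(\mh)$, and invoking the Birman-Schwinger principle: $\lambda \in \sigma_d(H_2) \cap \Sigma_2^c$ if and only if $-1$ is an eigenvalue of the compact Birman-Schwinger operator $K(\lambda) = B(-\Delta_2-\lambda)^{-1}A$, with algebraic multiplicities preserved. Using the explicit Green kernel of $-\Delta_2$ on $\mh$ (precisely the estimate that enters the proof of Theorem \ref{thm:1}), one establishes a Schatten--von Neumann bound of the form $\|K(\lambda)\|_{\mathcal{S}_r}^r \leq c \, \phi(\lambda) \|V\|_r^r$, where $\phi(\lambda)$ behaves like $\dist(\lambda,[1/4,\infty))^{-1}(1 + |1/4-\lambda|^{-1/2})$, reflecting the pointwise behaviour of the resolvent kernel near the spectrum.

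Next I would introduce the regularized perturbation determinant $d(\lambda) = \det_{\lceil r\rceil}(I + K(\lambda))$, which is holomorphic on $\Sigma_2^c = \mc\setminus [1/4,\infty)$ and whose zeros coincide, counting algebraic multiplicity, with $\sigma_d(H_2)\cap \Sigma_2^c$. Combined with the standard estimate $|\det_n(I+T)| \leq \exp(\Gamma_n \|T\|_{\mathcal{S}_n}^n)$, the Schatten bound above yields an upper estimate on $\log|d(\lambda)|$ in terms of $\phi(\lambda)^r$ and $\|V\|_r^r$.

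The decisive and most delicate step is to transfer this growth bound into information on the zeros of $d$. I would pull back to the unit disk via a conformal map $\Phi:\md\to\Sigma_2^c$ sending one distinguished boundary point to $1/4$ and the other to $\infty$, and apply a quantitative Blaschke-type theorem in the style of Borichev-Golinskii-Kupin for holomorphic functions in $\md$ with boundary singularities at two prescribed points. The two singularities of $\phi\circ\Phi$ generate two different regimes for the zeros: those near the vertex $1/4$ and those near infinity. The splitting in the statement between the region $|1/4-\lambda|^{r-3/2}\leq (2\|V\|_r)^r$ and its complement is exactly the dichotomy between these two regimes, with the threshold chosen so that the contributions from the two boundary singularities balance. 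Within that region, the further subcase split at $r = 3-\tau$ between (ia) and (ib) reflects whether the $\dist(\lambda,[1/4,\infty))^{-1}$ factor or the $|1/4-\lambda|^{-1/2}$ factor in $\phi$ dominates after pullback, which in turn fixes the exponent of $|1/4-\lambda|$ appearing in the denominator of the sum.

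The main obstacle I foresee is the careful bookkeeping of exponents through the conformal change of variables and the application of the BGK theorem, so that the loss parameter $\tau>0$ appears only in the prescribed places and both boundary points contribute the correct powers. The supporting ingredients --- the Schatten estimates for $K(\lambda)$, the identification of algebraic multiplicities through $\det_{\lceil r\rceil}$, and the passage from the perturbation determinant to the eigenvalue sum --- are, by contrast, routine once the right quantitative tool on $\md$ is in hand.
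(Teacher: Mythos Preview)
Your strategy is the paper's: a Schatten-$\mathcal{S}_r$ bound on a Birman--Schwinger operator, a regularized determinant whose zeros are the discrete eigenvalues, a conformal pullback to $\md$, and the Borichev--Golinskii--Kupin theorem. The paper uses the one-sided operator $V(-\Delta_2-\lambda)^{-1}$ rather than your sandwiched version, and routes everything through an abstract result (Theorem~\ref{thm:61}), but for $p=2$ this is cosmetic. Two points, however, are not the routine bookkeeping you suggest. First, the Schatten bound actually used (Corollary~\ref{cor:est}) is, in the half-plane coordinate $z=\sqrt{1/4-\lambda}$,
\[
\|V(-\Delta_2-\Psi_2(z))^{-1}\|_{\mathcal{S}_r}\ \le\ \|V\|_r\,\Re(z)^{-(1-1/r)}\,|z|^{-(1-2/r)},
\]
and it is precisely the separation into a $\Re(z)$-power (which becomes the $(1-|w|)$-singularity on $\md$) and a $|z|$-power (which becomes the $|1-w|$-singularity at the preimage of $\lambda=1/4$) that feeds into BGK. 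Your stated $\phi(\lambda)\sim\dist(\lambda,[1/4,\infty))^{-1}(1+|1/4-\lambda|^{-1/2})$ does not have this structure and would not produce the split at $r=3-\tau$, which in the paper arises from $(r\beta-1+\tau)_+=(r-3+\tau)_+$ with $\beta=1-2/r$.

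Second, and this is the genuine gap: a \emph{single} conformal map $\Phi:\md\to\Sigma_2^c$ does not yield part~(ii) with the stated denominator $|1/4-\lambda|^{(3+3\tau)/2}$. The paper uses a one-parameter family $\Phi_a(w)=a(1-w)/(1+w)$ from $\md$ to $\mc_+$, applies BGK for every admissible $a\ge(2\|V\|_r)^{r/(2r-3)}$, then multiplies the resulting inequality by an $a$-dependent weight and \emph{integrates over $a$} from that threshold to $\infty$ (see Case~(ii) in the proof of Theorem~\ref{thm:61}). This integration is what collapses the $a$-dependence and produces the correct decay exponent at infinity; a fixed map gives only a denominator of order $|1/4-\lambda|^{3(r+\tau)/2}$, far weaker than what is claimed. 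So the step you flag as ``careful bookkeeping'' in fact requires an additional idea.
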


\begin{rem}
The previous theorem has consequences for sequences $(E_j)$ of discrete eigenvalues converging to some $E \in [1/4,\infty)$. For instance, 
\begin{enumerate}
    \item[-] if $E > 1/4$, then $(\Im E_j) \in l_{r+\tau}$,
    \item[-] if $E=1/4$ and $\Re(E_j) \leq 1/4$, then  $(|E_j-1/4|) \in l_q,$ where
  \begin{equation}
    \label{eq:56}
q = \left\{
  \begin{array}{cl}
    (r+\tau)/2, & \text{ if } 2 \leq r \leq 3-\tau, \\
    r-3/2+\tau, & \text{ if } r > 3-\tau.
  \end{array}\right.
  \end{equation}
\end{enumerate}
In particular, concerning sequences of eigenvalues converging to the bottom of the essential spectrum we obtain different results for $r<3$ and $r > 3$, respectively. Whether this reflects a real difference between the two cases is another interesting open question.
\end{rem}

For the next result in case $p \neq 2$ we again recall that $1/(pp')$ is the vertex of $\Sigma_p$.

\begin{thm}[$p \neq 2$]\label{thm:4}
Let $2 < \max(p,p') \leq r < \infty$ and $V \in L_r(\mh)$ . Let $(\lambda_j)$ denote an enumeration of the discrete eigenvalues of $H_p$ in $\Sigma_p^c$, each eigenvalue being counted according to its algebraic multiplicity. Moreover, set
$$ k:=k(r,p):=r(2-2\gamma_p)-2 \quad (\in (2,\infty)),$$
where $\gamma_p$ is as defined in (\ref{eq:7}). Then for every $\tau \in (0,1)$ there exist $0< \eps_1,\eps_2,\eps_3< 4\tau$ and constants $C,C'$, depending on $\tau, r$ and $p$, such that the following holds:
\begin{enumerate} {\small
      \item[(i)]
\begin{eqnarray*}
\sum_{|\frac 1 {pp'}-\lambda_j|^{(k-1)/2} \leq (2\|V\|_r)^{{r}}}  \dist(\lambda_j,\Sigma_p)^{k+\eps_1} 
\leq C \|V\|_r^r \left(\|V\|_r^{\frac r {k-1}}+\gamma_p\right)^{k+1+\eps_2}.
\end{eqnarray*}
}
\item[(ii)] {\small
\begin{eqnarray*} 
  \sum_{|\frac 1 {pp'}-\lambda_j|^{(k-1)/2} > (2\|V\|_r)^{r}}  \frac{\dist(\lambda,\Sigma_p)^{k+\eps_1} }{(|\lambda-\frac 1 {pp'}|^{1/2}+2\gamma_p)^{k+1+\eps_3}} \leq C' \|V\|_r^r \left(\|V\|_r^{\frac r {k-1}}+\gamma_p\right)^{-\tau}.
\end{eqnarray*}
} 
\end{enumerate}
\end{thm}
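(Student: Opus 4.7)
The plan is to apply the Borichev--Golinskii--Kupin (BGK) framework, in the refinement due to Hansmann, to the parabolic spectrum $\Sigma_p$ of $-\Delta_p$. First I would factor $V = W U$ with $W = \operatorname{sgn}(V)|V|^{1/2}$ and $U = |V|^{1/2}$, both in $L_{2r}(\mh)$, and build a regularized perturbation determinant
\begin{equation*}
  d(\lambda) := {\det}_n\bigl(I + W (-\Delta_p - \lambda)^{-1} U\bigr), \qquad \lambda \in \Sigma_p^c,
\end{equation*}
realized on $L_2(\mh)$ by interpolation between $L_p$ and $L_{p'}$ (using the consistency of $-\Delta_p$ and $-\Delta_{p'}$), with $n$ chosen so that the bracketed operator lies in the Schatten class $\mathcal{S}_n$. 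By a standard argument the zeros of $d$ in $\Sigma_p^c$, counted with algebraic multiplicity, are precisely the eigenvalues $\lambda_j$ appearing in the theorem.

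Next I would establish a Schatten-norm bound
\begin{equation*}
 \bigl\| W (-\Delta_p - \lambda)^{-1} U \bigr\|_{\mathcal{S}_n}^n \leq C \|V\|_r^r\, g(\lambda),
\end{equation*}
where $g(\lambda)$ blows up like a power of $\dist(\lambda, \Sigma_p)^{-1}$ as $\lambda \to \partial \Sigma_p$ and grows polynomially in $|\lambda - 1/(pp')|^{1/2} + \gamma_p$ at infinity. The relevant exponents are read off from the explicit hyperbolic Green kernel of $-\Delta_p$ (the same ingredient that is presumably behind Theorem \ref{thm:2}) combined with the $L_r$-hypothesis on $V$, and they combine to give the exponent $k = r(2 - 2\gamma_p) - 2$ appearing in the theorem. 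Standard inequalities for the modified determinant then control $\log|d(\lambda)|$ by the right-hand side.

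Third, I would conformally transplant the resulting bound to the complement of the unit disk. The square-root substitution $\mu = (\lambda - 1/(pp'))^{1/2}$ sends $\Sigma_p^c$ onto the complement of the segment $[-i\gamma_p, i\gamma_p]$, and a further Joukowski map yields $\md^c$. Pulling $d$ back to $\md^c$ and applying the Hansmann-type BGK estimate for analytic functions with controlled boundary blow-up produces a weighted Blaschke-type sum over the images of the $\lambda_j$. Transferring this back to the original spectral coordinate, and splitting the sum according to whether $|\lambda_j - 1/(pp')|^{(k-1)/2}$ lies below or above $(2\|V\|_r)^r$ -- the natural threshold dictated by the individual-eigenvalue bound of Theorem \ref{thm:2} -- yields the two inequalities (i) and (ii); the small parameters $\eps_1, \eps_2, \eps_3 \in (0, 4\tau)$ absorb the logarithmic losses intrinsic to the BGK machinery.

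The main obstacle will be the sharp Schatten estimate in Step 2. The hyperbolic Green kernel has non-trivial short- and long-range behavior that depends delicately on the hyperbolic distance and on the location of $\lambda$ relative to the parabola, so producing a Schatten bound whose $\lambda$-dependence matches the BGK-predicted exponents -- including the correct dependence on the aperture $\gamma_p$ of $\Sigma_p$ -- requires careful interpolation between different integrability regimes in the $y$- and $t$-variables of the half-plane model. A secondary difficulty is matching the behavior of the conformal map near the vertex (where it acts like a square root) with its behavior at infinity (where it is essentially linear); this mismatch is ultimately what forces the two-case splitting in Theorem \ref{thm:4} rather than a single uniform inequality.
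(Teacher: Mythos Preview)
Your overall strategy (perturbation determinant plus Borichev--Golinskii--Kupin) is the right one, but the concrete implementation has a genuine gap.

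\textbf{The determinant does not see the $L_p$-eigenvalues.} You factor $V=WU$ with $W,U\in L_{2r}$ and build ${\det}_n(I+W(-\Delta-\lambda)^{-1}U)$ ``realized on $L_2(\mh)$''. But the zeros of this determinant are the discrete eigenvalues of $H_2=-\Delta_2+V$ in $\Sigma_2^c$, not those of $H_p$ in $\Sigma_p^c$. The entire point of Theorem~\ref{thm:4} is that for $p\neq 2$ the $L_p$-spectrum is genuinely different from the $L_2$-spectrum (indeed $\Sigma_p\supsetneq\Sigma_2$), and there is no a~priori reason why an eigenfunction of $H_p$ in $L_p$ should lie in $L_2$, or conversely. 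Your remark about ``interpolation between $L_p$ and $L_{p'}$'' does not fix this: for $p>2$ the multiplication by $|V|^{1/2}\in L_{2r}$ is not a bounded map $L_2\to L_p$, so the factored Birman--Schwinger operator does not live naturally on $L_p$ either. In short, a Hilbert-space Schatten determinant cannot be used here; one needs a perturbation determinant \emph{on} $L_p$.

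The paper resolves this by working directly in $L_p(\mh)$ and replacing the Schatten ideals by the $(r,p)$-summing ideals $\Pi_{r,p}(L_p)$, which for $r\geq p>2$ form an $l_R$-ideal (with $R=r$ if $r=p$ and $R=r+\eps$ otherwise). The Banach-space perturbation determinant of \cite{Hansmann15} is then built from the unfactored operator $V(-\Delta_p-\lambda)^{-1}\in\Pi_{r,p}$, whose ideal norm is estimated via complex interpolation between $\|\cdot\|_{p,p}$ (from the $L_1$-bound on the Green kernel) and $\|\cdot\|_{\Pi_p}$ (from its $L_{p'}$-bound). This gives, for $\lambda=\Psi_p(z)$ with $z\in\mc_+$, a bound $\|V(-\Delta_p-\lambda)^{-1}\|_{\Pi_{r,p}}\leq C\|V\|_r\,\Re(z)^{-(1+2/p-3/r)}$, and the exponent $k=r(1+2/p)-2=r(2-2\gamma_p)-2$ then drops out of the BGK machinery exactly as stated.

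A smaller issue: your conformal map is not correct. The substitution $\mu=(\lambda-1/(pp'))^{1/2}$ does \emph{not} send $\Sigma_p^c$ to the complement of a segment; the right uniformizer centers at the focus $1/4$, namely $z=\Psi_p^{-1}(\lambda)=-\gamma_p+\sqrt{1/4-\lambda}$, which maps $\Sigma_p^c$ conformally onto the half-plane $\mc_+$. The paper then composes with a M\"obius map $\Phi_a:\md\to\mc_+$ to get the disk and applies BGK there.
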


\begin{rem}
We see that in contrast to the case $p=2$ (where the parabola $\Sigma_p$ degenerates to an interval) here we obtain the same information on all sequences of eigenvalues, independent of the fact whether they converge to the vertex $1/(pp')$ or to a generic point of the boundary of $\Sigma_p$. Still, also here differentiating between 'small' and 'large' eigenvalues has its value, since the estimate in (ii) also provides information on sequences $(E_j)$ of eigenvalues diverging to $\infty$.
\end{rem}

To see how the above estimates depend on $p$, let us assume that $V \in L_r( \mh)$ for some fixed $r > 2$ and let (without restriction) $2<p\leq r$. Suppose that $(E_j)$ is a sequence of discrete eigenvalues of $H_p=-\Delta_p+V$ converging to some $E \in \partial \Sigma_p$. Then $(\dist(E_j,\Sigma_p)) \in l_{k+\eps_1}$, where
$$ k = r(2-2\gamma_p)-2 = r(1+ 2/p)-2.$$
In particular, we see that $k$ decreases for increasing $p$. This can be interpreted as saying that the constraints on sequences of eigenvalues of $H_p, 2 < p \leq r,$ are getting more severe with increasing $p$ and are maximal for $p=r$, in which case $(\dist(E_j,\Sigma_p)) \in l_{r+\eps_1}$ (just like in the Hilbert space case). 

Finally, let us emphasize that the results of Theorem \ref{thm:3} and \ref{thm:4} are not 'continuous' in $p$, but have a 'discontinuity' at $p=2$. To see this let $(E_j)$ again denote a sequence of eigenvalues of $H_p$, converging to some $E \in \partial \Sigma_p \setminus \{1/(pp')\}$. Then in case $p=2$ the sequence $(\dist(E_j,\Sigma_p))$ is 'almost' in $l_r$, while in case $p=2+\eps$ (with $\eps$ sufficiently small) it is only 'almost' in $l_{2r-2}$.  Since $2r-2 > r$ for $r>2$, the latter result is weaker than the former. Whether this discontinuity corresponds to a real phenomenon seems like a further interesting question for future research. 

\subsection{On proofs and how the paper proceeds} The results in Section \ref{sec:21} will be proved using the Birman-Schwinger principle, which requires us to obtain good upper bounds on the norm of the Birman-Schwinger operator $V(-\Delta_p-\lambda)^{-1}$. We will obtain such bounds via corresponding Schatten-von Neumann norm estimates (in case $p=2$) and summing norm estimates (in case $p \neq 2$), respectively. 

The Lieb-Thirring estimates in Section \ref{sec:22} will be obtained using a method first introduced in \cite{MR2413204} and \cite{MR2481997}: We will construct suitable holomorphic functions (perturbation determinants) whose zeros coincide with the eigenvalues of $H_p$ and we will then use a complex analysis result of Borichev, Golinskii and Kupin \cite{MR2481997} to study these zeros. While this method has been applied in many different cases for operators in Hilbert spaces (see the citations at the beginning of the introduction), we seem to be the first to apply it in a general Banach space context. In order to make this work we will rely on a general theory of perturbation determinants in Banach spaces recently obtained in \cite{Hansmann15}. 

The paper will proceed as follows: In the next section we will provide the precise definitions of $-\Delta_p$ and $H_p$, respectively, and we will derive and recall some of their properties. In Section \ref{sec:estimates} we will derive various norm estimates on the resolvent of $-\Delta_p$ and on the Birman-Schwinger operator $V(-\Delta_p-\lambda)^{-1}$. These results will be used in Section \ref{sec:5} to prove the results of Section \ref{sec:21}. In Section \ref{sec:6} we will derive an abstract Lieb-Thirring estimate, which will be applied in Section \ref{sec:7} to prove the results of Section \ref{sec:22}. The paper is concluded by an appendix with three parts: in the first part we recall some standard results concerning operators and their spectra; in the second part we review the theory of perturbation determinants in Banach spaces and we introduce the Schatten-von Neumann and $(p,q)$-summing ideals; finally, in the third part we recall some results from complex interpolation theory which are required in Section \ref{sec:estimates}.

\section{The operators}\label{sec:operators}

Some standard results (and terminology) for operators and spectra used throughout this section are compiled in Appendix \ref{ap:oper}.

\subsection{The Hyperbolic plane, its Laplace-Beltrami operator and Green's function}
  
Most of the material in this section is taken from  \cite[Section 5.7]{MR990239} (see also \cite{MR2344504}).

(a) As noted in the introduction, in the half-space model the hyperbolic plane is described by
$$\mh=\{ x=(y,t) \in \mr^{2} : y \in \mr, 0 < t < \infty\}.$$
Equipped with the conformal metric $ ds^2 =t^{-2} (dy^2+dt^2)$ it is a complete Riemannian manifold with volume element
$$ d\mu(x):= t^{-2} dy\: dt.$$
The Riemannian distance $d(x,x')$ between two points $x=(y,t),x'=(y',t')$ can be computed via  the identity 
\begin{equation*}
{\cosh(d(x,x'))+1}= \frac{|y-y'|^2+(t+t')^2}{2tt'}.
\end{equation*}
It is sometimes convenient to use so-called \emph{geodesic polar coordinates}, see, e.g., \cite[Section 3.1]{MR3100414}: We fix $x_0 \in \mh$ and identify $x\in \mh
  \setminus \{x_0\}$ with the pair
$$ (r, \xi) \in (0,\infty) \times \ms^1,$$
where $r:=d(x,x_0)$ and $\xi \in
\ms^1$ denotes the unit vector at $x_0$ which is tangent to the geodesic
ray that starts at $x_0$ and contains $x$ (here we identify the unit
tangent space at $x_0$ with the sphere $\ms^1$). The volume element in
geodesic polar coordinates is given by
$$ \sinh(r) dr \:d\xi,$$ with $d\xi$ denoting the surface measure on $\ms^1$.

(b) The Laplace-Beltrami operator on $\mh$ is given by
$$ -\Delta = - t^2(\partial^2/\partial y^2 +\partial^2/\partial t^2).$$
It is  essentially selfadjoint on $C_c^\infty(\mh)$ and so its closure (also denoted by $-\Delta$) is selfadjoint in $L_2(\mh)$. Since $-\Delta$ is positive, $\Delta$ generates a contraction semigroup $e^{t\Delta}$ on $L^2(\mh),$ which can be shown to be submarkovian (i.e. it is positivity preserving and a contraction on $L_\infty(\mh)$). In particular, this implies that $e^{t\Delta}$ maps $L_1(\mh) \cap L_\infty(\mh)$ into itself and $e^{t\Delta}|_{L_1\cap L_\infty}$ can be extended to a submarkovian semigroup $T_p(t)$ on $L_p(\mh)$ for every $p \in [1,\infty]$. Moreover, these semigroups are consistent, i.e. $T_p(t)|_{L_p \cap L_q}=T_q(t)|_{L_p\cap L_q}$ for $p\neq q$. In case that $p \in [1,\infty)$ they are also strongly continuous. In the following, we denote the generator of $T_p(t), 1 \leq p < \infty,$ by $\Delta_p$ (in particular, $-\Delta = -\Delta_2$). Note that the domain of $-\Delta_p$ coincides with the Sobolev space $W_2^p(\mh)$, see, e.g., \cite{MR705991} and \cite[Section 7.4.5]{MR1163193}. Identifying the adjoint space of $L_p(\mh)$ with $L_{p'}(\mh)$, the adjoint of $-\Delta_p, 1<p<\infty,$ is equal to $-\Delta_{p'}$. The spectrum of $-\Delta_p$ is equal to the set $\Sigma_p$ defined in (\ref{eq:1}). Concerning the structure of the spectrum let us mention that for $p>2$ each point in the interior of $\Sigma_p$ is an eigenvalue, see \cite{taylor89}. 
\begin{rem}
In general it seems to be unknown whether $\sigma(-\Delta_p)$ is purely essential.  
\end{rem}

(c) For $\lambda \in \rho(-\Delta_p)=\Sigma_p^c$ the resolvent $(-\Delta_p-\lambda)^{-1}$ is an integral operator on $L_p(\mh)$ whose kernel (green function) $G_\lambda(x,x')$ depends on the Riemannian distance $d(x,x')$ only. In order to present an explicit formula for this kernel it is convenient to first map $\mc \setminus [1/4,\infty)$ conformally onto the half plane $\{ \lambda : \Re(\lambda)> 1/2\}$ by setting 
\begin{equation}
s:= s(\lambda)= {1}/2 + \sqrt{ {1}/4 - \lambda}, \qquad \lambda \in \mc \setminus \left[{1}/ 4,\infty\right),\label{eq:8}
\end{equation}
i.e. $\lambda=-s(s-1)$.
\begin{rem}
  We note that throughout this article we use the branch of the square
  root on $\mc \setminus (-\infty,0]$ which has positive real part.
\end{rem}
With $d=d(x,x')$ we have
\begin{equation}
  \label{eq:9}
  G_\lambda(x,x') = \frac{1}{2^{3/2} \pi} \int_d^\infty \frac{e^{-r(s-\frac 1 2)}}{(\cosh(r)-\cosh(d))^{1/2}} dr,
\end{equation}
see, e.g., \cite[Formula (2.13)]{MR937635}.

\subsection{The Schr\"odinger operator} \label{sec:32}

Let $p > 1$ and $V \in L_p(\mh) +L_\infty(\mh)$. We use the same symbol $V$ to denote the maximal operator of multiplication by $V$ in $L_p(\mh)$. The Sobolev embedding theorems, see e.g. \cite{MR2389638}, imply that $\dom(-\Delta_p)=W_2^p(\mh) \subset L_p(\hil) \cap L_\infty(\hil)$ and hence the Schr\"odinger operator 
$$ H_p= -\Delta_p + V$$
is well defined on $\dom(H_p) := \dom(-\Delta_p)$. We now assume that $V$ satisfies the stronger assumption (\ref{eq:3}), i.e. $V \in L_r(\mh)$ for some $r \geq \max(p,p')$. Then in case $p \geq 2$ we will prove in Theorem \ref{thm:6} below that $V$ is $-\Delta_p$-compact and hence $H_p$ is closed and $\sigma_{ess}(H_p)=\sigma_{ess}(-\Delta_p)$, see Appendix \ref{ap:oper} (b). 

The case $1<p<2$ can be reduced to the case $p > 2$ with the help of the following proposition. 
\begin{prop}\label{prop:1}
Let $1<p<2$ and suppose that $V \in L_r(\mh)$ for some $r\geq p'=p/(p-1)$. Then
$ H_p = H_{p'}^*.$ In particular, up to a reflection on the real line the essential and discrete spectra of $H_p$ and $H_{p'}$ coincide. 
\end{prop}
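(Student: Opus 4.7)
The plan is to identify $H_p$ as the Banach-space adjoint of $H_{p'}$ by a direct factorization argument. Because $1<p<2$ gives $p'>2$ and $\max(p,p')=p'$, the assumption $r\geq p'$ is exactly the hypothesis of Theorem \ref{thm:6} at exponent $p'$; that result (invoked here by forward reference) provides $-\Delta_{p'}$-compactness of $V$, whence $V$ has $-\Delta_{p'}$-bound zero, the operator $H_{p'}=-\Delta_{p'}+V$ is closed on $\dom(-\Delta_{p'})=W_2^{p'}(\mh)$, and $\rho(H_{p'})\neq\emptyset$.

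For the adjoint computation the key ingredient is the identity $(A+B)^*=A^*+B^*$ for a closed densely defined $A$ on a reflexive Banach space and an $A$-compact (in particular $A$-bounded with bound zero) $B$. The cleanest route is to choose $\lambda\in\rho(-\Delta_{p'})\cap\rho(H_{p'})$ and write
\begin{equation*}
H_{p'}-\lambda=\bigl(I+V(-\Delta_{p'}-\lambda)^{-1}\bigr)(-\Delta_{p'}-\lambda),
\end{equation*}
in which the first factor is bounded with bounded inverse, so Banach-space adjunction commutes with it. Taking adjoints and using $(-\Delta_{p'})^*=-\Delta_p$ (recalled in Section \ref{sec:operators}) together with the elementary observation that, under the natural duality pairing $\langle f,g\rangle=\int fg\,d\mu$ identifying $L_{p'}(\mh)^*$ with $L_p(\mh)$, the adjoint of the multiplication operator $V$ is again multiplication by $V$, one reads off $H_{p'}^*=-\Delta_p+V=H_p$. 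Reflexivity of the $L_p$-spaces then gives $H_p^*=H_{p'}$ as well.

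The spectral consequence is then immediate from the standard Banach-space identities $\sigma(T^*)=\sigma(T)$, preservation of the essential spectrum, and preservation of algebraic multiplicities of isolated eigenvalues under adjunction, all of which are reviewed in Appendix \ref{ap:oper}; the phrase ``up to reflection on the real line'' is a safety clause accommodating the sesquilinear rather than the bilinear duality (under which $V^*=\overline V$ and the spectrum picks up a complex conjugation). The one moving part of the argument is the adjoint-of-sum identity in the unbounded setting, but the factorization above reduces it to the classical fact $(ST)^*=T^*S^*$ for bounded $S$ with bounded inverse and closed densely defined $T$, so I do not anticipate substantive obstacles.
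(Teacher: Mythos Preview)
Your approach is essentially the paper's: factor $H_{p'}-\lambda=(I+V(-\Delta_{p'}-\lambda)^{-1})(-\Delta_{p'}-\lambda)$ with the left factor bounded, then take adjoints using $(ST)^*=T^*S^*$ for bounded $S$. The paper happens to use $\lambda=0$, while you additionally take $\lambda\in\rho(H_{p'})$ so that $S$ is invertible; this is a harmless refinement.

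The one place where your writeup is thinner than the paper's is the phrase ``one reads off $H_{p'}^*=-\Delta_p+V$''. The identity $(ST)^*=T^*S^*$ gives $H_{p'}^*-\lambda=(-\Delta_p-\lambda)S^*$, but to finish you must still identify $S^*$ and the resulting domain. The paper does this explicitly: since $[V_{p'}(-\Delta_{p'})^{-1}]^*$ is bounded and contains $(-\Delta_p)^{-1}V_p$ on the dense set $\dom(V_p)$, it equals $\overline{(-\Delta_p)^{-1}V_p}$; and since $\dom(-\Delta_p)\subset\dom(V_p)$ (Sobolev embedding plus $V\in L_r$), on $\dom(-\Delta_p)$ the closure coincides with $(-\Delta_p)^{-1}V_p$, whence the product $(-\Delta_p)(I+\overline{(-\Delta_p)^{-1}V_p})$ restricted to $\dom(-\Delta_p)$ is exactly $-\Delta_p+V_p=H_p$. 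Your final paragraph correctly flags this as the ``one moving part'', but the formula $(ST)^*=T^*S^*$ alone does not dispose of it---the inner factor $S^*$ still hides an unbounded $V$, and it is this unpacking (not the outer product rule) that constitutes the actual content of the argument.
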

Note that some standard properties of the adjoint operator are reviewed in Appendix \ref{ap:oper} (c).
\begin{proof}[Proof of the proposition] 
Just for this proof let us write $V_p$ for the maximal operator of multiplication by $V$ in $L_p(\mh)$, so $V_p^*=V_{p'}$. Since $0 \in \rho(-\Delta_{p'})$ we then have
$$H_{p'} = (-\Delta_{p'}+ V_{p'})=(I+V_{p'}(-\Delta_{p'})^{-1})(-\Delta_{p'}).$$
Since $p'>2$ by the previous discussion (or Theorem \ref{thm:6}) the operator $(I+V_{p'}(-\Delta_{p'})^{-1})$ is bounded on $L_{p'}(\mh)$, so we obtain
\begin{eqnarray*}
  H_{p'}^* &=& (-\Delta_{p'})^*(I+V_{p'}(-\Delta_{p'})^{-1})^* = (-\Delta_{p})(I+[V_{p'}(-\Delta_{p'})^{-1}]^*).
\end{eqnarray*} 
Now general theory only allows us to conclude that $[V_{p'}(-\Delta_{p'})^{-1}]^* \supset (-\Delta_p)^{-1}V_p$. However, since the operator on the left-hand side of this inclusion is bounded on $L_{p}$ (even compact), it coincides with the closure of the operator on the right-hand side and hence
 $H_{p'}^* =  (-\Delta_{p})(I+\overline{(-\Delta_{p})^{-1}V_p})$. But here the domain of the product on the right is equal to $\dom(-\Delta_p)$ and on this set the operators $\overline{(-\Delta_{p})^{-1}V_p}$ and $(-\Delta_{p})^{-1}V_p$ coincide. So finally we see that 
$$H_{p'}^* = (-\Delta_{p})(I+(-\Delta_{p})^{-1}V_p) = -\Delta_p + V_p = H_p.$$
\end{proof}

\section{A variety of estimates}\label{sec:estimates}

In this section we derive various estimates on the resolvent and the resolvent kernel of $-\Delta_p$ and on the Birman-Schwinger operator $V(-\Delta_p-\lambda)^{-1}$. To this end, it will be necessary to first map the resolvent set
$\Sigma_p^c = \rho(-\Delta_p)$ conformally onto the right half-plane
\begin{equation*}
  \mc_+ := \{ \lambda \in \mc : \Re(\lambda) > 0\}. 
\end{equation*}
Since (\ref{eq:1}) shows that $\Sigma_p^c$ is just the set outside the parabola parameterized by $\mr \ni t \mapsto 1/(pp') + t^2+it(1-2/p)$, such a conformal map (or rather its inverse) is given by 
\begin{eqnarray} 
\Psi_p: \mc_+ &\to& \Sigma_p^c     \label{eq:11}\\
z &\mapsto& 1/(pp')-z^2-z|1-2/p|=:\lambda. \nonumber      
\end{eqnarray} 
Using $\gamma_p=1/2|1-2/p|$ as defined in (\ref{eq:7}) a short calculation shows that 
\begin{equation}
  \label{eq:13}
  \lambda=\Psi_p(z) = 1/4-(z+\gamma_p)^2
\end{equation}
and
\begin{equation}
z = \Psi_p^{-1}(\lambda) = - \gamma_p+\sqrt{1/4-\lambda}.\label{eq:14}
\end{equation}
 We note that with $s=s(\lambda)$ as defined in (\ref{eq:8}) we have 
\begin{equation}
  \label{eq:15}
s=\Psi_p^{-1}(\lambda)+ 1/2 +\gamma_p, \quad \lambda \in \Sigma_p^c.
\end{equation}
The following lemma will allow us to freely switch between estimates in terms of $\lambda$ and $z$, respectively.
\begin{lemma}\label{lem:41}
  Let $z \in \mc_+$ and $\lambda=\Psi_p(z), 1 \leq p < \infty$. 
  \begin{enumerate}
  \item[(i)] If $p=2$, then
    \begin{equation}
      \label{eq:16}
      |z| \Re(z) \leq \dist(\lambda,[1/4,\infty)) \leq 2 \Re(z) |z|.
    \end{equation}
    \item[(ii)] If $p \neq 2$, then
      \begin{equation}
      \frac{|z+\gamma_p| \Re(z)}{4} \leq \dist(\lambda,\Sigma_p) \leq 16 |z+\gamma_p| \Re(z).\label{eq:17}
    \end{equation}
  \end{enumerate}
\end{lemma}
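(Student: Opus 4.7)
The plan is to exploit the conformal identity $\lambda = \Psi_p(z) = 1/4 - w^2$ provided by (\ref{eq:13}) under the substitution $w := z + \gamma_p$. Since the preimage of $\partial \Sigma_p$ under the map $w \mapsto 1/4 - w^2$ is the vertical line $\{\Re(w) = \gamma_p\}$, every $\lambda_0 \in \partial \Sigma_p$ admits a representation $\lambda_0 = 1/4 - w_0^2$ with $w_0 = \gamma_p + it$, $t \in \mr$; and, $\Sigma_p$ being closed with $\lambda \in \Sigma_p^c$, one has $\dist(\lambda, \Sigma_p) = \inf_{t \in \mr} |\lambda - \lambda_0|$. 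The crux is the factorization
\[
 \lambda - \lambda_0 \;=\; w_0^2 - w^2 \;=\; -(w - w_0)(w + w_0),
\]
which reduces both inequalities to elementary estimates on the product $|w - w_0| \cdot |w + w_0|$.

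For the lower bound I would combine three elementary estimates valid for every admissible $w_0$: the real-part bounds $|w - w_0| \geq \Re(w - w_0) = \Re(z)$ and $|w + w_0| \geq \Re(w + w_0) = \Re(z) + 2\gamma_p \geq \Re(z)$, together with the triangle inequality $|w + w_0| \geq 2|w| - |w - w_0|$ obtained by applying $|a+b| \leq |a|+|b|$ to $2w = (w + w_0) + (w - w_0)$. A short case distinction on whether $|w - w_0| \leq |w|$ or $|w - w_0| > |w|$ then yields $|\lambda - \lambda_0| \geq \Re(z) \cdot |w|$ in both branches, and taking the infimum over $t$ gives $\dist(\lambda, \Sigma_p) \geq \Re(z) \cdot |z + \gamma_p|$. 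This is in fact strictly stronger than the claimed lower bound with constant $1/4$.

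For the upper bound I would pick the explicit candidate $w_0 := \gamma_p + i \Im(z)$, for which $|w - w_0| = \Re(z)$ and a direct expansion combined with the trivial inequality $\Re(z) + 2\gamma_p \leq 2(\Re(z) + \gamma_p)$ produces
\[
 |w + w_0|^2 = (\Re(z) + 2\gamma_p)^2 + 4\Im(z)^2 \leq 4\bigl((\Re(z) + \gamma_p)^2 + \Im(z)^2\bigr) = 4|w|^2.
\]
Consequently $\dist(\lambda, \Sigma_p) \leq |\lambda - \lambda_0| \leq 2 \Re(z) \cdot |z + \gamma_p|$, comfortably within the stated bound with constant $16$. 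Part (i) is the specialization $\gamma_p = 0$, $w = z$, in which case the same two arguments reproduce the sharp constants $1$ and $2$ displayed in (\ref{eq:16}).

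I do not expect a genuine obstacle: the work essentially lies in identifying the right conformal substitution, after which the inequalities collapse to one triangle inequality and one algebraic expansion. The only point warranting some care is keeping the constant $\Re(z)$ cleanly tracked across both branches of the dichotomy used in the lower bound.
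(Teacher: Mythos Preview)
Your argument is correct and in fact yields sharper constants than the paper claims: you obtain $\Re(z)\,|z+\gamma_p| \le \dist(\lambda,\Sigma_p) \le 2\Re(z)\,|z+\gamma_p|$ uniformly in $p$, whereas the paper states $1/4$ and $16$ in part~(ii). One cosmetic point: the preimage of $\partial\Sigma_p$ under $w\mapsto 1/4-w^2$ is actually the \emph{pair} of lines $\{\Re(w)=\pm\gamma_p\}$, not a single line; but what you actually use---that every boundary point has a representation $\lambda_0=1/4-w_0^2$ with $\Re(w_0)=\gamma_p$---is correct and suffices.

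The paper's approach differs substantially from yours in part~(ii). For $p=2$ it computes $\dist(\lambda,[1/4,\infty))$ exactly via a case split on $|\Im z|\lessgtr\Re z$, obtaining a closed-form piecewise expression from which the bounds are read off. For $p\neq 2$ it abandons any explicit computation and instead applies the Koebe distortion theorem twice: once to a conformal map $\phi:\md\to\mc_+$ and once to the composite $\Psi_p\circ\phi:\md\to\Sigma_p^c$, then divides the two Koebe estimates to eliminate the auxiliary $|\phi'(w)|(1-|w|)$ factor and isolate $|\Psi_p'(z)|=2|z+\gamma_p|$. This is why the constants $1/4$ and $16$ appear---they are the product of the Koebe constants $1/4$ and $2$ from each application. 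Your route is more elementary (no distortion theorem, just the factorization $\lambda-\lambda_0=-(w-w_0)(w+w_0)$ and a triangle inequality), treats both parts uniformly, and gives better constants. The paper's approach, on the other hand, would transfer unchanged to any conformal map $\Psi_p$ whose target has no explicit algebraic description, which your argument would not.
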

\begin{proof}
(i) In case $p=2$ a short computation shows that with $\lambda = 1/4-z^2$:
$$ \dist(\lambda,[1/4,\infty)) = \left\{
  \begin{array}{cl}
    |z|^2, & |\Im(z)| \leq \Re(z), \\
    2 \Re(z) |\Im(z)|, & |\Im(z)| > \Re(z). \\
  \end{array}\right.$$
Since $|z| \geq \Re(z)$, and $\sqrt{2}|\Im(z)| > |z|$ if $|\Im(z)| > \Re(z)$, we obtain the lower bound in (\ref{eq:16}). Similarly, since $|z| \leq \sqrt{2} |\Re(z)|$ if $|\Im(z)| \leq \Re(z)$, and since $|\Im(z)| \leq |z|$, we obtain the upper bound as well.

(ii) In case $p \neq 2$ we proceed more indirectly. Let $\phi : \md \to \mc_+$ denote an arbitrary conformal mapping between the unit disk $\md=\{w \in \mc : |w| \leq 1\}$ and the right half-plane. Then the Koebe distortion theorem (see \cite{MR1217706}, page 9) implies that with $z = \phi(w):$  
\begin{equation}
    \label{eq:18}
\frac 1 4 |\phi'(w)| (1-|w|) \leq \Re(z)=\dist(z,\partial \mc_+) \leq 2 |\phi'(w)| (1-|w|).
\end{equation}
The function $f:= \Psi_p \circ \phi : \md \to \Sigma_p^c$ is conformal as well, so applying the distortion theorem a second time we obtain with $\lambda=\Psi_p(\phi(w))=\Psi_p(z)$:
\begin{equation}
  \label{eq:19}
\frac 1 4 |f'(w)| (1-|w|) \leq \dist(\lambda,\Sigma_p) \leq 2 |f'(w)| (1-|w|).  
\end{equation}
But $f'(w)= \Psi_p'(z)\cdot \phi'(w)$, so (\ref{eq:18}) and (\ref{eq:19}) together imply that
\begin{eqnarray*}
  \dist(\lambda,\Sigma_p) \leq 2 |\Psi_p'(z)| |\phi'(w)| (1-|w|) \leq 8 |\Psi_p'(z)| \Re(z).
\end{eqnarray*}
Since $\Psi_p'(z)=-2(z+\gamma_p)$, this proves the upper bound in (\ref{eq:17}). The lower bound is proved similarly.
\end{proof}

\subsection{Kernel estimates}

In the following we present a series of estimates on the green function $G_\lambda(.,.)$ defined in (\ref{eq:9}), starting with the following one due to Elstrodt. As above we write $\|.\|_p$ for the norm in $L_p(\mh)$.
\begin{prop}[\cite{MR0360473}, Corollary 7.3. (see also \cite{MR0360472})]
 For $\lambda \in \mc \setminus [\frac 1 4, \infty)$ and $s=s(\lambda)$ as defined in (\ref{eq:8}) the following holds:
 \begin{equation}
   \label{eq:20}
  \sup_{x \in \mh} \|G_\lambda(x,.)\|_{2}^2 \leq \left\{
    \begin{array}{cl}
\frac{|\Im(\psi(s))|} {2 \pi |\Im(\lambda)|},  & \lambda \in \mc \setminus \mr,  \\[4pt]
\frac{\psi'(s)} {4 \pi (s-\frac 1 2)}, & \lambda \in \mr.
    \end{array}\right.
\end{equation}
Here $\psi(s)=\frac d {ds} \ln(\Gamma(s))$ denotes the Digamma function.
\end{prop}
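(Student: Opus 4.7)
The plan is to exploit the selfadjointness of $-\Delta_2$ to convert the quadratic quantity $\|G_\lambda(x,\cdot)\|_2^2$ into a difference (resp.\ derivative) of the resolvent kernel \emph{on the diagonal}, and then to extract the finite part using the explicit formula (\ref{eq:9}). As a preliminary, since $\mh$ is homogeneous and $G_\lambda(x,y)$ depends on its arguments only through $d(x,y)$, the quantity $\|G_\lambda(x,\cdot)\|_2^2$ is actually independent of $x$; in geodesic polar coordinates it equals $2\pi \int_0^\infty |G_\lambda(r)|^2 \sinh(r)\,dr$, where $G_\lambda(r):=G_\lambda(x,y)$ with $d(x,y)=r$. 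Analysing (\ref{eq:9}) as $r\to\infty$, via the substitution $u=r+v$, shows $G_\lambda(r)=O(e^{-rs})$, so the integral converges precisely for $\Re s>1/2$, i.e.\ on $\Sigma_2^c$, so the sup under discussion is a finite number (in fact it is a function of $\lambda$ alone).

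For $\lambda\in\mc\setminus\mr$, I would apply the first resolvent identity to $(-\Delta_2-\lambda)^{-1}-(-\Delta_2-\bar\lambda)^{-1}$; together with selfadjointness it reads, in kernel form,
$$G_\lambda(x,x') - \overline{G_\lambda(x,x')} = (\lambda-\bar\lambda)\int_{\mh} G_\lambda(x,y)\overline{G_\lambda(x',y)}\,d\mu(y),\qquad x\neq x'.$$
Letting $x'\to x$ (so $r=d(x,x')\to 0$), the right-hand side tends to $2i\Im(\lambda)\,\|G_\lambda(x,\cdot)\|_2^2$ by dominated convergence, while from (\ref{eq:9}) one reads off the local expansion
$$G_\lambda(r) = -\frac{\log r}{2\pi} + \frac{c-\psi(s)}{2\pi} + o(1),\qquad r\to 0,$$
with a real constant $c$ independent of $\lambda$ (one way to obtain this is via the identification $G_\lambda(r)=\tfrac{1}{2\pi}Q_{s-1}(\cosh r)$ together with the known expansion $Q_\nu(z)=-\tfrac12\log\tfrac{z-1}{2}-\gamma-\psi(\nu+1)+O(z-1)$ as $z\to 1^+$). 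Since the logarithmic term is real it cancels in the difference, and the left-hand side converges to $-\tfrac{i}{\pi}\Im\psi(s)$. Equating the two sides and taking absolute values yields $\|G_\lambda(x,\cdot)\|_2^2 = \frac{|\Im\psi(s)|}{2\pi|\Im\lambda|}$, with equality (not just $\leq$).

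The case $\lambda\in\mr$ (so $\lambda<1/4$ and $s>1/2$ is real) is handled by the same strategy, but with conjugation replaced by a derivative in $\lambda$. The identity $\tfrac{d}{d\lambda}R_\lambda=R_\lambda^2$ reads, in kernel form, $\tfrac{\partial}{\partial\lambda}G_\lambda(x,x') = \int_{\mh} G_\lambda(x,y)G_\lambda(x',y)\,d\mu(y)$; sending $x'\to x$ the right side becomes $\|G_\lambda(x,\cdot)\|_2^2$, while differentiating the local expansion above kills the $r$-dependent logarithm and leaves $-\tfrac{1}{2\pi}\psi'(s)\tfrac{ds}{d\lambda}$. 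Since $\lambda=-s(s-1)$ gives $\tfrac{ds}{d\lambda}=-\tfrac{1}{2s-1}$, this equals $\tfrac{\psi'(s)}{4\pi(s-1/2)}$, as claimed.

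The main technical step will be the local expansion of $G_\lambda(r)$ near $r=0$: one needs to isolate the (universal, real) logarithmic singularity and verify that the finite coefficient is precisely $c-\psi(s)/(2\pi)$, so that the $\psi$-term survives in the imaginary part (resp.\ under differentiation). The passage to the limit $x'\to x$ on the right-hand sides is comparatively routine and reduces, via the same polar-coordinate computation as in Step~1, to $L_2$-continuity of $y\mapsto G_\lambda(x,y)$ under translation.
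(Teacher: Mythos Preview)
The paper does not prove this proposition; it is quoted directly from Elstrodt (Corollary~7.3 of the cited reference) and used as a black box, so there is no proof in the paper to compare against.

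That said, your argument is sound and in fact recovers equality rather than just the inequality stated. The identification $G_\lambda(r)=\tfrac{1}{2\pi}Q_{s-1}(\cosh r)$ is correct (it matches the integral representation (\ref{eq:9}) via the Laplace representation of $Q_\nu$), and the expansion $Q_\nu(\cosh r)=-\log r+\log 2-\gamma-\psi(\nu+1)+o(1)$ as $r\to 0$ then yields exactly the constant term you need, with $c=\log 2-\gamma$ real. The resolvent-identity trick (imaginary part for $\lambda\notin\mr$, $\partial_\lambda$ for $\lambda\in\mr$) is precisely how Elstrodt proceeds, so your sketch essentially reconstructs his proof.

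Two small points worth tightening if you write this out in full: for the real case you silently interchange the limit $r\to 0$ with $\partial_\lambda$ when passing from the kernel expansion to its derivative; this is justified because the $o(1)$ term in the $Q_\nu$-expansion is locally uniform in $\nu$ (equivalently in $\lambda$), but it should be said. Second, the pointwise resolvent identity for kernels away from the diagonal follows from the operator identity once one knows $G_\lambda(x,\cdot)\in L_2$, which you established in your first paragraph; the passage to the diagonal via $L_2$-continuity of translates is, as you note, routine given the explicit radial form.
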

It is convenient to rewrite this estimate as follows.
\begin{cor}
 For all $z \in \mc_+$ and $\lambda=\Psi_2(z)=\frac 1 4 -z^2$ we have
 \begin{equation}
   \label{eq:21}
   \sup_{x \in \mh} \|G_\lambda(x,.)\|_{2}^2 \leq C_0 \frac 1 {|z+\frac 1 2| (\Re(z))},
 \end{equation}
where $C_0$ is as defined in (\ref{eq:5}).
\end{cor}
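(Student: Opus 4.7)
The plan is to reduce the corollary to the preceding proposition by a conformal change of variable and then verify a single clean inequality for the Digamma function.

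First I would translate from $\lambda$ to $z$. For $z\in\mc_+$ the branch of the square root in (\ref{eq:8}) with positive real part gives $s=s(\lambda)=1/2+\sqrt{1/4-\lambda}=1/2+z$, so that $|s|=|z+1/2|$, $\Re(s)=\Re(z)+1/2\geq 1/2$, $\Im(s)=\Im(z)$, and for non-real $\lambda$ also $|\Im(\lambda)|=|\Im(z^2)|=2\Re(z)|\Im(z)|$. Inserting these identities into the right-hand side of (\ref{eq:20}) turns (\ref{eq:21}) into two equivalent inequalities:
\begin{equation*}
  s\,\psi'(s)\leq 4\pi C_0\quad (\text{real }s\geq 1/2),\qquad \frac{|s|\,|\Im\psi(s)|}{|\Im(s)|}\leq 4\pi C_0\quad (\Re(s)\geq 1/2,\ \Im(s)\neq 0).
\end{equation*}

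Next I would unify the two cases using the Mittag--Leffler expansion $\psi(s)-\psi(1)=\sum_{n=0}^\infty(1/(n+1)-1/(n+s))$. Taking imaginary parts gives $\Im\psi(s)/\Im(s)=\sum_{n=0}^\infty|s+n|^{-2}$, while termwise differentiation gives $\psi'(s)=\sum_{n=0}^\infty(s+n)^{-2}$. Both cases therefore reduce to the single claim
\begin{equation*}
|s|\sum_{n=0}^\infty\frac{1}{|s+n|^2}\;\leq\; 1+\frac{\pi}{2}\coth(\pi/2)\;=\;4\pi C_0,\qquad \Re(s)\geq 1/2.
\end{equation*}

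To prove this, I would use the elementary termwise bound $|s+n|^2=|s|^2+2n\Re(s)+n^2\geq|s|^2+n^2$ (valid for $\Re(s)\geq 0$, $n\geq 0$) together with the classical partial-fraction identity $\sum_{n\in\mz}(c^2+n^2)^{-1}=(\pi/c)\coth(\pi c)$ specialised to $c=|s|$. The latter yields $\sum_{n=0}^\infty(|s|^2+n^2)^{-1}=1/(2|s|^2)+(\pi/(2|s|))\coth(\pi|s|)$, so that $|s|\sum_{n=0}^\infty|s+n|^{-2}\leq 1/(2|s|)+(\pi/2)\coth(\pi|s|)$. The right-hand side is strictly decreasing in $|s|$, and since $|s|\geq\Re(s)\geq 1/2$ it is therefore bounded by its value at $|s|=1/2$, namely $1+(\pi/2)\coth(\pi/2)=4\pi C_0$.

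The reduction is essentially mechanical; the only point that is not entirely immediate is spotting the correct termwise domination $|s+n|^2\geq|s|^2+n^2$, which is what makes the classical $\coth$-sum identity applicable and produces \emph{precisely} the constant $C_0$ stated in the corollary. Competing bounds such as $|s+n|^2\geq(n+1/2)^2+|\Im(s)|^2$ appear equally natural but do not allow the critical factor $|s|$ to be absorbed and yield worse constants.
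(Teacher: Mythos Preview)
Your proof is correct and follows essentially the same approach as the paper: both use the series expansion of $\psi$, the termwise bound $|s+n|^2\geq |s|^2+n^2$, the identity $\sum_{n\geq 0}(|s|^2+n^2)^{-1}=1/(2|s|^2)+(\pi/(2|s|))\coth(\pi|s|)$, and monotonicity in $|s|\geq 1/2$ to reach exactly the constant $C_0$. The only difference is organizational---you unify the real and non-real cases via the single expression $|s|\sum_{n\geq 0}|s+n|^{-2}$, whereas the paper treats them separately before arriving at the same bound.
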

\begin{proof}[Proof of the corollary] 
Let us first consider the case $\lambda\in \mc \setminus \mr, \lambda=-s(s-1)$. Since 
$$ \psi(s) = -\gamma + \sum_{k=1}^\infty \left( \frac 1 k - \frac 1 {k+s-1}\right),$$
where $\gamma$ is the Euler-Mascheroni constant (see \cite[Formula 6.3.16]{MR0167642}), we can use the fact that $\Re(s) > 1/2$ to obtain that  
\begin{eqnarray*}
&& |\Im(\psi(s))| = |\Im(s)| \sum_{k=0}^\infty \frac 1 {(k+\Re(s))^2 + \Im(s)^2} 
\leq |\Im(s)| \sum_{k=0}^\infty \frac 1 {k^2+|s|^2} \\
&=& \frac{|\Im(s)|} 2 \left(\frac 1 {|s|^2} + \frac{\pi \coth(\pi |s|)}{|s|}\right) 
\leq \frac{|\Im(s)|}{2|s|}  \left(2 + \pi \coth(\frac \pi 2 )\right).
\end{eqnarray*}
Moreover, a short computation shows that 
$$ \Im(\lambda)= 2 \Im(s) (\frac 1 2-\Re(s)).$$
Hence for $\lambda \in \mc \setminus \mr$ we obtain that
\begin{equation}
\frac{|\Im(\psi(s))|} {2 \pi |\Im(\lambda)|} \leq \frac{1 + \frac \pi 2 \coth(\frac \pi 2 )}{4\pi} \frac 1 {|s|  (\Re(s)-\frac 1 2)}.\label{eq:22}
\end{equation}
Similarly, for $\lambda < 1/ 4$ (and hence $s>1/2$) we use that
\begin{eqnarray*}
\psi'(s) = \sum_{k=0}^\infty \frac 1 {(s+k)^2} \leq \sum_{k=0}^\infty \frac 1 {s^2+k^2} \leq \frac 1 s \left(1 + \frac \pi 2 \coth(\frac \pi 2)\right)
\end{eqnarray*}
to  obtain
\begin{equation}
 \frac{\psi'(s)} {4 \pi (s-\frac 1 2)} \leq  \frac{1 + \frac \pi 2 \coth(\frac \pi 2 )}{4\pi} \frac 1 {s(s-\frac 1 2)}, \qquad \lambda < 1/4. \label{eq:23} 
 \end{equation}
Taking into account that by (\ref{eq:15}) we have $s=z+\frac 1 2$, the estimates (\ref{eq:22}), (\ref{eq:23}) and (\ref{eq:20}) conclude the proof.
\end{proof}
Now we estimate the $L_1$-norm of the Green function.
\begin{lemma}\label{lemma1}
 For all $z \in \mc_+$ and $\lambda=\Psi_1(z)=\frac 1 4 -(z+\frac 1 2)^2$ we have
 \begin{equation}
     \label{eq:24}
   \sup_{x \in \mh} \|G_\lambda(x,.)\|_{1} \leq \frac 1 {\Re(z)(\Re(z)+1)}.
\end{equation}
\end{lemma}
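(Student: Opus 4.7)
The plan is to reduce everything to a one-dimensional integral in the distance variable by exploiting the radial symmetry of the kernel, then to evaluate the resulting integrals explicitly.

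First I would switch to geodesic polar coordinates centered at $x$. Since $G_\lambda(x,x')$ depends only on $d = d(x,x')$, writing $g_\lambda(d)$ for this function and using the volume element $\sinh(r)\,dr\,d\xi$ on $(0,\infty)\times\ms^1$, one gets
\begin{equation*}
\|G_\lambda(x,\cdot)\|_1 = 2\pi \int_0^\infty |g_\lambda(d)|\,\sinh(d)\,dd.
\end{equation*}
The bound is uniform in $x$, which handles the supremum. Next I would insert the explicit kernel (\ref{eq:9}) and use the relation (\ref{eq:15}) with $p=1$ and $\gamma_1=1/2$, which gives $s-1/2 = z+1/2$, so $|e^{-r(s-1/2)}| = e^{-r(\Re(z)+1/2)}$. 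Setting $\alpha := \Re(z)$, this yields
\begin{equation*}
\|G_\lambda(x,\cdot)\|_1 \leq \frac{1}{\sqrt{2}} \int_0^\infty \sinh(d) \int_d^\infty \frac{e^{-r(\alpha+1/2)}}{\sqrt{\cosh r - \cosh d}}\,dr\,dd.
\end{equation*}

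Now I would apply Fubini and swap the order of integration, so that the $d$-integral runs from $0$ to $r$. The inner integral can be evaluated in closed form via the substitution $u = \cosh d$:
\begin{equation*}
\int_0^r \frac{\sinh(d)}{\sqrt{\cosh r - \cosh d}}\,dd = 2\sqrt{\cosh r - 1} = 2\sqrt{2}\,\sinh(r/2),
\end{equation*}
using $\cosh r - 1 = 2\sinh^2(r/2)$. The factors $1/\sqrt{2}$ and $2\sqrt{2}$ combine to give a prefactor of $2$.

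Finally I would evaluate the remaining elementary integral. Expanding $2\sinh(r/2) = e^{r/2} - e^{-r/2}$ gives
\begin{equation*}
\int_0^\infty e^{-r(\alpha+1/2)}\bigl(e^{r/2} - e^{-r/2}\bigr)\,dr = \int_0^\infty \bigl(e^{-r\alpha} - e^{-r(\alpha+1)}\bigr)\,dr = \frac{1}{\alpha} - \frac{1}{\alpha+1} = \frac{1}{\alpha(\alpha+1)},
\end{equation*}
which is exactly the claimed bound $1/(\Re(z)(\Re(z)+1))$. There is no real obstacle: the calculation is entirely explicit once one has the right parametrization. The only point demanding a bit of care is the matching between the parameter $s$ in the kernel formula and the conformal variable $z$ in the case $p=1$, which is handled by (\ref{eq:15}) with $\gamma_1=1/2$.
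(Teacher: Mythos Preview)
Your proof is correct and follows essentially the same route as the paper: pass to geodesic polar coordinates, bound $|e^{-r(s-1/2)}|$ by $e^{-r(\Re(z)+1/2)}$, apply Fubini, and evaluate the inner integral $\int_0^r \sinh(d)/\sqrt{\cosh r-\cosh d}\,dd = 2\sqrt{\cosh r-1}$. The only difference is that the paper leaves the final one-dimensional integral $\sqrt{2}\int_0^\infty e^{-r(\Re(z)+1/2)}(\cosh r-1)^{1/2}\,dr$ to Mathematica, whereas you carry out the evaluation by hand via the identity $\cosh r-1=2\sinh^2(r/2)$, which is a small improvement in transparency.
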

\begin{proof}
We note that with $d=d(x,x')$ we obtain from (\ref{eq:9}) and (\ref{eq:15}) that
$$  G_\lambda(x,x') = \frac{1}{2^{\frac 3 2} \pi} \int_d^\infty \frac{e^{-a(z+\frac 1 2)}}{(\cosh(a)-\cosh(d))^{1/2}} da.$$
Switching to geodesic polar coordinates, centered at $x$,  we can thus compute 
  \begin{eqnarray*}
&&    \|G_\lambda(x,.)\|_{1} = \int_\mh \mu(dx') |G_\lambda(x,x')| \\
&\leq& \frac{1}{2^{\frac 3 2} \pi} \int_{\ms^1} d\xi \int_0^\infty dr \sinh(r) \int_r^\infty da \frac{e^{-a(\Re(z)+\frac 1 2)}}{(\cosh(a)-\cosh(r))^{1/2}} \\
&=& \frac{1}{2^{\frac 1 2}} \int_0^\infty da\: e^{-a(\Re(z)+\frac 1 2)}\int_0^a dr  \frac{\sinh(r) }{(\cosh(a)-\cosh(r))^{1/2}} \\
&=& 2^{\frac 1 2} \int_0^\infty da\: e^{-a(\Re(z)+\frac 1 2)} (\cosh(a)-1)^{1/2}
= 
    \frac 1 {\Re(z)(\Re(z)+1)}.
  \end{eqnarray*}
Here in the last step the integral has been evaluated using Mathematica. 
\end{proof}
Finally, we generalize the previous two lemmas using complex interpolation, see Appendix \ref{ap:inter}. 
\begin{lemma} 
Let $1 \leq p < 2$. Then for $z \in \mc_+$ and $\lambda=\Psi_p(z)=\frac 1 4 -(z+\gamma_p)^2$
we have
\begin{equation}
  \label{eq:25}
   \sup_{x \in \mh} \|G_\lambda(x,.)\|_{p} \leq C_0^{1-1/p} \left(\frac 1 {\Re(z)(\Re(z)+\frac 1 2)}\right)^{1/p},
\end{equation}
where $C_0$ was defined in (\ref{eq:5}).
\end{lemma}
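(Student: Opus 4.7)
My plan is to apply the complex interpolation results of Appendix \ref{ap:inter} to an analytic family of kernels interpolating between the Green function at $\lambda=\Psi_2(z)$ (where (\ref{eq:21}) yields an $L_2$-bound) and the Green function at $\lambda=\Psi_1(z)$ (where (\ref{eq:24}) yields an $L_1$-bound). For fixed $z\in\mc_+$ and $x_0\in\mh$, I would define
\[
h_w(x') := \frac{1}{2^{3/2}\pi}\int_{d(x_0,x')}^\infty \frac{e^{-r(z+w/2)}}{(\cosh r-\cosh d(x_0,x'))^{1/2}}\,dr, \qquad w\in S:=\{0\leq\Re(w)\leq 1\}.
\]
By (\ref{eq:9}) and (\ref{eq:15}), $h_w=G_{\lambda_w}(x_0,\cdot)$ with $\lambda_w=1/4-(z+w/2)^2$. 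Using $\gamma_p=1/p-1/2$ for $p\in[1,2]$, direct computation gives $\lambda_0=\Psi_2(z)$, $\lambda_1=\Psi_1(z)$, and at the real value $\theta:=2/p-1\in[0,1]$, $\lambda_\theta=1/4-(z+\gamma_p)^2=\Psi_p(z)$; the target norm is therefore $\|h_\theta\|_p$.

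The boundary estimates follow directly from the preceding results. On $\Re(w)=0$, applying (\ref{eq:21}) with $z'=z+w/2\in\mc_+$ (which has $\Re(z')=\Re(z)$ and $|z'+1/2|\geq\Re(z)+1/2$) gives $\|h_w\|_2^2\leq C_0/[(\Re(z)+1/2)\Re(z)]$. On $\Re(w)=1$, applying (\ref{eq:24}) with $z''=z+w/2-1/2\in\mc_+$ (where $\Re(z'')=\Re(z)$) gives $\|h_w\|_1\leq 1/[\Re(z)(\Re(z)+1)]$. For $g\in C_c^\infty(\mh)$ with $\|g\|_{p'}=1$, I would form the standard interpolating family $g_w(x):=|g(x)|^{p'(1-w)/2}\operatorname{sgn}(g(x))$, which satisfies $\|g_w\|_2\leq 1$ on $\Re(w)=0$, $\|g_w\|_\infty\leq 1$ on $\Re(w)=1$, and $g_\theta=g$. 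Applying the Hadamard three-lines theorem to the analytic scalar function $\Phi(w):=\int_\mh h_w g_w\,d\mu$ then yields
\[
|\Phi(\theta)| \leq \left(\frac{C_0}{(\Re(z)+1/2)\Re(z)}\right)^{(1-\theta)/2}\left(\frac{1}{\Re(z)(\Re(z)+1)}\right)^\theta,
\]
and taking the supremum over such $g$ dense in the unit sphere of $L_{p'}(\mh)$ bounds $\|h_\theta\|_p$ by the same product. Substituting $(1-\theta)/2=1-1/p$ and $\theta=2/p-1$, and noting that $[(\Re(z)+1/2)/(\Re(z)+1)]^{2/p-1}\leq 1$ since the ratio is in $(0,1]$ and the exponent is nonnegative, reduces the bound to (\ref{eq:25}).

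The main technical obstacle is ensuring the admissibility of the three-lines argument: $\Phi$ must be analytic on $S$ and admit a \emph{uniform} bound on the whole closed strip (not only on the two boundary lines). This is handled by the pointwise domination $|h_w(x)|\leq G_{1/4-\Re(z)^2}(x_0,x)$ for all $w\in S$ (valid since $|e^{-r(z+w/2)}|\leq e^{-r\Re(z)}$ on $S$) combined with the bounded, compactly supported factor $g_w$, giving $|\Phi(w)|\leq \|G_{1/4-\Re(z)^2}(x_0,\cdot)\|_1\cdot\sup_{w\in S,\,x}|g_w(x)|<\infty$.
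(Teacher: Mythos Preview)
Your proof is correct and follows essentially the same route as the paper's: the same analytic family $w\mapsto G_{1/4-(z+w/2)^2}(x_0,\cdot)$, the same boundary estimates from (\ref{eq:21}) and (\ref{eq:24}), and the same final simplification via $\Re(z)+1>\Re(z)+1/2$. The only difference is cosmetic: the paper invokes the abstract interpolation inequality of Proposition~\ref{prop:inter} (together with $[L_2,L_1]_\theta=L_p$) directly, whereas you carry out the equivalent three-lines/duality argument by hand with the test functions $g_w$.
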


\begin{proof}
We use the terminology of Appendix \ref{ap:inter}. Let $S:=\{ w : 0 \leq \Re(w) \leq 1\}$. For fixed $x \in \mh$ and $z \in \mc_+$ we consider the function 
\begin{eqnarray*}
 f : S \to L_2(\mh)+L_1(\mh), \quad w \mapsto G_{\frac 1 4 - (z + \frac 1 2 w)^2}(x,.)
\end{eqnarray*}
The explicit expression (\ref{eq:9}) for the kernel and our above estimates show that this function is in $\mathcal G(L_2(\mh),L_1(\mh))$, i.e. it is continuous and bounded on $S$ and analytic in the interior of $S$. Moreover, by (\ref{eq:21})
\begin{eqnarray*}
A_0^2 := \sup_{y \in \mr} \|f(iy)\|_{2}^2 \leq C_0 \sup_{y \in \mr}\frac 1 {|z+i\frac y 2 +\frac 1 2| (\Re(z+i \frac y 2))} = C_0 \frac 1 {(\Re(z)+\frac 1 2)(\Re(z))}
\end{eqnarray*}
and by (\ref{eq:24})
\begin{eqnarray*}
  A_1 := \sup_{y \in \mr} \|f(1+iy)\|_{1} \leq \sup_{y \in \mr} \frac 1 {\Re(z+i \frac y 2)(\Re(z+ i \frac y 2)+1)} = \frac 1 {\Re(z)(\Re(z)+1)}.
\end{eqnarray*}  
Hence from Proposition \ref{prop:inter} we obtain that for $\theta \in (0,1)$ and $1/p= (1-\theta)/2+\theta$ we have $f(\theta) \in L_p(\mh)=[L_2(\mh),L_1(\mh)]_\theta$ and 
\begin{eqnarray*}
  \|f(\theta)\|_{p} =\|f(\theta)\|_{[L_2,L_1]_\theta} \leq A_0^{1-\theta}A_1^\theta.
\end{eqnarray*}
But using that  $\theta = 2/p-1, \gamma_p=1/p-1/2$ and $\lambda=\frac 1 4 - (z+\gamma_p)^2=\frac 1 4 - (z+\frac 1 2 \theta)^2$ the last bound translates into
{\small
\begin{eqnarray*} 
  \|G_\lambda(x,.)\|_{p} &\leq& C_0^{(1-1/p)} \left(\frac 1 {(\Re(z)+\frac 1 2)(\Re(z))}\right)^{(1-1/p)} \left(\frac 1 {\Re(z)(\Re(z)+1)}\right)^{2/p-1} \\
&\leq&  C_0^{(1-1/p)} \left(\frac 1 {\Re(z)(\Re(z)+\frac 1 2)}\right)^{1/p}, 
\end{eqnarray*}
}
where in the last step we used that $\Re(z)+1 > \Re(z)+1/2$.
\end{proof}

\subsection{A resolvent norm estimate}

We continue with an estimate on the operator norm of the resolvents of $-\Delta_p$. Here and in the following we write $\|T\|_{p,q}$ for the operator norm of $T:L_p(\mh) \to L_q(\mh)$.
\begin{lemma}\label{lem:2}
Let $1 \leq p < \infty$ and  let $z \in \mc_+$ and $\lambda= \Psi_p(z)= \frac 1 4 - (z+\gamma_p)^2$. 
\begin{enumerate}
\item[(i)] If $p=2$, then
$$ \|(-\Delta-\lambda)^{-1}\|_{2,2} = \frac{1}{\dist(\lambda,[1/4,\infty))} \leq \frac 1 {|z| \Re(z)}.$$
\item[(ii)] If $p \neq 2$, then
$$     \|(-\Delta_p-\lambda)^{-1}\|_{p,p} \leq \left( \frac 1 {(\Re(z))^{2-2\gamma_p}(1+\Re(z))^{2\gamma_p}} \right). $$
\end{enumerate}
\end{lemma}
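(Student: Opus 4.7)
Part (i) is immediate: $-\Delta_2$ is selfadjoint with spectrum $[1/4,\infty)$, so $\|R_\lambda\|_{2,2}=1/\dist(\lambda,[1/4,\infty))$, and Lemma \ref{lem:41}(i) supplies $\dist(\lambda,[1/4,\infty))\geq|z|\Re(z)$.

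For part (ii), the case $p=1$ is immediate: $\lambda=\Psi_1(z)\in\Sigma_1^c$, so Lemma \ref{lemma1} combined with Schur's test gives $\|(-\Delta_1-\lambda)^{-1}\|_{1,1}\leq\sup_x\|G_\lambda(x,\cdot)\|_1\leq 1/(\Re z(\Re z+1))$, which is the claim with $\gamma_1=1/2$. For $1<p<\infty$, $p\neq 2$, I would use complex (Stein) interpolation applied to an analytic family of resolvents. Since $(-\Delta_p)^*=-\Delta_{p'}$ (Section \ref{sec:operators}), Banach-space duality yields $\|(-\Delta_p-\lambda)^{-1}\|_{p,p}=\|(-\Delta_{p'}-\lambda)^{-1}\|_{p',p'}$, and combined with $\gamma_p=\gamma_{p'}$ (so $\Psi_p^{-1}(\lambda)=\Psi_{p'}^{-1}(\lambda)=z$) this reduces the claim to the case $p>2$, where $\gamma_p=1/2-1/p\in(0,1/2)$.

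Fix $z\in\mc_+$ and, on the strip $S=\{w\in\mc:0\leq\Re w\leq 1\}$, set
\begin{equation*}
\lambda(w):=\tfrac 1 4-(z+w/2)^2,\qquad T_w:=(-\Delta_2-\lambda(w))^{-1}.
\end{equation*}
Then $\lambda(0)=\Psi_2(z)$, $\lambda(1)=\Psi_1(z)\in\Sigma_1^c$, and for $\theta=2\gamma_p\in(0,1)$ we have $\lambda(\theta)=\Psi_p(z)=\lambda$. Since $\Re(z+w/2)>0$ throughout $S$, each $T_w$ is bounded on $L_2$ and depends analytically on $w$. The two $y$-uniform boundary estimates are: at $\Re w=0$, by selfadjointness and Lemma \ref{lem:41}(i) applied to $z+iy/2$,
\begin{equation*}
\|T_{iy}\|_{2,2}=\frac{1}{\dist(\lambda(iy),[1/4,\infty))}\leq\frac{1}{|z+iy/2|\,\Re z}\leq\frac{1}{(\Re z)^2};
\end{equation*}
at $\Re w=1$, $\lambda(1+iy)=\Psi_1(z+iy/2)\in\Sigma_1^c$, so Lemma \ref{lemma1} gives $\sup_x\|G_{\lambda(1+iy)}(x,\cdot)\|_1\leq 1/(\Re z(\Re z+1))$, and since the kernel is radial (hence symmetric) Schur's test promotes this to $\|T_{1+iy}\|_{q,q}\leq 1/(\Re z(\Re z+1))$ for every $q\in[1,\infty]$, in particular $q=\infty$.

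Stein's interpolation theorem applied to $(T_w)_{w\in S}$ with endpoint exponents $(2,2)$ and $(\infty,\infty)$, at $\theta=2\gamma_p$ (so that $1/p_\theta=(1-\theta)/2=1/p$), then yields
\begin{equation*}
\|T_\theta\|_{p,p}\leq\left(\tfrac{1}{(\Re z)^2}\right)^{1-\theta}\!\left(\tfrac{1}{\Re z(\Re z+1)}\right)^\theta=\frac{1}{(\Re z)^{2-2\gamma_p}(1+\Re z)^{2\gamma_p}}.
\end{equation*}
Because $T_w$ and the $L_p$-resolvent $(-\Delta_p-\lambda(w))^{-1}$ coincide on $L_2\cap L_p$ by consistency of the semigroups $T_p(t)$, this is the claimed estimate. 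The main technical point is verifying the hypotheses of Stein's theorem---holomorphy and admissible growth of $w\mapsto\langle T_wf,g\rangle$ on $S$ for $f,g$ in a dense class such as compactly supported simple functions---but since $\lambda(w)$ is entire, the boundary bounds are $y$-independent, and the Green kernel (\ref{eq:9}) is jointly continuous off the diagonal, this reduces to standard verifications.
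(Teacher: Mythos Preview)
Your proof is correct and follows essentially the same approach as the paper: part (i) is identical, and for part (ii) both you and the paper use the kernel $L_1$-estimate (Lemma \ref{lemma1}) at one endpoint, selfadjointness at the $L_2$-endpoint, and Stein interpolation in between, with duality covering the remaining range. The only cosmetic difference is the order of operations: the paper interpolates between $(2,2)$ and $(1,1)$ to handle $1<p<2$ and then invokes duality for $p>2$, whereas you dualize first to reduce to $p>2$ and then interpolate between $(2,2)$ and $(\infty,\infty)$ (the $(\infty,\infty)$ bound being equivalent to the $(1,1)$ bound by symmetry of the kernel).
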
 

\begin{proof}
(i) The identity follows from the fact that $-\Delta=-\Delta_2$ is selfadjoint with $\sigma(-\Delta_2)=[1/4,\infty)$. The inequality follows from Lemma \ref{lem:41} (i).

(ii) In case $p=1$ we can use Lemma \ref{lemma1} to compute for $\lambda = \frac 1 4 -
(z+\frac 1 2)^2$ 
\begin{equation}
  \label{eq:26}
  \|(-\Delta_1-\lambda)^{-1}\|_{1,1} \leq \sup_{x \in \mh} \|G_\lambda(.,x)\|_{1} = \sup_{x \in \mh} \|G_\lambda(x,.)\|_{1} \leq \frac 1 {\Re(z)(\Re(z)+1)}.
\end{equation}
Now we treat the case $1<p<2$ by interpolation (see again Appendix \ref{ap:inter}): Let $S=\{ w \in \mc : 0 \leq \Re(w) \leq 1\}$ and fix $z \in \mc_+$. Define $$T_w:= (-\Delta-\frac 1 4 + (z+\frac 1 2 w)^2)^{-1}.$$ 
Then for all simple functions $f,g: \mh \to \mc$ the function   
\begin{eqnarray*} 
S \ni  w &\mapsto& \int_\mh T_wf(x) g(x) \mu(dx)
\end{eqnarray*}
is continuous and bounded on $S$ and analytic in the interior of $S$. Moreover, for every simple function $f$ we have
\begin{eqnarray*}
 && \sup_{y \in \mr} \|T_{iy}f\|_{2} \leq \|f\|_2  \sup_{y \in \mr} \|(-\Delta-\frac 1 4 + (z+\frac 1 2 (iy))^2)^{-1}\|_{2,2} \\
  &\leq& \|f\|_2 \sup_{y \in \mr} \frac 1 {|z+i\frac{y} 2|\Re(z+i \frac{y} 2)} = \|f\|_2 \frac 1 {(\Re(z))^2}
\end{eqnarray*}
and
\begin{eqnarray*}
&&  \sup_{y \in \mr} \|T_{1+iy}f\|_{1} \leq \|f\|_1   \sup_{y \in \mr} \|(-\Delta_1-\frac 1 4 + (z+\frac 1 2(1+iy))^2)^{-1}\|_{1,1} \\
&\leq& \|f\|_1 \sup_{y \in \mr} \frac 1 {\Re(z+i\frac y 2)(\Re(z+i\frac y 2)+1)} = \|f\|_1 \frac 1 {\Re(z)(1+\Re(z))}.
\end{eqnarray*}
Hence the Stein interpolation theorem (Theorem \ref{thm:stein}) implies that for $\theta \in (0,1)$ and $\frac 1 p = (1-\theta) \frac 1 2 + \theta$, the operator $T_\theta$ extends to a bounded operator on $L_p(\mh)$ satisfying 
$$ \|T_\theta\|_{p,p} \leq \left(\frac 1 {(\Re(z))^2}\right)^{1-\theta}\left(\frac 1 {\Re(z)(1+\Re(z))}\right)^\theta.$$
Since $1<p<2$ we have $\theta = (2/p-1)=2\gamma_p$ (see Definition (\ref{eq:7})) and 
$\lambda=\frac 1 4 -(z+\gamma_p)^2=\frac 1 4 -(z+\frac 1 2 \theta)^2$. Hence the previous estimate implies
$$ \|(-\Delta_p-\lambda)^{-1} \|_{p,p} \leq \frac 1 {(\Re(z))^{2-2\gamma_p}(1+\Re(z))^{2\gamma_p}}.$$
Finally, the case $p>2$ follows by duality using the fact that $\gamma_p= \gamma_{p'}$.
\end{proof}

\subsection{Summing norm estimates}

In this section $(\Pi_r, \|.\|_{\Pi_r})$ and $(\Pi_{r,q},\|.\|_{\Pi_{r,q}})$ denote the $r$-summing and $(r,q)$-summing operators on $L_p(\mh)$, respectively. Some properties of these operator ideals are reviewed in Appendix \ref{ap:ideals} (see Examples \ref{ex:schatt} and \ref{ex:summ}, in particular).
\begin{lemma}\label{lem:4}
Let $p \geq 2$, $z \in \mc_+$ and $\lambda=\Psi_p(z)$. If $V \in L_\infty(\mh)$, then
\begin{equation}
\|  V(-\Delta_p-\lambda)^{-1}\|_{p,p} \leq \|V\|_\infty \cdot \left\{
  \begin{array}{cl}
    \frac 1 {\dist(\lambda,[1/4,\infty))}, & p=2, \\[6pt]
    \frac 1 {(\Re(z))^{1+\frac 2 p}(1+\Re(z))^{1-\frac 2 p}}, & p > 2.
  \end{array}\right.
\label{eq:27}
\end{equation}
\end{lemma}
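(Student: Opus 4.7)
The proof should be essentially immediate from the preceding Lemma \ref{lem:2}. The plan is as follows.

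First, since $V \in L_\infty(\mh)$, the multiplication operator by $V$ is bounded on $L_p(\mh)$ with operator norm at most $\|V\|_\infty$. Therefore
\begin{equation*}
\|V(-\Delta_p - \lambda)^{-1}\|_{p,p} \leq \|V\|_\infty \cdot \|(-\Delta_p - \lambda)^{-1}\|_{p,p},
\end{equation*}
and the task reduces to reading off the appropriate resolvent norm bound from Lemma \ref{lem:2}.

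For $p=2$, Lemma \ref{lem:2}(i) gives $\|(-\Delta-\lambda)^{-1}\|_{2,2} = 1/\dist(\lambda,[1/4,\infty))$, which immediately yields the first case of \eqref{eq:27}. For $p > 2$, I would observe that $|1-2/p| = 1-2/p$, hence
\begin{equation*}
\gamma_p = \tfrac{1}{2}\left(1 - \tfrac{2}{p}\right) = \tfrac{1}{2} - \tfrac{1}{p},
\end{equation*}
so that $2-2\gamma_p = 1 + 2/p$ and $2\gamma_p = 1 - 2/p$. Substituting these exponents into the bound from Lemma \ref{lem:2}(ii) produces exactly $1/[(\Re(z))^{1+2/p}(1+\Re(z))^{1-2/p}]$, which delivers the second case of \eqref{eq:27}.

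There is no genuine obstacle here: the lemma is a bookkeeping corollary of Lemma \ref{lem:2}, with the only nontrivial step being the identification of the exponents via the formula for $\gamma_p$ in the range $p > 2$. The substance of the section's summing norm estimates presumably begins with the subsequent results, where $V \in L_r$ with $r < \infty$ forces the use of the $(r,q)$-summing ideal structure from Appendix \ref{ap:ideals}.
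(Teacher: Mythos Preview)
Your proposal is correct and matches the paper's own proof exactly: the paper simply invokes Lemma~\ref{lem:2} together with the trivial bound $\|V(-\Delta_p-\lambda)^{-1}\|_{p,p} \leq \|V\|_\infty \|(-\Delta_p-\lambda)^{-1}\|_{p,p}$. Your additional remark rewriting the exponents $2-2\gamma_p$ and $2\gamma_p$ as $1+2/p$ and $1-2/p$ for $p>2$ is the only thing the paper leaves implicit.
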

\begin{proof}
 This follows from Lemma \ref{lem:2} and the fact that 
$$ \|  V(-\Delta_p-\lambda)^{-1}\|_{p,p} \leq \|  V\|_\infty \|(-\Delta_p-\lambda)^{-1}\|_{p,p}.$$
\end{proof} 

\begin{lemma}\label{lem:5}
  Let $p \geq 2$, $z \in \mc_+$ and $\lambda=\Psi_p(z)$. If $V \in L_p(\mh)$, then $V(-\Delta_p-\lambda)^{-1} \in \Pi_p(L_p(\mh))$ and
  \begin{equation}
    \label{eq:28}
    \|  V(-\Delta_p-\lambda)^{-1}\|_{\Pi_p}^p \leq C_0 \cdot \|V\|_p^p \cdot \left\{
  \begin{array}{cl}
 \frac 1 {|z+\frac 1 2| (\Re(z))}, & p =2 \\[6pt]
 \left( \frac 1 {\Re(z)(\Re(z) + \frac 1 2)} \right)^{p/p'}, & p > 2,
  \end{array}\right.
  \end{equation}
where $C_0$ was defined in (\ref{eq:5}).
\end{lemma}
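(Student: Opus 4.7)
\emph{Approach.} I would view $T := V(-\Delta_p-\lambda)^{-1}$ as an integral operator on $L_p(\mh)$ with kernel $K(x,y) = V(x) G_\lambda(x,y)$ and bound its $p$-summing norm by combining a general kernel bound on $\|\cdot\|_{\Pi_p}$ with the kernel estimates (\ref{eq:21}) and (\ref{eq:25}) already proved above. The case distinction $p=2$ vs.\ $p>2$ will enter only through the exponent $q$ in which $G_\lambda(x,\cdot)$ is measured ($q=2$ in the Hilbert case, $q=p'<2$ otherwise).

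\emph{Core factorisation.} Set $h(x) := \|G_\lambda(x,\cdot)\|_{p'}$ and factor $T = M_{Vh} \circ R$, where $R: L_p(\mh) \to L_\infty(\mh)$ is the (contractive, by H\"older's inequality) integral operator
\begin{equation*}
(Rf)(x) := h(x)^{-1} \int_{\mh} G_\lambda(x,y) f(y)\, d\mu(y)
\end{equation*}
(set to zero where $h$ vanishes), and $M_{Vh}: L_\infty(\mh) \to L_p(\mh)$ is multiplication by $Vh$. The key input from the appendix (Example \ref{ex:summ}) is that multiplication operators from $L_\infty$ into $L_p$ are $p$-summing with $p$-summing norm equal to the $L_p$-norm of the multiplier. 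Combined with the ideal property of $\Pi_p$ this yields
\begin{equation*}
\|T\|_{\Pi_p}^p \leq \|M_{Vh}\|_{\Pi_p}^p\, \|R\|_{p,\infty}^p \leq \|Vh\|_p^p \leq \|V\|_p^p \sup_{x \in \mh} \|G_\lambda(x,\cdot)\|_{p'}^p.
\end{equation*}
In the special case $p=2$ one can alternatively invoke the identification of $\Pi_2$ on Hilbert space with the Hilbert--Schmidt class (Example \ref{ex:schatt}), for which the norm is just the $L_2$-norm of the kernel, and reach the same inequality via Fubini.

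\emph{Applying the kernel estimates.} In the Hilbert case $p=2$, $p'=2$, so the supremum above is $\sup_x \|G_\lambda(x,\cdot)\|_2^2$, which is exactly what (\ref{eq:21}) controls, producing the stated bound $C_0\|V\|_2^2(|z+\tfrac{1}{2}|\Re(z))^{-1}$. For $p>2$ we have $p'<2$, so (\ref{eq:25}) applies with its running exponent equal to $p'$; the identity $\gamma_{p'}=\gamma_p$ implies $\Psi_{p'}(z)=\Psi_p(z)=\lambda$, so the parametrisation matches on both sides. Using $1-1/p' = 1/p$ and raising the resulting estimate on $\|G_\lambda(x,\cdot)\|_{p'}$ to the $p$-th power produces the claimed geometric factor $C_0 (\Re(z)(\Re(z)+\tfrac{1}{2}))^{-p/p'}$.

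\emph{Main obstacle.} The only non-routine ingredient is the genuinely non-Hilbertian summing-norm bound for multiplication operators $L_\infty \to L_p$; once this is in hand (via the appendix), the proof collapses to an application of H\"older's inequality, the ideal property of $\Pi_p$, and the previously proved green-function bounds, so no further analytic work is required.
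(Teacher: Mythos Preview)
Your proof is correct and follows essentially the same route as the paper: both bound $\|V(-\Delta_p-\lambda)^{-1}\|_{\Pi_p}^p$ by $\|V\|_p^p \sup_{x}\|G_\lambda(x,\cdot)\|_{p'}^p$ and then insert the kernel estimates (\ref{eq:21}) and (\ref{eq:25}). The only difference is cosmetic: the paper cites the kernel bound for integral operators directly from \cite[Thm.~3.a.3]{MR889455}, whereas you spell out the underlying factorisation $T = M_{Vh}\circ R$ through $L_\infty$ (note that the $p$-summing property of $M_{Vh}:L_\infty\to L_p$ is standard but not actually stated in Example~\ref{ex:summ} as you claim).
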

\begin{proof}
Since $V(-\Delta_p-\lambda)^{-1}$ is an integral op. with kernel $k(x,x')=V(x)G_\lambda(x,x')$ its $p$-summing norm can be computed as follows (see, e.g., \cite[Thm.3.a.3 and its proof]{MR889455}): 
\begin{eqnarray*}
&& \|V(-\Delta_p-\lambda)^{-1}\|_{\Pi_p}^p \leq \int_\mh \mu(dx) \left(\int_\mh \mu(dx') |V(x)G_\lambda(x,x')|^{p'} \right)^{p/p'}  \\
&\leq& \|V\|_p^p \sup_{x \in \mh} \|G_\lambda(x,.)\|_{L_{p'}}^p \\
&\leq& C_0 \|V\|_p^p \cdot \left\{
  \begin{array}{cl}
 \frac 1 {|z+\frac 1 2| (\Re(z))}, & p =2 \\[6pt]
 \left( \frac 1 {\Re(z)(\Re(z) +\frac 1 2)} \right)^{p/p'}, & p > 2.
  \end{array}\right.
\end{eqnarray*}
Here in the last inequality we used (\ref{eq:21}) and (\ref{eq:25}), respectively.
\end{proof} 
Now we are going to interpolate between the results of the last two lemmas to obtain a result for $V \in L_r(\mh), p < r < \infty.$ We will need the following result of Pietsch and Triebel concerning the complex interpolation spaces of the Schatten-von Neumann and absolutely summing ideals, respectively. We refer again to Appendix \ref{ap:inter} for the notation and terminology. 
\begin{prop}[\cite{MR0243341}]\label{prop:triebel}
Let $\hil$ and $X$ denote complex Hilbert and Banach spaces, respectively. Moreover, let $p \geq 1, 0 < \theta < 1$ and define $r>p$ by $ \frac 1 r = \frac {\theta} p$.
Then the following holds:
\begin{enumerate}
    \item[(i)] $[\bdd(\hil),\mathcal S_p(\hil)]_\theta = \mathcal S_{r}(\hil)$ and $\|T\|_{[\bdd,\mathcal S_p]_\theta}=\|T\|_{\mathcal S_r}$ for $T \in \mathcal S_r(\hil)$.\\
\item[(ii)] $ [\bdd(X), \Pi_p(X)]_\theta \subset \Pi_{r,p}(X)$  and  
$$ \|T\|_{\Pi_{r,p}} \leq \|T\|_{[\bdd,\Pi_p]_\theta} \text{ for } T \in [\bdd(X),\Pi_p(X)]_\theta.$$ 
\end{enumerate}
\end{prop}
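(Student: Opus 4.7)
The plan is to treat both parts through Stein-style complex interpolation, constructing in each case a bounded analytic family $F: S \to \bdd + \Pi_p$ (resp.\ $\bdd + \mathcal S_p$) on the strip $S = \{w : 0 \le \Re(w) \le 1\}$ that realises a given operator at $w = \theta$ while having controlled ideal norms on the two boundary lines. The endpoint bounds are then combined via Hadamard's three-lines lemma. For part~(i) the structural tool is the spectral theorem for compact operators, while for part~(ii) one replaces it by the very definition of the $(r,p)$-summing norm together with a norming functional argument.

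For part~(i), to show $\mathcal S_r(\hil) \subset [\bdd(\hil), \mathcal S_p(\hil)]_\theta$ with $\|T\|_{[\bdd,\mathcal S_p]_\theta} \le \|T\|_{\mathcal S_r}$, I would take $T\in\mathcal S_r$ with polar decomposition $T=U|T|$ and singular value expansion $|T|=\sum_n s_n(T)\langle\cdot,e_n\rangle e_n$, and define
\[
F(w) \;=\; \|T\|_{\mathcal S_r}^{\,1-\tfrac{wr}{p}}\, U\sum_n \Bigl(\tfrac{s_n(T)}{\|T\|_{\mathcal S_r}}\Bigr)^{\!wr/p} \langle \cdot, e_n\rangle e_n, \qquad w\in S.
\]
Because $\theta r/p=1$ one has $F(\theta)=T$, and a direct computation shows $\|F(iy)\|_\bdd\le \|T\|_{\mathcal S_r}$ and $\|F(1+iy)\|_{\mathcal S_p}\le \|T\|_{\mathcal S_r}^{r/p}$; the geometric mean then yields the required bound. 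The opposite inclusion $[\bdd,\mathcal S_p]_\theta\subset \mathcal S_r$ I would obtain by trace duality, extending the bilinear form $(T,S)\mapsto \tr(TS)$ to an analytic function of two strip parameters and applying three-lines against a Schatten norming element $S$ with $\|S\|_{\mathcal S_{r'}}\le 1$.

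For part~(ii), given $T\in [\bdd(X),\Pi_p(X)]_\theta$ and a finite family $(x_k)_{k=1}^N\subset X$ that is normalised in the $p$-weak sense $\sup_{\|\varphi\|\le 1}\bigl(\sum_k|\varphi(x_k)|^p\bigr)^{1/p}\le 1$, I would pick norming functionals $\varphi_k \in X^*$ with $\|\varphi_k\|=1$ and $\varphi_k(Tx_k)=\|Tx_k\|$, then form the scalar analytic function
\[
G(w) \;=\; \sum_{k=1}^N c_k^{\,1-\alpha(w)}\, \varphi_k\bigl(F(w)\, x_k\bigr),
\]
where $F$ is an admissible analytic family for $T$, $\alpha$ is linear in $w$ with $\alpha(\theta)$ chosen so that $G(\theta)$ recovers $\sum_k \|Tx_k\|^r$, and the weights $c_k$ are selected (essentially $c_k=\|Tx_k\|^{r-1}$ after a suitable Lagrange-type normalisation) to make the boundary estimates meaningful. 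On $\Re(w)=0$ the bound is the trivial $\bdd$-estimate; on $\Re(w)=1$ one invokes the $p$-summing norm of $F(1+iy)$ applied to the original $x_k$'s, using $\|(x_k)\|_{\ell_p^w}\le 1$; three lines then delivers $\bigl(\sum_k\|Tx_k\|^r\bigr)^{1/r}\le \|T\|_{[\bdd,\Pi_p]_\theta}$, which is the defining inequality for $\|T\|_{\Pi_{r,p}}$.

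The main obstacle is the bookkeeping in part~(ii): one must line up the complex exponent $\alpha(w)$, the weights $c_k$, and the relation $1/r=\theta/p$ in such a way that on the right-hand boundary the expression $G(1+iy)$ genuinely fits the $p$-summing inequality (which controls $\ell_p$-sums of $\|F(1+iy)x_k\|$ by the weak $\ell_p$-norm of $(x_k)$), while at $w=\theta$ one still recovers the $\ell_r$-sum to be estimated. Unlike part~(i), there is no spectral reduction available, so the interpolation must be engineered entirely at the level of the defining inequality of the ideal; balancing the three constraints simultaneously, and verifying that the resulting $G$ is admissible for three lines, is the delicate computational core of the argument.
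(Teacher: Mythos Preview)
The paper does not prove this proposition: it is quoted from Pietsch--Triebel \cite{MR0243341} and stated without argument, so there is no ``paper's own proof'' to compare against. That said, your outline follows essentially the original Pietsch--Triebel strategy.

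For part~(i) your idea is the standard one, but the formula as written has a slip: with your normalising factor one finds $F(\theta)=T/\|T\|_{\mathcal S_r}$ rather than $T$, and your stated boundary value $\|F(1+iy)\|_{\mathcal S_p}\le\|T\|_{\mathcal S_r}^{r/p}$ is not what the computation gives. The clean version is to assume $\|T\|_{\mathcal S_r}=1$ by homogeneity and take $F(w)=U\sum_n s_n^{rw/p}\langle\cdot,e_n\rangle e_n$; then $F(\theta)=T$, $\|F(iy)\|_{\bdd}\le 1$, and $\|F(1+iy)\|_{\mathcal S_p}=(\sum_n s_n^r)^{1/p}=1$. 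One also has to be a little careful that $F$ really lies in $\mathcal G(\bdd,\mathcal S_p)$ (continuity on the left edge is not automatic for the infinite sum); the usual fix is a finite-rank approximation.

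For part~(ii) your plan is correct but you stop short of the one computation that makes it work. With $\alpha_k:=\|Tx_k\|$, normalised so that $\sum_k\alpha_k^r=1$, and norming functionals $\varphi_k$, the right analytic function is
\[
G(w)=\sum_k \alpha_k^{\,r(1-w/p)}\,\varphi_k\bigl(F(w)x_k\bigr).
\]
Then $G(\theta)=\sum_k\alpha_k^{r-1}\alpha_k=1$ since $\theta/p=1/r$; on $\Re(w)=0$ one uses $\|x_k\|\le 1$ (which follows from the weak-$\ell_p$ normalisation) to get $|G(iy)|\le\sum_k\alpha_k^r\cdot\|F(iy)\|\le 1$; and on $\Re(w)=1$ H\"older and the $p$-summing inequality give $|G(1+iy)|\le(\sum_k\alpha_k^r)^{1/p'}\|F(1+iy)\|_{\Pi_p}\le 1$. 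Three lines then yields the claim. So your ``delicate computational core'' reduces to choosing the single exponent $r(1-w/p)$; once that is written down there is nothing left to balance.
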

\begin{rem}
  We recall that for $p=2$ and $r \geq 2$ we have
  $\Pi_{r,2}(\hil)=S_r(\hil)$ and $\|.\|_{\Pi_{r,2}}=\|.\|_{\mathcal S_r}$, see \cite[Prop. 2.11.28]{MR917067}.
\end{rem} 

\begin{thm}\label{thm:6}
  Let $2 \leq p \leq r < \infty$, $z \in \mc_+$ and $\lambda=\Psi_p(z)=1/4-(z+\gamma_p)^2$. If $V \in L_r(\mh)$, then $V(-\Delta_p-\lambda)^{-1} \in \Pi_{r,p}(L_p(\mh))$ and
{\small
  \begin{eqnarray*}
\|  V(-\Delta_p-\lambda)^{-1}\|_{\Pi_{r,p}} 
&\leq& C_0^{1/r} \|V\|_r \cdot \left\{ 
  \begin{array}{cl}
\left(\frac 1 {\dist(\lambda,[1/4,\infty))}\right)^{1-2/r} \left(\frac 1 {|z+\frac 1 2| (\Re(z))} \right)^{1/r}, & p=2, \\[6pt]
    \left(\frac 1 {\Re(z)}\right)^{1+2/p-3/r} \left( \frac 1 {\Re(z) + \frac 1 2} \right)^{1-2/p+1/r}, & p > 2,
  \end{array}\right.
  \end{eqnarray*}
} 
where $C_0$ was defined in (\ref{eq:5}). 
\end{thm}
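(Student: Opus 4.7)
The plan is to interpolate between the two endpoint estimates from Lemmas~\ref{lem:4} and~\ref{lem:5}, applied to the linear map
$$ \mathcal T : V \mapsto V(-\Delta_p-\lambda)^{-1},$$
viewed simultaneously as bounded from $L_\infty(\mh)$ into $\bdd(L_p(\mh))$ with norm $M_0$ (the bound provided by Lemma~\ref{lem:4}) and from $L_p(\mh)$ into $\Pi_p(L_p(\mh))$ with norm $M_1$ (obtained by extracting the $p$-th root in Lemma~\ref{lem:5}).

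For fixed $V \in L_r(\mh)$ and $\theta = p/r \in (0,1]$, I would introduce the standard analytic family
$$ V_w := |V|^{wr/p}\, \operatorname{sgn}(V), \quad w \in \overline{S}, $$
which realizes $L_r(\mh)$ as the complex interpolation space $[L_\infty,L_p]_\theta$: one checks $V_\theta = V$, $\|V_{iy}\|_\infty \leq 1$ and $\|V_{1+iy}\|_p = \|V\|_r^{r/p}$. Composing with the (fixed) resolvent yields the operator-valued family $T_w := V_w (-\Delta_p-\lambda)^{-1}$, to which Stein's interpolation theorem (Theorem~\ref{thm:stein}) applies with boundary bounds $\|T_{iy}\|_{\bdd} \leq M_0$ and $\|T_{1+iy}\|_{\Pi_p} \leq M_1 \|V\|_r^{r/p}$.

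Proposition~\ref{prop:triebel} then identifies (respectively embeds) the intermediate space $[\bdd(L_p(\mh)), \Pi_p(L_p(\mh))]_\theta$ as $\mathcal S_r$ when $p=2$, and as a subspace of $\Pi_{r,p}$ when $p>2$, in both cases with the exponent relation $1/r=\theta/p$, consistent with our choice $\theta = p/r$. Combining the two ingredients yields
$$ \|T_\theta\|_{\Pi_{r,p}} \leq M_0^{1-\theta}\, M_1^{\theta}\, \|V\|_r^{r\theta/p} = M_0^{1-\theta}\, M_1^{\theta}\, \|V\|_r. $$
Substituting the explicit forms of $M_0$ and $M_1$ from the two lemmas and --- only in the case $p>2$ --- using the elementary estimate $(1+\Re(z))^{-\alpha}\leq (\Re(z)+\tfrac 1 2)^{-\alpha}$ for $\alpha \geq 0$ to merge the mismatched factors $(1+\Re(z))^{-(1-2/p)(1-p/r)}$ and $(\Re(z)+\tfrac{1}{2})^{-(p-1)/r}$ into a single power of $\Re(z)+\tfrac 1 2$, produces the asserted inequality after a short exponent computation (the key collapses being $(1+2/p)(1-p/r) + (p-1)/r = 1+2/p-3/r$ and $(1-2/p)(1-p/r) + (p-1)/r = 1-2/p+1/r$).

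The main obstacle I anticipate is technical rather than conceptual: one must verify that the operator-valued family $T_w$ meets the admissibility hypotheses of the version of Stein's theorem quoted in Appendix~\ref{ap:inter}, which requires placing both $\bdd(L_p(\mh))$ and $\Pi_p(L_p(\mh))$ inside a common ambient Banach space and checking that $w\mapsto \langle T_w f, g\rangle$ is analytic and uniformly bounded on $\overline{S}$ for a sufficiently rich class of $f,g$. Once the interpolation framework is in place, the remainder is bookkeeping.
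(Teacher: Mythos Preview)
Your approach matches the paper's: complex interpolation between Lemmas~\ref{lem:4} and~\ref{lem:5} via the analytic family $w\mapsto V^{rw/p}(-\Delta_p-\lambda)^{-1}$, followed by Proposition~\ref{prop:triebel} to pass from $[\bdd,\Pi_p]_\theta$ to $\Pi_{r,p}$. The paper sidesteps the technical obstacle you flag by first reducing (via density) to nonnegative simple $V$ and then applying Proposition~\ref{prop:inter} directly to the operator-valued function $f\in\mathcal G(\bdd(L_p),\Pi_p(L_p))$, rather than Theorem~\ref{thm:stein}, which as stated handles only $L_{p}$-to-$L_{q}$ families.
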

In particular, the theorem shows that for $V \in L_r(\mh)$ the operator of multiplication by $V$ is $-\Delta_p$-compact. This was used in Section \ref{sec:32}. 
\begin{proof}
A density argument shows that it is sufficient to consider the case where $V$ is a nonnegative simple function. For such a $V$ define 
$$f: S \to \bdd(L_p(\mh)) + \Pi_p(L_p(\mh)), \qquad f(w)=V^{\frac r p w}(-\Delta_p-\lambda)^{-1},$$
where as above $S = \{ w : 0 \leq \Re(w) \leq 1\}$. From what we have shown above we infer that $f$ is continuous and bounded on $S$ and holomorphic in the interior of $S$. Moreover, since $\|V^{\frac r p iy}\|_{\infty} \leq 1$ we obtain from Lemma \ref{lem:4} that
\begin{eqnarray*}
  A_0:= \sup_{y \in\mr} \|f(iy)\|_{p,p} \leq \left\{
  \begin{array}{cl}
    \frac 1 {\dist(\lambda,[1/4,\infty))}, & p=2, \\[6pt]
    \frac 1 {(\Re(z))^{1+\frac 2 p}(1+\Re(z))^{1-\frac 2 p}}, & p > 2.
  \end{array}\right.      
\end{eqnarray*}
Furthermore, Lemma \ref{lem:5} implies that
\begin{eqnarray*}
  A_1^p &:=& \sup_{y \in \mr} \|f(1+iy)\|_{\Pi_p(L_p)}^p \leq C_0 \cdot \|V^{\frac r p}\|_p^p \cdot \left\{
  \begin{array}{cl}
 \frac 1 {|z+\frac 1 2| (\Re(z))}, & p =2 \\[6pt]
 \left( \frac 1 {\Re(z)(\Re(z) +\frac 1 2)} \right)^{p/p'}, & p > 2,
  \end{array}\right.
\end{eqnarray*}
and here $\|V^{\frac r p}\|_p^p = \|V\|_r^r$. But then Proposition \ref{prop:inter} and Proposition \ref{prop:triebel} imply that with $1/r= \theta / p$ (i.e. $f(\theta)= V(-\Delta_p-\lambda)^{-1}$)
{\small
\begin{eqnarray*}
&&  \|  V(-\Delta_p-\lambda)^{-1}\|_{\Pi_{r,p}(L_p(\mh))} 
\leq \|  V(-\Delta_p-\lambda)^{-1}\|_{[\bdd,\Pi_p]_\theta} 
\leq A_0^{(1-\theta)}A_1^{\theta} \\
&\leq& C_0^{1/r} \|V\|_r\left\{
  \begin{array}{cl}
    \left(\frac 1 {\dist(\lambda,[1/4,\infty))}\right)^{1-2/r} \left(\frac 1 {|z+\frac 1 2| (\Re(z))} \right)^{1/r}, & p=2, \\[6pt]
    \left(\frac 1 {(\Re(z))^{1+\frac 2 p}(1+\Re(z))^{1-\frac 2 p}}\right)^{1-p/r} \left( \frac 1 {\Re(z)(\Re(z) +\frac 1 2)} \right)^{p/(rp')}, & p > 2. 
  \end{array}\right. 
\end{eqnarray*} 
}
Now a rearrangement of terms, using the estimate $\Re(z)+1 > \Re(z) + \frac 1 2$, concludes the proof.
\end{proof}
The previous theorem will be used to prove the results in Section \ref{sec:21}. To prove the results in Section \ref{sec:22} we will use the following corollary. 
\begin{cor}\label{cor:est} 
  Let $2 \leq p \leq r < \infty$, $z \in \mc_+$ and $\lambda=\Psi_p(z)$. If $V \in L_r(\mh)$, then $V(-\Delta_p-\lambda)^{-1} \in \Pi_{r,p}(L_p(\mh))$ and
{\small
  \begin{eqnarray*}
\|  V(-\Delta_p-\lambda)^{-1}\|_{\Pi_{r,p}} 
&\leq&  2^{1-2/p} \|V\|_r \left\{ 
  \begin{array}{cl}
    \left(\frac 1 {|z|}\right)^{1-2/r}  \left( \frac 1 {\Re(z)} \right)^{1-\frac 1 r}, & p=2, \\[6pt]
    \left(\frac 1 {\Re(z)}\right)^{1+2/p-3/r}, & p > 2.
  \end{array}\right.
  \end{eqnarray*}
}
\end{cor}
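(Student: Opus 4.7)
The plan is to derive Corollary \ref{cor:est} directly from Theorem \ref{thm:6} by applying Lemma \ref{lem:41} (in the case $p=2$) and by simplifying the remaining factors involving $|z+1/2|$ and $\Re(z)+1/2$ via elementary inequalities. The only numerical input needed is that $2C_0 < 1$, which follows from the explicit value $C_0 \approx 0.216$ recorded in (\ref{eq:5}); this is what will allow the $C_0$-dependent prefactor of Theorem \ref{thm:6} to be absorbed cleanly into the constant $2^{1-2/p}$ of the corollary.

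For $p=2$, I start with the bound of Theorem \ref{thm:6} and use part (i) of Lemma \ref{lem:41} to replace $\dist(\lambda,[1/4,\infty))^{-(1-2/r)}$ by $(|z|\Re(z))^{-(1-2/r)}$. For the remaining factor, the chain of inequalities $|z+1/2| \geq \Re(z)+1/2 \geq 1/2$ (valid since $z \in \mc_+$) yields $(|z+1/2|\Re(z))^{-1/r} \leq (2/\Re(z))^{1/r}$. Multiplying these two estimates, the $\Re(z)$-exponents combine to $-(1-1/r)$, the $|z|$-exponent becomes $-(1-2/r)$, and the numerical prefactor is $C_0^{1/r} \cdot 2^{1/r} = (2C_0)^{1/r} \leq 1 = 2^{1-2/p}$, as claimed.

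For $p > 2$, only the factor $(\Re(z)+1/2)^{-(1-2/p+1/r)}$ of Theorem \ref{thm:6} needs simplification. Since the exponent $1-2/p+1/r$ is positive and $\Re(z)+1/2 \geq 1/2$, this factor is bounded above by $2^{1-2/p+1/r}$. The overall numerical prefactor then becomes $C_0^{1/r} \cdot 2^{1-2/p+1/r} = (2C_0)^{1/r} \cdot 2^{1-2/p} \leq 2^{1-2/p}$, once more by $2C_0 < 1$, while the $\Re(z)$-dependent factor is already in the claimed form $(1/\Re(z))^{1+2/p-3/r}$.

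The argument is essentially just careful bookkeeping of exponents, in the same spirit as the final rearrangement already carried out in the proof of Theorem \ref{thm:6}. No step presents a genuine obstacle; the numerical inequality $2C_0 < 1$ is the only quantitative input, and it is what makes the simpler constant $2^{1-2/p}$ in the statement of the corollary achievable.
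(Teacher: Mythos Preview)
Your proposal is correct and follows essentially the same approach as the paper: apply Theorem \ref{thm:6}, use Lemma \ref{lem:41}(i) for the distance estimate when $p=2$, bound $|z+1/2|\geq 1/2$ and $\Re(z)+1/2\geq 1/2$, and absorb the constants via $2C_0\leq 1$. Your write-up is somewhat more detailed in tracking the exponents, but the ingredients and the logic are identical.
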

\begin{proof}
The case $p=2$ follows using Lemma \ref{lem:41} to estimate $\dist(\lambda,[1/4,\infty)) \geq |z|\Re(z)$, together with the estimate $|z+1/2| \geq 1/2$ for $z \in \mc_+,$ and the fact that $2C_0 \leq 1$. The case $p>2$ follows in the same way using that  $\Re(z)+\frac 1 2 \geq \frac 1 2$ and $C_0^{1/r}2^{1-2/p+1/r} \leq 2^{1-2/p}$.
\end{proof}

\section{Proof of Theorem \ref{thm:1} and \ref{thm:2}} \label{sec:5}

Let $p \geq 2$ and let $\lambda \in \Sigma_p^c=\rho(-\Delta_p)$. Then by the Birman-Schwinger principle $\lambda$ is an eigenvalue of $H_p=-\Delta_p+V$ if and only if $-1$ is an eigenvalue of $V(-\Delta_p-\lambda)^{-1}$. Hence in this case we obtain  for $r \geq p$ that
\begin{equation}
1 \leq \|V(-\Delta_p-\lambda)^{-1}\| \leq \|  V(-\Delta_p-\lambda)^{-1}\|_{\Pi_{r,p}}.\label{eq:29}
\end{equation}
Now we use Theorem \ref{thm:6} to estimate the right-hand side from above and we rearrange terms. We distinguish between two cases:

(i) In case $p=2$ we obtain with $z=\Psi_2^{-1}(\lambda)=\sqrt{1/4-\lambda}$
\begin{equation}
(\dist(\lambda,[1/4,\infty)))^{r-2}|z+1/2|\Re(z) \leq C_0 \|V\|_r^r.\label{eq:30}
\end{equation}
A short calculation shows that, since $ z \in \mc_+$, we have $|z+1/2| \geq 1/\sqrt{2} \cdot (|z|+1/2)$. Hence, using Lemma \ref{lem:41} we see that the  left-hand side of (\ref{eq:30}) can be bounded from below as follows:
\begin{eqnarray}
&& (\dist(\lambda,[1/4,\infty)))^{r-2}|z+1/2|\Re(z) \nonumber \\
&\geq& 1/\sqrt{2}\cdot (\dist(\lambda,[1/4,\infty)))^{r-2}(|z|+1/2)\Re(z) \nonumber \\
&=& 1/\sqrt{2}\cdot (\dist(\lambda,[1/4,\infty)))^{r-2} (1+1/(2|z|)) |z| \Re(z) \nonumber \\
&\geq& 1/(2\sqrt{2}) \cdot (\dist(\lambda,[1/4,\infty)))^{r-1} |1+1/(2|1/4-\lambda|^{1/2})| \label{eq:x}.
\end{eqnarray}  
But (\ref{eq:30}) and (\ref{eq:x}) show the validity of (\ref{eq:4}) and conclude the proof of Theorem \ref{thm:1}

(ii) In case $p > 2$ we obtain from (\ref{eq:29}) and Theorem \ref{thm:6} that
\begin{equation}
  \label{eq:31}
  (\Re(z))^{2r-2}(1+1/(2\Re(z)))^{r(1-2/p)+1} \leq C_0 \|V\|_r^r.
\end{equation}
Now we use Lemma \ref{lem:41} and (\ref{eq:14}) to estimate the left-hand side from below by
$$\left( \frac 1 {16} \frac{\dist(\lambda,\Sigma_p)}{|1/4-\lambda|^{1/2}} \right)^{2r-2} \left( 1 + \frac{|1/4-\lambda|^{1/2}}{8 \dist(\lambda,\Sigma_p)}\right)^{r(1-2/p)+1}.$$
This shows the validity of (\ref{eq:6}) in case $p > 2$. Finally, the case $1<p<2$ follows by duality using Proposition \ref{prop:1}. This concludes the proof of Theorem \ref{thm:2}.

\section{An abstract Lieb-Thirring estimate} \label{sec:6}

The Theorems \ref{thm:3} and \ref{thm:4} will be proved using the following abstract result. Here we use terminology from \cite{Hansmann15}, which is reviewed in Appendix \ref{ap:ideals}. Moreover, $x_+=\max(x,0)$ denotes the positive part of $x \in \mr$.

\begin{thm}\label{thm:61}
Let $X$ denote a complex Banach space, let $r \geq 1$  and let $(\mathcal I,\|.\|_{\mathcal I})$ be an $l_r$-ideal in $\bdd(X)$. Moreover, let $Z_0$ and $Z=Z_0+M$ denote closed operators in $X$ such that
\begin{itemize}
\item[-] for some $p \in [1,\infty)$ we have $\sigma(Z_0)=\Sigma_p$ as defined in (\ref{eq:1}),
\item[-] there exist $\alpha, \beta,\gamma \geq 0$ and $C_1> 0$ such that for all $z \in \mc_+$ and $\Psi_p(z)$ as defined in (\ref{eq:13}):
\begin{equation}
    \label{eq:32}
  \|M(Z_0-\Psi_p(z))^{-1}\|_{\mathcal{I}} \leq C_1\cdot\Re(z)^{-\alpha}  \cdot |z|^{-\beta}
\end{equation}
and 
\begin{equation}
    \label{eq:33}
  \|(Z_0-\Psi_p(a))^{-1}\| \leq a^{-\gamma}, \qquad a>0.
\end{equation}
\end{itemize}
Finally, let $\tau > 0$ and set  
\begin{eqnarray*}
  \delta_1&:=& r\alpha + 1+ \tau, \\
  \delta_2 &:=& (r\beta-1+\tau)_+, \\
  \delta_3 &:=& r(1-\alpha-\beta-\gamma).
\end{eqnarray*} 
Then there exist constants $C$ and $C'$, both depending on $\alpha,\beta,\gamma,r$ and $\tau$, such that 
{\small
  \begin{equation}
   \sum_{|\lambda- \frac 1 {pp'}|^{\frac 1 2} \leq (2C_1)^{\frac 1 {\alpha+\beta}}}  \frac{\dist(\lambda,\Sigma_p)^{\delta_1} |\lambda-\frac 1 {pp'}|^{\delta_2}}{(|\lambda- \frac 1 {pp'}|^{\frac 1 2}+2\gamma_p)^{\delta_1+\delta_2}}
\leq C \cdot C_1^{r+\frac{\delta_1+\delta_2+\delta_3}{\alpha+\beta}}(C_1^{\frac 1 {\alpha + \beta}}+\gamma_p)^{r}\label{eq:34}
\end{equation}
}
and
{\small
  \begin{equation}
 \sum_{|\lambda- \frac 1 {pp'}|^{\frac 1 2}>(2C_1)^{\frac 1 {\alpha+\beta}}}  \frac{\dist(\lambda,\Sigma_p)^{\delta_1} |\lambda-\frac 1 {pp'}|^{\delta_2}}{(|\lambda-\frac 1 {pp'}|^{\frac 1 2}+2\gamma_p)^{2\delta_1+2\delta_2 + \delta_3+r+\tau}} 
\leq C' \cdot C_1^{r}(C_1^{\frac 1 {\alpha+\beta}}+\gamma_p)^{-\tau}
.\label{eq:35}
\end{equation} 
}
Here in both sums we are summing over all eigenvalues $\lambda \in \sigma_d(Z)\cap \Sigma_p^c$ satisfying the stated restrictions, each eigenvalue being counted according to its algebraic multiplicity. Moreover, $\gamma_p$ is as defined in (\ref{eq:7}).
\end{thm}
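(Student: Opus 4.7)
The plan is to implement the Borichev--Golinskii--Kupin (BGK) determinant method: construct a perturbation determinant whose zeros encode the eigenvalues of $Z$, bound its growth on $\mc_+$, and invoke a boundary growth theorem for holomorphic functions on the disk. Using the $l_r$-ideal determinant theory of \cite{Hansmann15} recalled in Appendix \ref{ap:ideals}, define
$$d(z):=\det_{\mathcal I}\bigl(I + M(Z_0 - \Psi_p(z))^{-1}\bigr), \qquad z \in \mc_+.$$
Then $d$ is holomorphic on $\mc_+$, and by the Birman--Schwinger principle its zeros there correspond, with matching algebraic multiplicities, to the points $\Psi_p^{-1}(\lambda)$ with $\lambda \in \sigma_d(Z)\cap \Sigma_p^c$. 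The standard $l_r$-ideal determinant inequality combined with (\ref{eq:32}) produces the upper bound
$$\log|d(z)| \leq C_r\, \|M(Z_0-\Psi_p(z))^{-1}\|_{\mathcal I}^{\,r} \leq C_r\, C_1^r\, \Re(z)^{-r\alpha}\,|z|^{-r\beta}.$$

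Next I would choose a normalization point and transport to the unit disk. Set $a_0 := (2C_1)^{1/(\alpha+\beta)}$, so that (\ref{eq:32}) gives $\|M(Z_0-\Psi_p(a_0))^{-1}\|_{\mathcal I}\leq 1/2$; a standard estimate then provides $|d(a_0)|\geq c_0>0$ with $c_0$ depending only on $r$. Transport via the M\"obius map $\phi(w) := a_0(1+w)/(1-w)$, which conformally sends $\md$ onto $\mc_+$ with $\phi(0)=a_0$, $\phi(-1)=0$, $\phi(1)=\infty$. Using $\Re\phi(w) = a_0(1-|w|^2)/|1-w|^2$ and $|\phi(w)| = a_0\,|1+w|/|1-w|$, together with $|1-w|\leq 2$ and $1-|w|^2 \geq 1-|w|$, the growth bound transfers to
$$\log|d(\phi(w))| \leq \frac{C_r\, C_1^r}{a_0^{r(\alpha+\beta)}}\cdot\frac{1}{(1-|w|)^{r\alpha}|1+w|^{r\beta}} = \frac{C}{(1-|w|)^{r\alpha}|1+w|^{r\beta}},$$
the second equality being precisely the point of the choice of $a_0$, since then $a_0^{r(\alpha+\beta)} = (2C_1)^r$. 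The BGK theorem of \cite{MR2481997}, applied with boundary weight $r\alpha$ and a single distinguished boundary singularity at $\zeta_0 = -1$ of order $r\beta$, then yields
$$\sum_{j}(1-|w_j|)^{r\alpha+1+\tau}\,|w_j+1|^{(r\beta-1+\tau)_+} \leq C_\tau,$$
summed over the zeros $w_j$ of $d\circ\phi$ in $\md$.

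It remains to translate this disk estimate back to a sum over eigenvalues $\lambda_j \in \sigma_d(Z) \cap \Sigma_p^c$. For $z_j := \phi(w_j)$ one computes directly
$$1-|w_j| \asymp \frac{a_0\,\Re(z_j)}{|z_j+a_0|^2}, \qquad |w_j+1| \asymp \frac{|z_j|}{|z_j+a_0|},$$
which naturally suggests splitting the sum according to $|z_j|\leq a_0$ (where $|z_j+a_0|\asymp a_0$) versus $|z_j|>a_0$ (where $|z_j+a_0|\asymp |z_j|$). With $\lambda_j = \Psi_p(z_j)$, the identity $\lambda-\tfrac{1}{pp'} = -z(z+2\gamma_p)$ together with Lemma \ref{lem:41}(ii) lets one rewrite $\Re(z_j)$ and the $|z_j+a_0|$-factors in terms of $\dist(\lambda_j,\Sigma_p)$ and $|\lambda_j - 1/(pp')|^{1/2}$; meanwhile the residual $|z_j+2\gamma_p|$-factors are what produce the denominator terms $(|\lambda-1/(pp')|^{1/2}+2\gamma_p)^{\cdot}$ in (\ref{eq:34})--(\ref{eq:35}). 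The split condition $|z_j|\leq a_0$ becomes (up to absorbable constants) $|\lambda_j-1/(pp')|^{1/2} \leq (2C_1)^{1/(\alpha+\beta)}$, matching the statement.

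The main obstacle is the exponent bookkeeping in this final translation. One has to follow each power of $a_0 = (2C_1)^{1/(\alpha+\beta)}$ that is released when $|z_j+a_0|$ is replaced by $a_0$ or $|z_j|$, and to see how these combine with the powers $\delta_1,\delta_2,\delta_3$ to yield exactly the composite exponent $r + (\delta_1+\delta_2+\delta_3)/(\alpha+\beta)$ on $C_1$ and the factor $(C_1^{1/(\alpha+\beta)}+\gamma_p)^r$ on the right-hand side of (\ref{eq:34}). A secondary, more conceptual difficulty is justifying the Birman--Schwinger step in the Banach-space setting: that the zeros of $d$ genuinely agree, with their multiplicities, with the eigenvalues of $Z$ in $\Sigma_p^c$ together with their algebraic multiplicities. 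This is precisely what the determinant theory of \cite{Hansmann15} reviewed in Appendix \ref{ap:ideals} is designed to give.
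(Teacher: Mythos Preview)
Your overall architecture is right --- build a perturbation determinant, bound its growth, push to the disk, apply BGK, and translate back --- but the specific implementation misses a key mechanism, and as a result cannot produce the exponents in the theorem as stated. Notice that your argument never touches assumption~(\ref{eq:33}); the parameter $\gamma$, and hence $\delta_3=r(1-\alpha-\beta-\gamma)$, simply do not appear anywhere in your chain of inequalities. Your BGK output is $\sum (1-|w_j|)^{\delta_1}|1+w_j|^{\delta_2}\le C$, and for $|z_j|>a_0$ this translates to a summand comparable to $\Re(z_j)^{\delta_1}/|z_j|^{2\delta_1}$. The theorem's summand in (\ref{eq:35}), for large $|z|$, is comparable to $\Re(z)^{\delta_1}/|z|^{\delta_1+\delta_3+r+\tau}$, and in the intended applications $\delta_1+\delta_3+r+\tau<2\delta_1$ strictly. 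So your inequality is genuinely weaker and does not imply (\ref{eq:35}); no amount of exponent bookkeeping will manufacture $\delta_3$ from an argument that ignores~(\ref{eq:33}).

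The paper resolves this by constructing the determinant differently. Instead of $\det_{\mathcal I}(I+M(Z_0-\lambda)^{-1})$, it fixes $b=\Psi_p(a)$ with $a\ge(2C_1)^{1/(\alpha+\beta)}$, passes to the bounded operator $A=(Z_0-b)^{-1}$, and uses the determinant of Appendix~\ref{ap:ideals} for the pair $(A,K)$ with $K=(Z-b)^{-1}-(Z_0-b)^{-1}$. This has two payoffs you are missing. First, the normalization is exact: $D(b)=1$, so no lower bound on the determinant is needed (your ``standard estimate'' $|d(a_0)|\ge c_0$ is not obviously available for general $l_r$-ideals), and the zero/multiplicity correspondence is covered directly by the framework in the appendix, which is stated only for bounded $A$. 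Second, and crucially, the resulting growth bound picks up a factor $|\Psi_p(z)-\Psi_p(a)|^r\|(Z-b)^{-1}\|^r\le C|\Psi_p(z)-\Psi_p(a)|^r a^{-r\gamma}$; since $|\Psi_p(z)-\Psi_p(a)|\sim|z|^2$ at infinity, this introduces a controlled singularity of the disk bound at the boundary point corresponding to $z=\infty$, and it is precisely this singularity that, after BGK and a further integration over the free parameter $a\in[\,(2C_1)^{1/(\alpha+\beta)},\infty)$, produces the $\delta_3$-dependent exponents in (\ref{eq:35}). In short: you need the resolvent-based determinant to bring $\gamma$ into play and a one-parameter family of such determinants (not a single $a_0$) to get the large-eigenvalue estimate.
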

In the remainder of this section we are going to prove the previous theorem. We start with a lemma providing a resolvent norm estimate on $Z=Z_0+M$.

\begin{lemma}\label{lem:62} 
Given assumptions (\ref{eq:32}) and (\ref{eq:33}) we have
for all  
\begin{equation}
 a \geq (2C_1)^{1/(\alpha + \beta)}\label{eq:36}
\end{equation}
that $\Psi_p(a) \in \rho(Z)$ and 
\begin{equation}
  \label{eq:37}
    \|(Z-\Psi_p(a))^{-1}\| \leq 2 a^{-\gamma}.
\end{equation}
\end{lemma}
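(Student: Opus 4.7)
My plan is to prove this via the standard Birman--Schwinger type factorization together with a Neumann series argument, which is the natural tool given the structure of the hypotheses.

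The first step is to specialize assumption (\ref{eq:32}) to the case $z=a>0$, where $\Re(z)=|z|=a$, so that
\[
  \|M(Z_0-\Psi_p(a))^{-1}\|_{\mathcal I} \leq C_1 \, a^{-(\alpha+\beta)}.
\]
Since $(\mathcal I,\|\cdot\|_{\mathcal I})$ is an ideal in $\bdd(X)$ its ideal norm dominates the operator norm, so the same bound holds for $\|M(Z_0-\Psi_p(a))^{-1}\|$. Substituting the hypothesis $a\geq (2C_1)^{1/(\alpha+\beta)}$ yields
\[
  \|M(Z_0-\Psi_p(a))^{-1}\| \leq \tfrac{1}{2}.
\]

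Next, I invoke the Neumann series: the operator $I+M(Z_0-\Psi_p(a))^{-1}$, acting on $X$, is boundedly invertible, with
\[
  \bigl\|\bigl(I+M(Z_0-\Psi_p(a))^{-1}\bigr)^{-1}\bigr\|
  \leq \frac{1}{1-1/2}=2.
\]
Combined with the factorization
\[
  Z-\Psi_p(a) \;=\; Z_0+M-\Psi_p(a) \;=\; \bigl(I+M(Z_0-\Psi_p(a))^{-1}\bigr)(Z_0-\Psi_p(a)),
\]
which is valid on $\dom(Z_0)=\dom(Z)$ (here $M$ is $Z_0$-bounded since $M(Z_0-\Psi_p(a))^{-1}\in\bdd(X)$), this shows that $Z-\Psi_p(a)$ is a bijection of $\dom(Z_0)$ onto $X$ with bounded inverse, hence $\Psi_p(a)\in\rho(Z)$ and
\[
  (Z-\Psi_p(a))^{-1}=(Z_0-\Psi_p(a))^{-1}\bigl(I+M(Z_0-\Psi_p(a))^{-1}\bigr)^{-1}.
\]

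Finally, taking operator norms and plugging in the bound from (\ref{eq:33}) with the specialization to $z=a$ gives
\[
  \|(Z-\Psi_p(a))^{-1}\| \leq \|(Z_0-\Psi_p(a))^{-1}\|\cdot 2 \leq 2 a^{-\gamma},
\]
which is exactly (\ref{eq:37}). There is no serious obstacle here: the proof is a straightforward perturbation argument, and the only mild technicality is verifying that the ideal norm in $\mathcal I$ dominates the operator norm so that the Neumann series may be applied on $X$ itself; this is built into the definition of an operator ideal.
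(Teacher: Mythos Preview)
Your proof is correct and follows essentially the same route as the paper's own argument: both use the factorization $Z-\Psi_p(a)=(I+M(Z_0-\Psi_p(a))^{-1})(Z_0-\Psi_p(a))$, bound the ideal norm of $M(Z_0-\Psi_p(a))^{-1}$ by $C_1 a^{-(\alpha+\beta)}\leq 1/2$ via (\ref{eq:32}) and the ideal property, invert the first factor by a Neumann series, and then apply (\ref{eq:33}). Your write-up is in fact slightly more explicit about domain considerations than the paper's.
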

\begin{proof}
Since for $a>0$ we have $\Psi_p(a) \in \Sigma_p^c=\rho(Z_0)$ we can write
\begin{equation}
Z-\Psi_p(a) = Z_0+M-\Psi_p(a) = (I+M(Z_0-\Psi_p(a))^{-1})(Z_0-\Psi_p(a)).\label{eq:38}
\end{equation}
By assumption (\ref{eq:32})
\begin{eqnarray*}
  \|M(Z_0-\Psi_p(a))^{-1}\| \leq   \|M(Z_0-\Psi_p(a))^{-1}\|_{\mathcal I}
\leq C_1 a^{-\alpha-\beta}.
\end{eqnarray*}
Hence we see that for
$a \geq (2C_1)^{1/(\alpha + \beta)}$ the operator $I+M(Z_0-\Psi_p(a))^{-1}$ is invertible with norm of the inverse being at most $2$. But then also $Z-\Psi_p(a)$ is invertible and using (\ref{eq:33}) and (\ref{eq:38}) we obtain
\begin{eqnarray*}
  \|(Z-\Psi_p(a))^{-1}\| \leq 2 \|(Z_0-\Psi_p(a))^{-1}\| \leq 2 a^{-\gamma}.
\end{eqnarray*}  
\end{proof}
Now for a shorter notation let us set 
$$ b= \Psi_p(a)$$   
with some $a$ satisfying (\ref{eq:36}). Then $b \in \rho(Z) \cap \rho(Z_0)$ and 
$$ K:=(Z-b)^{-1} - (Z_0-b)^{-1} = -(Z-b)^{-1}M(Z_0-b)^{-1} \in \mathcal I.$$
Let us set $A=(Z_0-b)^{-1}$ and $B:=A+K:=(Z-b)^{-1}$. By the spectral mapping theorem
$$ \lambda \in \rho(Z_0)\setminus \{b\} \quad \Leftrightarrow \quad (\lambda-b)^{-1} \in \rho(A)$$
(and a similar result holds for $Z$ and $B$). From \cite[Theorem 4.10]{Hansmann15}, see Appendix \ref{ap:ideals}, we know that there exists a holomorphic function
$d : \rho(A) \to \mc$ with the following properties:
\begin{enumerate}
    \item[(p1)] $\lim_{|u| \to \infty} d(u)=1$,
    \item[(p2)] for $u \in \rho(A)$ we have
  \begin{equation*}
    |d(u)| \leq \exp\left( \mu_r^r \Gamma_r \|K(u-A)^{-1}\|_{\mathcal I}^r\right),
  \end{equation*}
 where $\mu_r$ denotes the eigenvalue constant of $\mathcal I$ and $\Gamma_r$ is a universal $r$-dependent constant, see \cite{Han17a},
\item[(p3)] $d(u)=0$ iff $u \in \sigma(A+K)$,
\item[(p4)] if $u \in \rho(A) \cap \sigma_d(A+K)$, then its algebraic multiplicity (as an eigenvalue) coincides with its order as a zero of $d$.
\end{enumerate}
Using the spectral mapping theorem again we see that
\begin{equation*} 
  D(\lambda):=d((\lambda-b)^{-1})
\end{equation*}
is well-defined and analytic on $\rho(Z_0) \setminus \{b\}$ and, by (p1), can be analytically extended to $\rho(Z_0)=\Sigma_p^c$ by setting $D(b)=1$. Moreover, by spectral mapping and (p3) and (p4) we know that $D(\lambda)=0$ iff $\lambda \in \sigma(Z)$ and if $\lambda \in \rho(Z_0) \cap \sigma_d(Z)$, then its algebraic multiplicity coincides with its order as a zero of $D$. Finally, since
$$((\lambda-b)^{-1}-A)^{-1}=((\lambda-b)^{-1}-(Z_0-b)^{-1})^{-1}=(\lambda-b)(Z_0-b)(Z_0-\lambda)^{-1}$$
we see that
\begin{eqnarray*}
 K((\lambda-b)^{-1}-A)^{-1} &=& ((Z-b)^{-1}-(Z_0-b)^{-1})(\lambda-b)(Z_0-b)(Z_0-\lambda)^{-1} \\
&=&(b-\lambda)(Z-b)^{-1}M(Z_0-\lambda)^{-1}
\end{eqnarray*}
and hence for $\lambda \in \rho(Z_0)=\Sigma_p^c$ we have by (p2)
\begin{eqnarray*}
    |D(\lambda)| &\leq& \exp\left( \mu_r^r \Gamma_r |\lambda-b|^r \|(Z-b)^{-1}\|^r \|M(Z_0-\lambda)^{-1}\|_{\mathcal I}^r\right).
 \end{eqnarray*}
Writing $b=\Psi_p(a)$ and $\lambda=\Psi_p(z)$, with $z \in \mc_+$, the assumption (\ref{eq:32}) and Lemma \ref{lem:62} hence imply that
\begin{equation}
|D(\Psi_p(z))| \leq \exp\left( \mu_r^r \Gamma_r |\Psi_p(z)-\Psi_p(a)|^r 2^ra^{-r\gamma} C_1^r \Re(z)^{-r\alpha} |z|^{-r\beta}\right).   \label{eq:41}  
\end{equation}
Here the holomorphic function $D \circ \Psi_p$ is defined on the right half-plane $\mc_+$. In the following, it will be necessary to transfer this function to the unit disk $\md$ using the conformal map  
$$ \Phi_a: \md \to \mc_+, \quad \Phi_a(w) = a \frac{1-w}{1+w}$$
with inverse 
$$ \Phi_a^{-1}(z)=\frac{a-z}{a+z}.$$
\begin{lemma}\label{lem63}
  Let $w \in \md, z=\Phi_a(w) \in \mc_+$ and $\lambda= \Psi_p(z) \in \Sigma_p^c$. Then the following holds:
  \begin{align}
   1+w = \frac{2a}{a+z}, &\quad 1-w=\frac{2z}{a+z}     \label{eq:13a}\\
    |\Psi_p(z)-\Psi_p(a)| &\leq \frac{4a(a+2\gamma_p)}{|1+w|^2}     \label{eq:13b}\\
    a \frac{1-|w|}{|1+w|^2} & \leq \Re(z)  \leq 2a \frac{1-|w|}{|1+w|^2}     \label{eq:13c}\\
   \frac{a\cdot \dist(\lambda,\Sigma_p)}{8|a+z|^2|1/4-\lambda|^{1/2}}  &\leq 1-|w| \leq \frac{16a\cdot\dist(\lambda,\Sigma_p)}{|a+z|^2|1/4-\lambda|^{1/2}}. \label{eq:42}
\end{align}
\end{lemma}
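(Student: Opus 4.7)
My plan is to work through the four assertions in order, with each one reducing to a short direct computation from the definition $w=\Phi_a^{-1}(z)=(a-z)/(a+z)$ together with the preceding parts and Lemma~\ref{lem:41}. I do not anticipate a genuine obstacle; the task is to carry out the constants carefully enough for the later Borichev--Golinskii--Kupin machinery.

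For (\ref{eq:13a}) I would simply substitute: $1+w=\tfrac{(a+z)+(a-z)}{a+z}=\tfrac{2a}{a+z}$ and $1-w=\tfrac{(a+z)-(a-z)}{a+z}=\tfrac{2z}{a+z}$. From these two identities I immediately read off that $a-z=\tfrac{2aw}{1+w}$ and $a+z=\tfrac{2a}{1+w}$. For (\ref{eq:13b}) I would then use the factorization
\[
\Psi_p(z)-\Psi_p(a)=(a+\gamma_p)^2-(z+\gamma_p)^2=(a-z)\bigl(a+z+2\gamma_p\bigr),
\]
so that $|\Psi_p(z)-\Psi_p(a)|=|a-z|\,|a+z+2\gamma_p|\leq \tfrac{2a|w|}{|1+w|}\cdot\bigl(\tfrac{2a}{|1+w|}+2\gamma_p\bigr)$. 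Bounding $|w|\le 1$ and using the elementary estimate $|1+w|\le 2$ to write $2\gamma_p\le \tfrac{4\gamma_p}{|1+w|}$ then gives $|\Psi_p(z)-\Psi_p(a)|\le \tfrac{2a}{|1+w|}\cdot\tfrac{2(a+2\gamma_p)}{|1+w|}=\tfrac{4a(a+2\gamma_p)}{|1+w|^2}$, as desired.

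For (\ref{eq:13c}) the cleanest route is the explicit formula
\[
\Re(z)=a\,\Re\!\frac{1-w}{1+w}=a\,\Re\!\frac{(1-w)(1+\overline w)}{|1+w|^2}=\frac{a(1-|w|^2)}{|1+w|^2}=\frac{a(1-|w|)(1+|w|)}{|1+w|^2},
\]
followed by the trivial $1\le 1+|w|\le 2$. Finally, for (\ref{eq:42}) I would combine (\ref{eq:13c}) with the observation that $z+\gamma_p=\sqrt{1/4-\lambda}$ (by (\ref{eq:14}) and the positive real-part branch convention), so $|z+\gamma_p|=|1/4-\lambda|^{1/2}$, and insert this into the two-sided estimate of Lemma~\ref{lem:41}(ii): $\tfrac{|1/4-\lambda|^{1/2}\Re(z)}{4}\le \dist(\lambda,\Sigma_p)\le 16\,|1/4-\lambda|^{1/2}\Re(z)$ (these constants also cover the case $p=2$ since Lemma~\ref{lem:41}(i) is strictly sharper). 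Solving for $\Re(z)$ and plugging into (\ref{eq:13c}), and replacing $a/|1+w|^2$ by $|a+z|^2/(4a)$ via the first identity of (\ref{eq:13a}), yields
\[
1-|w|\ \le\ \frac{|1+w|^2\Re(z)}{a}\ \le\ \frac{4a\Re(z)}{|a+z|^2}\ \le\ \frac{16a\,\dist(\lambda,\Sigma_p)}{|a+z|^2|1/4-\lambda|^{1/2}},
\]
and analogously the matching lower bound $\tfrac{a\,\dist(\lambda,\Sigma_p)}{8|a+z|^2|1/4-\lambda|^{1/2}}\le 1-|w|$. The only point to watch is the constant bookkeeping in this last step, since these factors feed directly into the Lieb--Thirring estimates; everything else is a routine rewriting.
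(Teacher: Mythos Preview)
Your proof is correct and follows essentially the same route as the paper: the identities in (\ref{eq:13a}) are immediate, (\ref{eq:13b}) comes from the factorization $(a+\gamma_p)^2-(z+\gamma_p)^2$ combined with $|1+w|\le 2$, (\ref{eq:13c}) from the identity $\Re(z)=a(1-|w|^2)/|1+w|^2$ together with $1\le 1+|w|\le 2$, and (\ref{eq:42}) from inserting Lemma~\ref{lem:41}(ii) and $|z+\gamma_p|=|1/4-\lambda|^{1/2}$ into (\ref{eq:13c}). The only cosmetic difference is that the paper keeps the expression $(1-|w|^2)$ intact through the computation of (\ref{eq:42}) and only splits off the factor $1+|w|$ at the very end, whereas you invoke (\ref{eq:13c}) directly; the constants agree either way.
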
  
\begin{proof}[Proof of the lemma]
The identities in (\ref{eq:13a}) are immediate consequences of the definitions. To see (\ref{eq:13b}) we compute, using (\ref{eq:13}),
\begin{eqnarray*}
|\Psi_p(z)-\Psi_p(a)| &=& | (a+\gamma_p)^2-(z+\gamma_p)^2| = |a^2-z^2 + 2\gamma_p(a-z)|.
\end{eqnarray*}
Hence, since 
$$ a-z = a\left(1-\frac{1-w}{1+w}\right) = \frac{2aw}{1+w}, \qquad a+z = \frac{2a}{1+w},$$
we obtain
\begin{eqnarray*}
|\Psi_p(z)-\Psi_p(a)| &=&  \left| \frac{4wa^2}{(1+w)^2} +\frac{4wa\gamma_p}{1+w} \right| \\
 &=& \frac{4|w|a}{|1+w|^2} |a + \gamma_p(1+w)| \leq \frac{4a(a+2\gamma_p)}{|1+w|^2}.
\end{eqnarray*}  
The estimates in (\ref{eq:13c}) follow from
\begin{equation}
\Re(z)= a \frac{1-|w|^2}{|1+w|^2}.\label{eq:43}
\end{equation}
Finally, in order to show (\ref{eq:42}) we first use Lemma \ref{lem:41} to obtain
\begin{equation}
       \frac{|z+\gamma_p| \Re(z)}{4} \leq \dist(\lambda,\Sigma_p) \leq 16 |z+\gamma_p| \Re(z).\label{eq:44}
 \end{equation}
(here we ignore the fact that a better estimate is valid if $p=2$). Since  $z+\gamma_p=\sqrt{1/4-\lambda}$ we obtain, also using (\ref{eq:13a}) and (\ref{eq:43}), that
\begin{equation}
  |z+\gamma_p| \Re(z) = \frac{a |1/4-\lambda|^{1/2}(1-|w|^2)}{|1+w|^2} = (1-|w|^2) \frac{|1/4-\lambda|^{1/2}|a+z|^2}{4a}.\label{eq:21b}
\end{equation}
But (\ref{eq:21b}) and (\ref{eq:44}) imply  (\ref{eq:42}).
\end{proof}
Now let us introduce the holomorphic function 
$$h: \md \to \mc, \quad h(w)= D(\Psi_p(\Phi_a(w))).$$ Then $h(w)= 0$ if and only if $\lambda= \Psi_p(\Phi_a(w)) \in \sigma_d(Z) \cap \Sigma_p^c$ (and order and multiplicity coincide) and $h(0)=D(\Psi_p(a))=D(b)=1$. Moreover, using the previous lemma and (\ref{eq:41}) a short computation shows that 
{\small
\begin{eqnarray*}
|h(w)| &\leq&  \exp\left( 8^r\mu_r^r \Gamma_rC_1^r a^{r(1-\alpha-\beta-\gamma)}(a+2\gamma_p)^r \frac{1}{|1+w|^{r(2-2\alpha-\beta)}|1-w|^{r\beta}(1-|w|)^{r\alpha}}\right).
\end{eqnarray*}
} 
So we see that $h$ grows at most exponentially for $w$ approaching the unit circle, with the rate of explosion depending on whether $w$ approaches $1$ or $-1$ or a generic point of the boundary, respectively. A theorem of Borichev, Golinskii and Kupin \cite[Theorem 0.3]{MR2481997} allows us to transform this information on the growth of $h$ into the following information on its zero set: The theorem says that for all $\tau > 0$ there exists a constant $C=C(\alpha,\beta,\gamma,r, \tau) > 0$ such that
\begin{eqnarray}
&&  \sum_{h(w)=0, w \in \md} {(1-|w|)^{r\alpha+1+\tau}|1-w|^{(r\beta-1+\tau)_+}|1+w|^{(r(2-2\alpha-\beta) -1 +\tau)_+}} \nonumber\\ &\leq& C \cdot C_1^r \cdot a^{r(1-\alpha-\beta-\gamma)}(a+2\gamma_p)^r,   \label{eq:45}
\end{eqnarray}
where each zero of $h$ is counted according to its order. Using Lemma \ref{lem63} we see that the summands on the lhs are bounded below by
\begin{eqnarray*} 
\left( \frac a 8 \frac{\dist(\lambda,\Sigma_p)}{|a+z|^2|1/4 -\lambda|^{1/2}}\right)^{r\alpha+1+\tau} \left| \frac{2z}{a+z} \right|^{(r\beta-1+\tau)_+}\left| \frac{2a}{a+z}\right|^{(r(2-2\alpha-\beta)-1+\tau)_+}. 
 \end{eqnarray*}
Hence we have proved the following lemma. 
\begin{lemma}
Assume (\ref{eq:32}) and (\ref{eq:33}). 
Let $\tau > 0$ and set 
\begin{eqnarray*}
  \delta_1&:=& r\alpha + 1+ \tau, \\
  \delta_2 &:=& (r\beta-1+\tau)_+, \\
  \delta_3 &:=& r(1-\alpha-\beta-\gamma),\\
  \delta_4 &:=& (r(2-2\alpha-\beta)-1+\tau)_+. 
\end{eqnarray*}
Then there exists $C=C(\alpha,\beta,\gamma,r,\tau)$ such that for all $a \geq (2C_1)^{1/(\alpha + \beta)}$ we have 
{\small
  \begin{equation}
   \sum_{\lambda \in \sigma_d(Z)\cap \Sigma_p^c} \left( \frac{\dist(\lambda,\Sigma_p)}{|\frac 1 4 -\lambda|^{1/2}}\right)^{\delta_1} \frac{|z|^{\delta_2}}{|a+z|^{2\delta_1+\delta_2+\delta_4}} \leq C \cdot C_1^r \cdot a^{-\delta_1-\delta_4+\delta_3}(a+2\gamma_p)^r.\label{eq:46}
    \end{equation}
}
\noindent Here each eigenvalue is counted according to its alg. multiplicity and $z=\Psi_p^{-1}(\lambda)$.
\end{lemma}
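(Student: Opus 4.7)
\medskip

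\noindent\textbf{Proof proposal.} The entire construction preceding the statement has already done the heavy lifting: we have produced a holomorphic function $h : \md \to \mc$ with $h(0)=1$ whose zeros (with multiplicities) are in bijection with $\sigma_d(Z) \cap \Sigma_p^c$ via $w \leftrightarrow \lambda = \Psi_p(\Phi_a(w))$, and we have applied the Borichev--Golinskii--Kupin theorem to obtain the estimate (\ref{eq:45}). My plan is therefore to transport (\ref{eq:45}) from the $w$-variable back to the spectral variable $\lambda$ using the identities collected in Lemma \ref{lem63}, and then to read off the claimed inequality (\ref{eq:46}) by bookkeeping the exponents of $a$.

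Concretely, I would fix $a \geq (2C_1)^{1/(\alpha+\beta)}$ (so that the construction is valid by Lemma \ref{lem:62}) and, for each zero $w$ of $h$, set $z = \Phi_a(w) \in \mc_+$ and $\lambda = \Psi_p(z) \in \Sigma_p^c$. From (\ref{eq:13a}) we get $|1-w| = 2|z|/|a+z|$ and $|1+w| = 2a/|a+z|$, while the lower bound in (\ref{eq:42}) yields
\begin{equation*}
1-|w| \ \geq \ \frac{a \cdot \dist(\lambda,\Sigma_p)}{8 |a+z|^2 |1/4-\lambda|^{1/2}}.
\end{equation*}
Substituting these three lower bounds into each factor on the left-hand side of (\ref{eq:45}), the summand is bounded from below by
\begin{equation*}
\left(\frac{a}{8}\right)^{\delta_1} 2^{\delta_2+\delta_4} \, a^{\delta_4} \, \left(\frac{\dist(\lambda,\Sigma_p)}{|1/4-\lambda|^{1/2}}\right)^{\delta_1} \frac{|z|^{\delta_2}}{|a+z|^{2\delta_1+\delta_2+\delta_4}}.
\end{equation*}

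Summing over all zeros of $h$ --- equivalently, over all $\lambda \in \sigma_d(Z) \cap \Sigma_p^c$ counted with algebraic multiplicity by properties (p3)--(p4) --- and dividing both sides of (\ref{eq:45}) by the constant prefactor $8^{-\delta_1} 2^{\delta_2+\delta_4} a^{\delta_1+\delta_4}$ yields
\begin{equation*}
\sum_{\lambda \in \sigma_d(Z)\cap\Sigma_p^c} \left(\frac{\dist(\lambda,\Sigma_p)}{|1/4-\lambda|^{1/2}}\right)^{\delta_1} \frac{|z|^{\delta_2}}{|a+z|^{2\delta_1+\delta_2+\delta_4}} \ \leq \ \widetilde{C} \cdot C_1^r \cdot a^{r(1-\alpha-\beta-\gamma) - \delta_1 - \delta_4} (a+2\gamma_p)^r,
\end{equation*}
where $\widetilde C = \widetilde C(\alpha,\beta,\gamma,r,\tau)$ absorbs the numerical constant from (\ref{eq:45}) together with $8^{\delta_1} 2^{-\delta_2-\delta_4}$. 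Recalling $\delta_3 = r(1-\alpha-\beta-\gamma)$ this is exactly (\ref{eq:46}).

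The only point that requires any care --- and I would expect it to be the sole obstacle --- is verifying that the various $(\cdot)_+$ truncations in the definitions of $\delta_2$ and $\delta_4$ are compatible with the substitution: the identities $|1 \pm w| \leq 2$ allow one to raise the exponents on $|1-w|$ and $|1+w|$ in (\ref{eq:45}) from $(r\beta-1+\tau)_+$ to $r\beta-1+\tau$ (and similarly for the other factor) only when the true exponent is nonnegative, so one must either treat the two cases separately or simply leave the exponents as $\delta_2,\delta_4$ throughout, which is what the statement does. Once this is done, the rearrangement of $a$-powers is purely algebraic and the proof concludes.
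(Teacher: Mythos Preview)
Your proposal is correct and follows exactly the paper's own argument: substitute the identities and the lower bound from Lemma \ref{lem63} into the summands of (\ref{eq:45}) and then divide through by the resulting $a$-dependent prefactor. The concern in your final paragraph is unnecessary, since the substitutions $|1-w|=2|z|/|a+z|$ and $|1+w|=2a/|a+z|$ from (\ref{eq:13a}) are exact identities, so leaving the exponents as $\delta_2,\delta_4$ throughout (as both you and the statement do) requires no case distinction at all.
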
 
In order to finish the proof of Theorem \ref{thm:61} we need to distinguish between 'small' and 'large' eigenvalues. Namely, introducing 
$$0< \eta:= (2C_1)^{1/(\alpha + \beta)}$$
we consider the cases
$$ \mbox{(i)} \quad |\lambda-1/(pp')|^{1/2} \leq \eta  \qquad \text{and} \qquad \mbox{(ii)} \quad  |\lambda-1/(pp')|^{1/2} > \eta,$$ 
respectively. Note that  by (\ref{eq:11})
\begin{equation}
 \lambda - 1/(pp')=-z(z+2\gamma_p).\label{eq:47}
\end{equation}

Case (i):  Since for $z \in \mc_+$ we have $|z+2\gamma_p| \geq |z|$ we obtain
$$ \eta \geq |\lambda-1/(pp')|^{1/2}= |z(z+2\gamma_p)|^{1/2} \geq |z|.$$
Now we apply (\ref{eq:46}) with $a=\eta$, the sum being restricted to those $\lambda$ satisfying the first case, and use the estimate $|z+\eta| \leq |z| + \eta \leq 2\eta$. We obtain
{\small
  \begin{eqnarray}
   \sum_{|\lambda-1/(pp')|^{1/2} \leq \eta} \left( \frac{\dist(\lambda,\Sigma_p)}{|\frac 1 4 -\lambda|^{1/2}}\right)^{\delta_1} |z|^{\delta_2} &\leq& C \cdot C_1^r \cdot \eta^{\delta_1+\delta_2+\delta_3}(\eta+\gamma_p)^{r} \nonumber \\
&\leq& C \cdot C_1^{r+\frac{\delta_1+\delta_2+\delta_3}{\alpha+\beta}}(C_1^{\frac 1 {\alpha + \beta}}+\gamma_p)^{r}. \label{as}
  \end{eqnarray}
}
  \begin{rem}
    Note that here the constants $C$ are different from each other and from the one in
    (\ref{eq:46}), but they depend on the same parameters. Also in the
    following this constant may change from line to line.
  \end{rem}
It remains to estimate the sum on the left-hand side of the previous inequality from below in a suitable manner. To this end we note that since $1/4-1/(pp')=\gamma_p^2$ we have

\begin{equation}
  |\lambda-1/4|^{1/2} \leq \left( |\lambda-1/(pp')|+ \gamma_p^2 \right)^{1/2} \leq |\lambda-1/(pp')|^{1/2}+ \gamma_p. \label{eq:48}
\end{equation}
Moreover, this estimate implies that, with $z=\Psi_p^{-1}(\lambda)=-\gamma_p + \sqrt{1/4-\lambda}$,
\begin{equation}
|z+2\gamma_p| \leq |z+\gamma_p| + \gamma_p = |1/4 - \lambda|^{1/2} + \gamma_p \leq |\lambda-1/(pp')|^{1/2} + 2 \gamma_p.\label{eq:49}
\end{equation}

Finally, the previous inequality and (\ref{eq:47}) show that
\begin{equation}
|z| = \frac{|\lambda-1/(pp')|}{|z+2\gamma_p|} \geq \frac{|\lambda-1/(pp')|}{|\lambda-1/(pp')|^{1/2} + 2 \gamma_p}.\label{eq:50}
\end{equation}
\begin{rem}
  It is important to note that (\ref{eq:48})-(\ref{eq:50}) are valid for all $\lambda \in \Sigma_p^c$.
\end{rem}
Now we can use (\ref{eq:50}) and (\ref{eq:48}) to estimate the sum in (\ref{as}) from below by
\begin{equation*}
   \sum_{|\lambda-1/(pp')|^{1/2} \leq (2C_1)^{1/(\alpha+\beta)}} \dist(\lambda,\Sigma_p)^{\delta_1} \frac{|\lambda-\frac 1 {pp'}|^{\delta_2}}{(|\lambda- \frac 1 {pp'}|^{1/2}+2\gamma_p)^{\delta_1+\delta_2}}.
 \end{equation*}
This completes the proof of inequality (\ref{eq:34}).

Case (ii): For those $\lambda$ satisfying the second case we have
$$ \eta < |\lambda-1/(pp')|^{1/2} = |z(z+2\gamma_p)|^{1/2} \leq |z+2\gamma_p|.$$
Now we restrict the sum in (\ref{eq:46}) to those $\lambda$ satisfying the second case, multiply left- and right-hand side of (\ref{eq:46}) by $a^{\delta_1+\delta_4-\delta_3}(a+2\gamma_p)^{-r-1-\tau}$ and integrate $a$ from $\eta$ to  $\infty$.

Then as a result for the rhs we obtain
\begin{eqnarray}
 C \cdot C_1^r \cdot \int_\eta^\infty da \: (a+2\gamma_p)^{-1-\tau}  
=  C \cdot C_1^r \cdot \frac 1 \tau (\eta+2\gamma_p)^{-\tau}. \label{Aw}
\end{eqnarray}
Moreover, for the lhs we obtain 
{\small
  \begin{eqnarray}
 && \int_{\eta}^\infty da \: a^{\delta_1+\delta_4-\delta_3}(a+2\gamma_p)^{-r-1-\tau}
\sum_{|\lambda-1/(pp')|^{1/2}>\eta} \left( \frac{\dist(\lambda,\Sigma_p)}{|\frac 1 4 -\lambda|^{1/2}}\right)^{\delta_1} \frac{|z|^{\delta_2}}{|a+z|^{2\delta_1+\delta_2+\delta_4}} \nonumber\\
&\geq& \sum_{|\lambda-1/(pp')|^{1/2}>\eta} \left( \frac{\dist(\lambda,\Sigma_p)}{|\frac 1 4 -\lambda|^{1/2}}\right)^{\delta_1} |z|^{\delta_2} \int_{\eta}^\infty da \frac{a^{\delta_1+\delta_4-\delta_3}} {(a+2\gamma_p)^{r+1+\tau}(a+|z|)^{2\delta_1+\delta_2+\delta_4}}. \label{qw}
 \end{eqnarray}
}
Now we change variables in the integral in (\ref{qw}), obtaining that 
{\small
\begin{eqnarray}
&&  \int_{\eta}^\infty da \frac{a^{\delta_1+\delta_4-\delta_3}} {(a+2\gamma_p)^{r+1+\tau}(a+|z|)^{2\delta_1+\delta_2+\delta_4}} \nonumber\\
&=& |z+2\gamma_p|^{\delta_1+\delta_4-\delta_3+1}  \int_{\eta/|z+2\gamma_p|}^\infty db \frac{b^{\delta_1+\delta_4-\delta_3}} {(b|z+2\gamma_p|+2\gamma_p)^{r+1+\tau}(b|z+2\gamma_p|+|z|)^{2\delta_1+\delta_2+\delta_4}} \nonumber\\ 
&\geq& |z+2\gamma_p|^{-\delta_1-\delta_2-\delta_3-r-\tau}  \int_{1}^\infty db \frac{b^{\delta_1+\delta_4-\delta_3}} {(b+1)^{r+1+\tau+2\delta_1+\delta_2+\delta_4}}, \label{AAv}
\end{eqnarray}
}
where in the last step we used that $|z| \leq |z+2\gamma_p|$ and $2\gamma_p \leq |z+2\gamma_p|$ for $z \in \mc_+$, and that $\eta < |z+2\gamma_p|$ as had been shown above. From (\ref{AAv}), (\ref{qw}) and (\ref{Aw}) we obtain that
{\small
 \begin{equation}
   \label{eq:51}
 \sum_{|\lambda-1/(pp')|^{1/2}>\eta} \left( \frac{\dist(\lambda,\Sigma_p)}{|\frac 1 4 -\lambda|^{1/2}}\right)^{\delta_1} \frac{|z|^{\delta_2}}{|z+2\gamma_p|^{\delta_1+\delta_2+\delta_3+r+\tau}}  \leq C \cdot C_1^{r}(C_1^{1/(\alpha+\beta)}+\gamma_p)^{-\tau}.
 \end{equation} 
}
Finally, we use (\ref{eq:48})-(\ref{eq:50}) to estimate the left-hand side of (\ref{eq:51}) from below by
{\small
 \begin{equation*}
 \sum_{|\lambda-1/(pp')|^{1/2}>(2C_1)^{1/(\alpha+\beta)}}  \frac{\dist(\lambda,\Sigma_p)^{\delta_1} |\lambda-1/(pp')|^{\delta_2}}{(|\lambda-1/(pp')|^{1/2}+2\gamma_p)^{2\delta_1+2\delta_2 + \delta_3+r+\tau}} 
 \end{equation*} 
}

This shows that also inequality (\ref{eq:35}) is valid and concludes the proof of Theorem \ref{thm:61}.
 
\section{Proof of Theorem \ref{thm:3} and \ref{thm:4}} \label{sec:7}

In this final section we use Theorem \ref{thm:61} to prove Theorem \ref{thm:3} and \ref{thm:4}, starting with the former. We set $H_0=-\Delta_p$ and $H_p=-\Delta_p+V$ acting in $L_p(\mh), 1 < p < \infty$.

\subsection{Proof of Theorem \ref{thm:3}}

Let $r \geq 2$. Since Theorem \ref{thm:3} is obviously true if $\|V\|_r=0$, we can assume that this is not the case. Now we apply Theorem \ref{thm:61} with the $l_r$-ideal $\mathcal S_r(L_2(\mh))$ (see Appendix \ref{ap:ideals} and Example \ref{ex:schatt}). By Corollary \ref{cor:est}  we have
$$ \|V(H_0-\Psi_2(z))^{-1}\|_{\mathcal S_r} \leq \|V\|_r \Re(z)^{-(1-1/r)}|z|^{-(1-2/r)}, \quad z \in \mc_+.$$
Moreover, Lemma \ref{lem:2} shows that for $a>0$
$$ \|(H_0-\Psi_2(a))^{-1}\|_{2,2} \leq a^{-2}.$$
Hence we can apply Theorem \ref{thm:61} with $Z_0=H_0, M=V, p=2$ and
$$ C_1 = \|V\|_r, \quad \alpha = 1-1/r, \quad \beta = 1-2/r, \quad \gamma=2,$$
and so $1/(\alpha+\beta)= r/(2r-3)$ and
\begin{eqnarray*}
  \delta_1 = r+\tau, \qquad \delta_2 = (r-3+\tau)_+, \qquad \delta_3 = 3-3r.
\end{eqnarray*}
Then (\ref{eq:34})  implies, using that $\gamma_2=0$, 
{\small
  \begin{eqnarray*}
\sum_{|\lambda-1/4|^{r-3/2} \leq (2\|V\|_r)^{r}} \frac{\dist(\lambda,[1/4,\infty))^{r+\tau}}{|\frac 1 4 -\lambda|^{\frac{r+\tau-(r-3+\tau)_+}{2}}} \leq C \cdot \|V\|_r^{\frac r {2r-3} ( r+\tau +(r-3+\tau)_+)}.
\end{eqnarray*}
}
In particular, if we restrict to $\tau \in (0,1)$ and consider the cases $2 \leq r \leq 3 -\tau$ and $r>3-\tau$ separately, the validity of Theorem \ref{thm:3}, part (ia) and (ib), is easily derived.

Similarly, (\ref{eq:35}) implies that 
\begin{eqnarray*}
\sum_{|\lambda-1/4|^{r-3/2} > (2\|V\|_r)^{r}} \frac{\dist(\lambda,[1/4,\infty))^{r+\tau}}{|\frac 1 4 -\lambda|^{(3+3\tau)/2}}
\leq C'  \|V\|_r^{\frac r {2r-3}(2r-3-\tau)}.
\end{eqnarray*}
This shows the validity of Theorem \ref{thm:3}, part (ii), and concludes the proof of the Theorem.

\subsection{Proof of Theorem \ref{thm:4}}

In view of Proposition \ref{prop:1} it is sufficient to prove the theorem in case $p>2$. Let $r \geq p > 2$ and $\|V\|_r \neq 0$ (otherwise the theorem is trivially satisfied). As remarked in Appendix \ref{ap:ideals}, Example \ref{ex:summ}, if $r \geq  p > 2$ the $(r,p)$-summing ideal $\Pi_{r,p}(L_p(\mh))$ is an $l_R$-ideal, where 
\begin{equation}
  \label{eq:52}
  R = r + \eps(r) \quad \text{ and } \quad \eps(r)= \left\{
    \begin{array}{cl}
      0, & \text{if } r=p \\
      \eps_0, & \text{if } r>p .
    \end{array}\right.
\end{equation} 
Here $\eps_0 > 0$ can be chosen arbitrarily small. By Corollary \ref{cor:est}  we have
$$ \|V(H_0-\Psi_p(z))^{-1}\|_{\Pi_{r,p}} \leq 2^{1-2/p}\|V\|_r \Re(z)^{-(1+2/p-3/r)}, \quad z \in \mc_+,$$
and Lemma \ref{lem:2} shows that for $a>0$
$$ \|(H_0-\Psi_p(a))^{-1}\|_{p,p} \leq a^{-2}.$$
Hence we can apply Theorem \ref{thm:61} with the $l_R$-ideal $\Pi_{r,p}, Z_0=H_0, M=V$ and 
$$ C_1 = 2^{1-2/p}\|V\|_r, \quad \alpha = 1+2/p-3/r, \quad \beta = 0, \quad \gamma = 2,$$
so that $1/(\alpha+\beta)= r/(r(1+2/p)-3)$ and for $\tau \in (0,1)$
\begin{eqnarray*}
  \delta_1 &=& R(1+2/p-3/r)+1+\tau \\
  &=& r(1+2/p)-2+\tau + \eps(r)(1+2/p-3/r), \\
  \delta_2 &=& (-1+\tau)_+=0, \\
 \delta_3 &=& R(1-(1+2/p-3/r)-2)=R(-2-2/p+3/r) \\
 &=& 3-r(2+2/p)-\eps(r)(2+2/p-3/r).
\end{eqnarray*}
Before applying (\ref{eq:34}) in the present situation, we note that for $|\lambda-\frac 1 {pp'}|^{\frac 1 2} \leq (2C_1)^{1/(\alpha+\beta)}$ we trivially have
$ (|\lambda- 1/(pp')|^{\frac 1 2}+2\gamma_p) \leq (2C_1)^{1/(\alpha+\beta)} + 2\gamma_p$
and hence (\ref{eq:34}) implies that 
\begin{eqnarray*}
&&     \sum_{|\lambda- \frac 1 {pp'}|^{\frac 1 2} \leq (2C_1)^{\frac 1 {\alpha+\beta}}}  \dist(\lambda,\Sigma_p)^{\delta_1} \cdot |\lambda-\frac 1 {pp'}|^{\delta_2} \\
&\leq& C \cdot C_1^{r+\frac{\delta_1+\delta_2+\delta_3}{\alpha+\beta}}(C_1^{\frac 1 {\alpha + \beta}}+\gamma_p)^{r+\delta_1+\delta_2}\\
&\leq& C \cdot C_1^{r}(C_1^{\frac 1 {\alpha + \beta}}+\gamma_p)^{r+2\delta_1+2\delta_2+\delta_3}.
\end{eqnarray*}

Inserting the parameters computed above, the previous estimate and a short computation shows that with 
\begin{equation*}
 \eps_1:= \tau+\eps(r)(1+2/p-3/r), \qquad \eps_2:= 2\tau +\eps(r)(2/p-3/r),
\end{equation*}
we have
{\small
\begin{eqnarray*}
  &&   \sum_{|\lambda- \frac 1 {pp'}|^{\frac 1 2} \leq (2\|V\|_r)^{r/(r(1+2/p)-3)}}  \dist(\lambda,\Sigma_p)^{r(1+2/p)-2+\eps_1} \\
&\leq& C \cdot \|V\|_r^{r} (\|V\|_r^{r/(r(1+2/p)-3)}+\gamma_p)^{r(2/p+1)-1+\eps_2}.
\end{eqnarray*}
}
Note that choosing $\eps(r)$ sufficiently small we can achieve that $0< \eps_1, \eps_2 < 4\tau$. Since 
\begin{equation}
k=r(2-2\gamma_p)-2=r(1+\frac 2 p)-2 \quad \text{if} \quad  p > 2,\label{eq:53}
\end{equation}
this concludes the proof of Theorem \ref{thm:4}, part (i).

Similarly, considering 'large' eigenvalues we first note that from (\ref{eq:35}) we obtain, using that here $\delta_2=0$,
\begin{eqnarray*}
   \sum_{|\lambda- \frac 1 {pp'}|^{\frac 1 2}>(2C_1)^{\frac 1 {\alpha+\beta}}}  \frac{\dist(\lambda,\Sigma_p)^{\delta_1} }{(|\lambda-\frac 1 {pp'}|^{\frac 1 2}+2\gamma_p)^{2\delta_1+ \delta_3+r+\tau}} 
\leq C' \cdot C_1^{r}(C_1^{\frac 1 {\alpha+\beta}}+\gamma_p)^{-\tau}.
\end{eqnarray*}
Inserting the parameters this shows that, with
$$ \eps_3:= 3\tau + \eps(r)(2/p-3/r)$$
and $k$ as in (\ref{eq:53}) we have
\begin{eqnarray*}
\sum_{|\lambda- \frac 1 {pp'}|^{\frac 1 2} > (2\|V\|_r)^{r/(k-1)}}  \frac{\dist(\lambda,\Sigma_p)^{k+\eps_1} }{(|\lambda-\frac 1 {pp'}|^{\frac 1 2}+2\gamma_p)^{k+1+\eps_3}} \\
\leq C' \cdot \|V\|_r^{r}(\|V\|_r^{r/(k-1)}+\gamma_p)^{-\tau}.
\end{eqnarray*}
Since we can choose $\eps(r)$ sufficiently small such that $0<\eps_1,\eps_3 < 4 \tau$, this shows the validity of part (ii) of Theorem \ref{thm:4} and concludes the proof of the theorem.

\appendix
\section*{Appendix}
\setcounter{section}{1} 
\subsection{Operators, spectra and perturbations}\label{ap:oper}

We introduce terminology and collect some standard results on operators and spectra. As references see, e.g., \cite{kato, b_Gohberg69, MR1130394}.  

(a) $X$ and $Y$ denote complex Banach spaces and $\bdd(X,Y)$ denotes the algebra of all bounded linear operators from $X$ to $Y$. As usual we set $\bdd(X):=\bdd(X,X)$. The spectrum of a closed operator $Z$ in $X$ will be denoted by $\sigma(Z)$ and $\rho(Z):=\mc \setminus \sigma(Z)$ denotes its resolvent set. An isolated eigenvalue $\lambda$ of $Z$ will be called discrete if its algebraic multiplicity  $m(\lambda):=\dim(\operatorname{Ran}(P_Z(\lambda))$ is finite. Here 
$$ P_Z(\lambda)=\frac 1 {2\pi i} \int_\gamma (\mu-Z)^{-1} d\mu$$
denotes the Riesz-Projection of $Z$ with respect to $\lambda$ (and $\gamma$ is a counterclockwise oriented circle centered at $\lambda$, with sufficiently small radius). The set of all discrete eigenvalues is called the discrete spectrum $\sigma_d(Z)$. The essential spectrum $\sigma_{ess}(Z)$ is defined as the set of all $\lambda \in \mc$, where $\lambda-Z$ is not a Fredholm operator. We have $\sigma_{ess}(Z) \cap \sigma_d(Z) =   \emptyset$ and if $\Omega \subset \mc \setminus \sigma_{ess}(Z)$ is a connected component and $\Omega \cap \rho(Z) \neq \emptyset$, then $\Omega \cap \sigma(Z) \subset \sigma_d(Z)$. Moreover, each point on the topological boundary of $\sigma(Z)$ either is a discrete eigenvalue or a point in the essential spectrum. The discrete eigenvalues of $Z$ can accumulate at the essential spectrum only. Finally, the spectral mapping theorem for the resolvent says that for $a \in \rho(Z)$ we have
$$ \sigma((Z-a)^{-1}) \setminus \{0\} = \{ (\lambda-a)^{-1}: \lambda \in \sigma(Z)\}.$$
A similar identity holds for the essential and the discrete spectra as well. In the latter case, the algebraic multiplicities of $\lambda \in \sigma_d(Z)$ and $(\lambda-a)^{-1} \in \sigma_d((Z-a)^{-1})$ coincide.

(b) In this paper the sum $Z+M$ of two closed operators $Z,M$ in $X$ will always denote the usual operator sum defined on $\dom(Z) \cap \dom(M)$ (and the product $ZM$ is defined on $\{ f \in \dom(M): Mf \in \dom(Z)\}$). The operator $M$ is called $Z$-compact if $\dom(Z)\subset \dom(M)$ and $M(Z-a)^{-1}$ is compact for one (hence all) $a \in \rho(Z)$. If this is the case the sum $Z+M$ is closed and for $a \in \rho(Z+M) \cap \rho(Z)$ also the resolvent difference
$$ (Z+M-a)^{-1} - (Z-a)^{-1} = -(Z+M-a)^{-1}M(Z-a)^{-1}$$
is compact. In particular, Weyl's theorem on the invariance of the essential spectrum under compact perturbations and the spectral mapping theorem imply that $\sigma_{ess}(Z)=\sigma_{ess}(Z+M)$.

(c) If $Z$ is closed and densely defined $Z^*$ denotes its adjoint (see \cite[Sections III.5.5 and III.6.6]{kato}). The spectrum $\sigma(Z^*)$ is the mirror image of $\sigma(Z)$ with respect to the real axis and $[(\lambda-Z)^{-1}]^*=(\overline \lambda - Z^*)^{-1}$. Moreover, $\lambda \in \sigma_d(Z)$ iff $\overline{\lambda} \in \sigma_d(Z^*)$ and the respective algebraic multiplicities coincide. Finally, we note that if $M$ is another operator in $X$ and $ZM$ is densely defined, then $(ZM)^* \supset M^*Z^*,$ with equality if $Z \in \bdd(X)$.

\subsection{$l_r$-ideals and perturbation determinants}\label{ap:ideals}

We recall some results concerning the construction of perturbation determinants on Banach spaces. The main reference is \cite{Hansmann15}, see also \cite{MR889455, MR917067}.

Let $X$ denote a complex Banach space and let $r>0$. A quasi normed subspace $(\mathcal I, \|.\|_{\mathcal I})$ of $\bdd(X)$ is called an \emph{$l_r$-ideal} (in $\bdd(X)$) with \emph{eigenvalue constant} $\mu_r>0$ if the following holds:
\begin{enumerate}
    \item[(1)] The finite rank operators, denoted by $ \mathcal F(X)$, are dense in $\mathcal I$.
  \item[(2)] $\|L\| \leq \|L\|_{\mathcal I}$ for all $L \in \mathcal I$.
  \item[(3)] If $L \in \mathcal I$ and $A,B \in \bdd(X)$, then $ALB \in \mathcal I$ and $$\|ALB\|_{\mathcal I} \leq \|A\| \|L\|_{\mathcal I} \|B\|.$$
 \item[(4)] For every $L \in \mathcal I$ one has $\|(\lambda_j(L))\|_{l_r} \leq \mu_r \|L\|_{\mathcal I}$. Here $(\lambda_j(L))$ denotes the sequence of discrete eigenvalues of $L$, counted according to their algebraic multiplicity (note that by (1) and (2) each $L \in \mathcal I$ is compact).
\end{enumerate}

In the present paper we will need only two particular $l_r$-ideals, which we introduce in the following two examples.
\begin{ex}\label{ex:schatt}
Let $\hil$ denote a complex Hilbert space and let $r>0$. The \emph{Schatten-von Neumann classes} $\mathcal S_r(\hil)$ are defined by
$$ \mathcal S_r(\hil) = \{ K \in \bdd(X) : K \text{ is compact and } (s_n(K)) \in l_r\}.$$
Here $(s_n(K))$ denotes the sequence of singular values of $K$. Equipped with the (quasi-) norm
$ \|K\|_{\mathcal S_r} := \|(s_n(K))\|_{l_r}$ this class is an $l_r$-ideal with eigenvalue constant $\mu_r=1$.
\end{ex}

\begin{ex} \label{ex:summ}
Let $1 \leq q \leq p < \infty$. An operator $L \in \bdd(X)$ is called \emph{$(p,q)$-summing} if there exists $\rho>0$ such that for all finite systems of elements $x_1,\ldots,x_n \in X$ one has
$$ \left( \sum_{k=1}^n \|Lx_k\|^p \right)^{1/p} \leq \rho \sup_{x' \in X', \|x'\|\leq 1} \left( \sum_{k=1}^n |x'(x_k)|^q\right)^{1/q}.$$
We denote the infimum of all such $\rho>0$ by $\|L\|_{\Pi_{p,q}}$ and the class of all such operators by $\Pi_{p,q}(X)$.  In the special case $p=q$ we speak of $p$-summing operators and write $\Pi_{p}(X)$. We note that for $1\leq q_1 \leq q_0 \leq p_0 \leq p_1 < \infty$ we have $\Pi_{p_0,q_0}(X) \subset \Pi_{p_1,q_1}(X)$ and 
\begin{equation}
\|L\|_{\Pi_{p_1,q_1}} \leq \|L\|_{\Pi_{p_0,q_0}}, \qquad L \in \Pi_{p_0,q_0}(X).\label{eq:54}
\end{equation}

Moreover, if $\hil$ is a complex Hilbert space, then for $r \geq 2$ we have $\Pi_{r,2}(\hil)=\mathcal S_r(\hil)$ and the corresponding norms coincide. Concerning the above properties of an $l_r$-ideal we note that $\Pi_{p,q}(X)$ always satisfies (2) and (3), and it satisfies (1) if $X'$ has the approximation property and is reflexive, see \cite[Remark 5.4]{Hansmann15}. For such $X$, we can use known information on the eigenvalue distribution of the $(p,q)$-summing operators to make the following statements:
 
(a) $(\Pi_p(X),\|.\|_{\Pi_p})$ is an $l_{\max(p,2)}$-ideal with eigenvalue constant $\mu_{\max(p,2)} = 1$. 
  
(b) If $p>2$, then the eigenvalues of $L \in \Pi_{p,2}(X)$ are in the weak space $l_{p,\infty}(\mn)$, see \cite{MR576647}. More precisely, if the eigenvalues are denoted decreasingly $|\lambda_1(L)| \geq |\lambda_2(L)| \geq \ldots$ (where each eigenvalue is counted according to its algebraic multiplicity and the sequence is extended by $0$ if there are only finitely many eigenvalues) then
$$ \sup_{j \in \mn} |\lambda_j(L)| j^{1/p} \leq 2e \|L\|_{\Pi_{p,2}}.$$
In particular, this implies that for $q>p$ and $n \in \mn$
$$ \sum_{j=1}^n |\lambda_j(L)|^q = \sum_{j=1}^n \left(|\lambda_j(L)| j^{1/p}\right)^q j^{-q/p} \leq \left( 2e \|L\|_{\Pi_{p,2}}\right)^q \sum_{j=1}^\infty j^{-q/p}.$$
Hence we see that  $\Pi_{p,2}(X)$ is an $l_q$-ideal for every $q>p>2$, with eigenvalue constant $\mu_q^q= (2e)^q \sum_{j=1}^\infty j^{-q/p}$. Moreover, by (\ref{eq:54}) we see that for $p > r \geq 2$ also $\Pi_{p,r}(X)$ is an $l_q$-ideal for every $q>p$, with the same constant $\mu_q$ as before. 
\end{ex}

The $l_r$-ideals can be used to construct perturbation determinants on Banach spaces: First, for a finite rank operator $F \in \mathcal F(X)$ and $r>0$ we define
\begin{equation}
  \label{eq:55}
  {\det}_r(I-F):= \prod_j \left( (1-\lambda_j(F))\exp\left( \sum_{k=1}^{\lceil r \rceil-1} \frac{\lambda_j^k(F)}{k} \right) \right).
\end{equation}
Now one can show that for every $l_r$-ideal $(\mathcal I, \|.\|_{\mathcal I})$ there exists a unique continuous function ${\det}_{r,\mathcal I}(I-.) : (\mathcal I,\|.\|_{\mathcal I}) \to \mc$ that coincides with $\det_r(I-.)$ on the finite rank operators $\mathcal F(X)$. Moreover, there exists $\Gamma_r > 0$ such that for all $L \in \mathcal I$ we have
$$ |{\det}_{r,\mathcal I}(I-L)| \leq \exp\left( \mu_r^r \Gamma_r \|L\|_{\mathcal I}^r\right),$$
where $\mu_r$ denotes the eigenvalue constant of $\mathcal I$. Finally, if $A \in \bdd(X)$ and $K \in \mathcal I$, we define the \emph{$r$-regularized perturbation determinant} $d$ of $A$ by $K$ (with respect to $\mathcal I$) as follows:
$$ d: \rho(A) \ni \lambda \mapsto {\det}_{r,\mathcal I}(I-K(\lambda-A)^{-1}).$$
Then the following holds: (i) $d$ is analytic on $\rho(A)$. (ii) $\lim_{|\lambda| \to \infty} d(\lambda)=1.$ (iii) For $\lambda \in \rho(A)$ we have 
$$ |d(\lambda)| \leq \exp\left( \mu_r^r \Gamma_r \|K(\lambda-A)^{-1}\|_{\mathcal I}^r\right).$$
(iv) $d(\lambda)=0$ iff $\lambda \in \sigma(A+K)$. (v) If $\lambda \in \rho(A) \cap \sigma_d(A+K)$, then its algebraic multiplicity as an eigenvalue of $A+K$ coincides with its order as a zero of $d$.
 
\subsection{Complex Interpolation}\label{ap:inter} We review some aspects of Calderon's method of complex interpolation, see \cite{MR0167830} or \cite{MR0482275}.

Let $ S := \{ z \in \mc : 0 \leq \Re(z) \leq 1\}$ and let $(X,Y)$ denote an interpolation couple of complex Banach spaces (i.e. $X$ and $Y$ are complex Banach spaces continuously embedded in a topological vector space $V$). Then $X \cap Y$ and $X+ Y$ become Banach spaces when equipped with the norms $\|z\|_{X \cap Y} = \max(\|z\|_X,\|z\|_Y)$ and $\|z\|_{X+Y}=\inf\{ \|x\|_X + \|y\|_Y : z = x+y, x \in X, y \in Y\}$, respectively. We denote by $\mathcal G(X,Y)$ the vector space of all functions $f: S \to X+Y$ which satisfy the following properties:
\begin{itemize}
    \item[-] $f$ is holomorphic in the interior of $S$,
    \item[-] $f \in C_b(S;X+Y)$, i.e. $f$ is continuous and bounded on $S$,
    \item[-] $t \mapsto f(it) \in C_b(\mr;X)$ and $t \mapsto f(1+it) \in C_b(\mr;Y)$.
\end{itemize}
Then $\mathcal G(X,Y)$ becomes a Banach space with the norm
$$ \|f\|_{\mathcal G(X,Y)}:= \max( \sup_{t \in \mr} \|f(it)\|_X, \sup_{t \in \mr} \|f(1+it)\|_Y).$$
For $0< \theta < 1$ the \emph{complex interpolation spaces} $[X,Y]_\theta$ are introduced as follows:
$$ [X,Y]_\theta = \{ f(\theta) : f \in \mathcal G(X,Y)\}, \quad \|z\|_{[X,Y]_\theta} = \inf_{f \in \mathcal G(X,Y), f(\theta)=z} \|f\|_{\mathcal G(X,Y)}.$$
One can show that 
$$ X \cap Y \subset [X,Y]_\theta \subset X+Y,$$
both embeddings being continuous. 
\begin{ex}
 For a $\sigma$-finite measure space $(M,\mu)$ and $p_0,p_1 \in [1,\infty]$ we have
$$ [L_{p_0}(M),L_{p_1}(M)]_\theta = L_p(M), \quad \text{where} \quad \frac 1 p = \frac {1-\theta} {p_0} + \frac \theta {p_1}.$$
Moreover, the corresponding norms coincide.
\end{ex}
\begin{prop}\label{prop:inter}
  Let $f \in \mathcal G(X,Y)$ and set 
$$A_0= \sup_{t \in \mr} \|f(it)\|_X \quad \text{and} \quad A_1=\sup_{t \in \mr} \|f(1+it)\|_Y.$$
Then $\|f(\theta)\|_{[X,Y]_\theta} \leq A_0^{1-\theta} A_1^\theta.$ 
\end{prop}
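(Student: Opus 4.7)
The plan is to produce, for $f \in \mathcal G(X,Y)$ with $A_0, A_1$ as in the statement, an explicit function $g \in \mathcal G(X,Y)$ with $g(\theta)=f(\theta)$ and $\|g\|_{\mathcal G(X,Y)} \leq A_0^{1-\theta}A_1^\theta$; by the definition of the interpolation norm this yields the claim immediately. The construction is the standard ``balancing exponential'' trick: assuming first that $A_0,A_1 > 0$, set
\begin{equation*}
g(z) := \left(\tfrac{A_0}{A_1}\right)^{z-\theta} f(z), \qquad z \in S.
\end{equation*}
Since the scalar factor $(A_0/A_1)^{z-\theta}=\exp((z-\theta)\log(A_0/A_1))$ is entire, bounded on $S$, and of modulus $1$ at $z=\theta$, the function $g$ inherits from $f$ the properties of being continuous and bounded on $S$ with values in $X+Y$, analytic in the interior, $X$-valued and continuous/bounded on $\Re z=0$, $Y$-valued and continuous/bounded on $\Re z=1$; hence $g \in \mathcal G(X,Y)$ and clearly $g(\theta)=f(\theta)$.

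The key point is the choice of exponent: on the left boundary $|(A_0/A_1)^{it-\theta}|=(A_0/A_1)^{-\theta}$, so
\begin{equation*}
\sup_{t \in \mr} \|g(it)\|_X \leq \left(\tfrac{A_0}{A_1}\right)^{-\theta} A_0 = A_0^{1-\theta} A_1^\theta,
\end{equation*}
and on the right boundary $|(A_0/A_1)^{(1+it)-\theta}|=(A_0/A_1)^{1-\theta}$, giving the matching bound
\begin{equation*}
\sup_{t \in \mr} \|g(1+it)\|_Y \leq \left(\tfrac{A_0}{A_1}\right)^{1-\theta} A_1 = A_0^{1-\theta} A_1^\theta.
\end{equation*}
Therefore $\|g\|_{\mathcal G(X,Y)} \leq A_0^{1-\theta}A_1^\theta$, which by the definition of $\|\cdot\|_{[X,Y]_\theta}$ gives the desired estimate.

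It remains to treat the degenerate case in which $A_0=0$ or $A_1=0$, where the exponential trick breaks down and the claim is that $f(\theta)=0$. This is handled by reducing to the scalar Hadamard three-lines lemma via duality: for any $\ell \in (X+Y)^*$ the function $\varphi_\ell(z):=\ell(f(z))$ is continuous and bounded on $S$ and holomorphic in its interior, with $\sup_t|\varphi_\ell(it)| \leq \|\ell\|\,A_0$ and $\sup_t|\varphi_\ell(1+it)|\leq \|\ell\|\,A_1$, whence the classical three-lines lemma gives $|\varphi_\ell(\theta)| \leq (\|\ell\|A_0)^{1-\theta}(\|\ell\|A_1)^\theta=0$. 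Since this holds for every $\ell$, the Hahn-Banach theorem forces $f(\theta)=0$, and the inequality $0 \leq 0$ is trivial. I do not expect any genuine obstacle here; the only mild subtlety is choosing the multiplier so that both boundary bounds equalize to $A_0^{1-\theta}A_1^\theta$, which is what fixes the exponent to $\log(A_0/A_1)$.
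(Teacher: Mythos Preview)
Your proof is correct and follows essentially the same route as the paper: the balancing multiplier $g(z)=(A_0/A_1)^{\,z-\theta}f(z)$ is exactly the paper's construction, with the same boundary computations. The only (minor) difference is in the degenerate case $A_0A_1=0$, where the paper replaces the vanishing constant by $\eps>0$ in the definition of $g$ and lets $\eps\to 0$, while you instead pair with $\ell\in(X+Y)^*$ and apply the scalar three-lines lemma; both arguments are standard and equally short.
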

\begin{proof}
 If $A_0, A_1 \neq 0$, the function $ g(w):= ({A_0}/{A_1})^{w-\theta} f(w), w \in S,$
is in $\mathcal G(X,Y)$ with $g(\theta)=f(\theta)$ and
$$ \sup_{t \in \mr} \|g(it)\|_X \leq A_0^{1-\theta} A_1^\theta \quad \text{and} \quad \sup_{t \in \mr} \|g(1+it)\|_Y \leq A_0^{1-\theta} A_1^\theta.$$
Hence $ \|f(\theta)\|_{[X,Y]_\theta} \leq \|g(\theta)\|_{\mathcal G(X,Y)} \leq A_0^{1-\theta} A_1^\theta.$ If one of $A_0,A_1$ vanishes, we can replace it by $\eps > 0$ in the definition of $g$ and then send $\eps \to 0$. If $A_0=A_1=0$, we can choose $g(w)=0$ to obtain the result.
\end{proof}
In this paper we will not need interpolation results for operators between abstract interpolation spaces. However, we will need the following more concrete result known as the \emph{Stein interpolation theorem} \cite{MR0082586} (see also \cite{MR0304972}).
\begin{rem}
  Let us recall that a \emph{simple function} on a measure space
  $(M,\mu)$ is a finite linear combination of characteristic functions
  of measurable sets of finite measure.
\end{rem}

In the following we denote the norm of $L_p(M)$ by $\|.\|_p$ and the operatornorm of $T:L_p \to L_q$ by $\|T\|_{p,q}$.

\begin{thm}\label{thm:stein}
Let $(M,\mu)$ and $(N,\nu)$ be $\sigma$-finite measure spaces and assume that for every $w \in S$, $T_w$ is a linear operator mapping the space of simple functions on $M$ into measurable functions on $N$. Moreover, suppose that for all simple functions $f: M \to \mc$ and $g : N \to \mc$, the product $T_wf \cdot g$ is integrable and that
$$ S \ni w \mapsto \int_N (T_wf)(x)g(x) \nu(dx)$$
is continuous and bounded on $S$ and holomorphic in the interior of $S$. Finally, suppose that for some $p_j,q_j \in [1,\infty], j=0,1,$ and $A_0,A_1 \geq 0$ we have
$$ \|T_{it}f\|_{{q_0}} \leq A_0 \|f\|_{{p_0}}, \quad \|T_{1+it}f\|_{{q_1}} \leq A_1 \|f\|_{{p_1}}$$
for all $t \in \mr$ and all simple functions $f: M \to \mc$. Then for each $\theta \in (0,1)$ and
$$ 1 / {p_\theta} = {(1-\theta)}/ {p_0} + {\theta} /{p_1}, \quad 1/ {q_\theta} =  {(1-\theta)}/ {q_0} + {\theta}/ {q_1},$$
the operator $T_\theta$ can be extended to a bounded operator in $\bdd(L_{p_\theta}(M),L_{q_\theta}(N))$ and
$$ \|T_\theta\|_{{p_\theta},{q_\theta}} \leq A_0^{1-\theta} A_1^\theta.$$
\end{thm}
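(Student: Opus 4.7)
The plan is to reduce Theorem~\ref{thm:stein} to the Hadamard three-lines lemma, which states that if a function $F$ is bounded and continuous on $\bar S$, holomorphic on its interior, and satisfies $|F(it)|\le M_0$ and $|F(1+it)|\le M_1$ for all $t\in\mr$, then $|F(\theta+it)|\le M_0^{1-\theta}M_1^\theta$ for every $\theta\in[0,1]$. The standard device is to test $T_\theta f$ against a simple function $g$ by means of an auxiliary scalar analytic function $F(w)$ on $\bar S$ whose value at $w=\theta$ encodes $\int T_\theta f\cdot g\,d\nu$.

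Concretely, I would fix simple $f:M\to\mc$ and $g:N\to\mc$ and, by homogeneity, normalise $\|f\|_{p_\theta}=\|g\|_{q_\theta'}=1$, where $q_\theta'$ is the conjugate exponent to $q_\theta$. For $w\in\bar S$ set
\begin{equation*}
\frac{1}{p(w)}=\frac{1-w}{p_0}+\frac{w}{p_1},\qquad \frac{1}{q'(w)}=\frac{1-w}{q_0'}+\frac{w}{q_1'},
\end{equation*}
(with the obvious modification if an endpoint exponent is infinite), and define the analytic families
\begin{equation*}
f_w(x):=|f(x)|^{p_\theta/p(w)}\operatorname{sgn}(f(x)),\qquad g_w(y):=|g(y)|^{q_\theta'/q'(w)}\operatorname{sgn}(g(y)).
\end{equation*}
Since $f,g$ are simple, each $f_w,g_w$ is a finite linear combination of fixed characteristic functions with entire, uniformly bounded coefficients; moreover $f_\theta=f$ and $g_\theta=g$. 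The auxiliary function is
\begin{equation*}
F(w):=\int_N (T_wf_w)(y)\,g_w(y)\,\nu(dy).
\end{equation*}

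Expanding $f_w$ and $g_w$, one sees that $F$ is a finite linear combination of scalar functions of the form $w\mapsto a(w)b(w)\int (T_w\chi_E)\chi_{F'}\,d\nu$, each of which is continuous and bounded on $\bar S$ and holomorphic in the interior by hypothesis. Hence $F$ is admissible for the three-lines lemma. On the boundary, H\"older's inequality combined with the endpoint bounds yields
\begin{equation*}
|F(it)|\le\|T_{it}f_{it}\|_{q_0}\|g_{it}\|_{q_0'}\le A_0\|f_{it}\|_{p_0}\|g_{it}\|_{q_0'}=A_0,
\end{equation*}
using that $\Re(1/p(it))=1/p_0$ forces $\|f_{it}\|_{p_0}^{p_0}=\int|f|^{p_\theta}\,d\mu=1$, and similarly $\|g_{it}\|_{q_0'}=1$; an analogous computation gives $|F(1+it)|\le A_1$. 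The three-lines lemma applied at $w=\theta$ then yields
\begin{equation*}
\left|\int_N T_\theta f\cdot g\,d\nu\right|=|F(\theta)|\le A_0^{1-\theta}A_1^\theta.
\end{equation*}
Taking the supremum over simple $g$ with $\|g\|_{q_\theta'}\le 1$ gives $\|T_\theta f\|_{q_\theta}\le A_0^{1-\theta}A_1^\theta$ for every simple $f$ with unit norm, and density of simple functions in $L_{p_\theta}(M)$ extends $T_\theta$ to the desired bounded operator.

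The main obstacle is verifying genuine admissibility of $F$ for the three-lines lemma: the factor $|f(x)|^{p_\theta/p(w)}$ has unit-modulus but rapidly oscillating imaginary exponent as $|\Im w|\to\infty$, and uniform control over the whole closed strip is precisely what forces the reduction to simple $f,g$. On the finitely many level sets of $f$ and $g$ the coefficients attached to the characteristic functions are entire and stay bounded on $\bar S$, collapsing the construction to a finite-dimensional analytic problem. The remaining technicalities, namely the extension to $L_{p_\theta}$ by density, duality when $q_\theta=\infty$, and endpoint exponents equal to $\infty$ in the $f_w$ and $g_w$ definitions, are routine once this admissibility has been secured.
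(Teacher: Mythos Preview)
Your proof is correct and follows the classical route to Stein's theorem via the three-lines lemma, with the standard device of replacing the simple functions $f,g$ by analytic families $f_w,g_w$ obtained by raising the moduli to the appropriate $w$-dependent powers. Note, however, that the paper does not actually prove Theorem~\ref{thm:stein}: it merely states the result and cites Stein's original article \cite{MR0082586} and \cite{MR0304972}, so there is no in-paper proof to compare against. Your argument is essentially the one found in those references.
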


 \bibliography{Bibliography} 
 \bibliographystyle{plain}

\end{document}